\title{Homological stability for configuration spaces of manifolds}
\author{Thomas Church}
\newcommand{\nc}{\newcommand}
\nc{\dmo}{\DeclareMathOperator}
\theoremstyle{plain}
\newtheorem{theorem}{Theorem}[section]
\newtheorem*{main}{Theorem 1}
\newtheorem*{thmtwo}{Corollary 2}
\newtheorem*{maincor}{Corollary 3}
\newtheorem*{cortwo}{Corollary 4}
\newtheorem*{thmvw}{Theorem 5}
\newtheorem*{propbetti}{Proposition \ref{prop:improvebetti}}
\nc\theoremmain{1}
\nc\theoremtwo{2}
\nc\corollarymain{3}
\nc\corollarytwo{4}
\nc\theoremvw{5}
\newtheorem{proposition}[theorem]{Proposition}
\newtheorem{lemma}[theorem]{Lemma}
\newtheorem*{theorem:tech}{Theorem \ref{theorem:pieri}}
\theoremstyle{definition}
\newtheorem*{definition:rs}{Definition \ref{def:repstability}}
\newtheorem{definition}[theorem]{Definition}
\newtheorem{criterion}[theorem]{Criterion}
\newtheorem{remark}[theorem]{Remark}
\newtheorem*{step1}{Step 1 (Stability of $E_2$ page)}
\newtheorem*{step2}{Step 2 (Monotonicity of $E_2$ page)}
\newtheorem*{step3}{Step 3 (Stability and monotonicity of $E_r$ page)}
\newtheorem*{step4}{Step 4 (Stability and monotonicity of $H^*(C_n(M))$)}
\newtheorem*{step5}{Step 5 (Stability of map $C_n(N)\to C_n(M)$)}
\nc{\Q}{\mathbb{Q}}
\nc{\R}{\mathbb{R}}
\nc{\Z}{\mathbb{Z}}
\nc{\C}{\mathbb{C}}
\nc{\M}{\mathcal{M}}
\nc{\V}{\mathcal{V}}
\nc{\N}{\mathbb{N}}
\nc{\cN}{\mathcal{N}}
\nc{\cS}{\mathcal{S}}
\nc\T{\mathcal{T}}
\nc\B{\mathcal{B}}
\dmo{\Hom}{Hom}
\dmo{\Ind}{Ind}
\dmo{\Stab}{Stab}
\dmo{\ColStab}{ColStab}
\dmo{\EStab}{EStab}
\dmo{\spn}{span}
\dmo{\sh}{sh}
\dmo{\im}{im}
\dmo{\Sym}{Sym}
\dmo{\GSym}{GSym}
\dmo{\supp}{supp}
\dmo{\even}{even}
\dmo{\odd}{odd}
\dmo{\Mod}{Mod}
\dmo{\Bij}{Bij}
\dmo{\coker}{coker}
\dmo{\ttop}{top}
\dmo{\GL}{GL}
\nc{\bwedge}{\textstyle{\bigwedge}}
\nc{\corr}{\longleftrightarrow}
\nc{\bi}{b_i}
\nc{\bj}{b_j}
\nc{\cell}{\,}
\nc{\capB}{B}
\nc{\capC}{C}
\renewcommand{\epsilon}{\varepsilon}
\nc{\coloneq}{\mathrel{\mathop:}\mkern-1.2mu=}
\nc{\para}[1]{\medskip\noindent\textbf{#1.}}
\begin{document}
\maketitle
\begin{abstract}
Let $C_n(M)$ be the configuration space of $n$ distinct ordered points in $M$. We prove that if $M$ is any connected orientable manifold (closed or open), the homology groups $H_i(C_n(M);\Q)$ are representation stable in the sense of [CF]. Applying this to the trivial representation, we obtain as a corollary that the unordered configuration space $B_n(M)$ satisfies classical homological stability: for each $i$, $H_i(B_n(M);\Q)\approx H_i(B_{n+1}(M);\Q)$ for $n>i$.  This improves on results of McDuff, Segal, and others for open manifolds. Applied to closed manifolds, this provides natural examples where rational homological stability holds even though integral homological stability fails.

To prove the main theorem, we introduce the notion of \emph{monotonicity} for a sequence of $S_n$--representations, which is of independent interest. 
Monotonicity provides a new mechanism for proving representation stability using spectral sequences. The key technical point in the main theorem is that certain sequences of induced representations are monotone.
\end{abstract}

\section{Introduction}
\para{Configuration spaces} For any space $X$, let \[C_n(X)\coloneq \left\{(x_1,\ldots,x_n)\in X^n\big|x_i\neq x_j\right\}\] be the configuration space of $n$ distinct ordered points in $X$. The action of the symmetric group $S_n$ on $X^n$ restricts to a free action on $C_n(X)$, and the quotient \[B_n(X)\coloneq  C_n(X)/S_n\] is the configuration space of $n$ distinct \emph{unordered} points in $X$. The first configuration spaces to be studied in depth were $C_n(\R^2)$ and $B_n(\R^2)$, aspherical spaces whose fundamental groups are the \emph{pure braid group} and the \emph{braid group} respectively. There are inclusions $B_n(\R^2)\to B_{n+1}(\R^2)$, defined for example by \[\{z_1,\ldots,z_n\}\mapsto \{z_1,\ldots,z_n,1+\sup\Re z_i\}.\] Arnol'd \cite{Ar} proved that $B_n(\R^2)$ satisfies \emph{integral homological stability} with respect to these maps, meaning that for each $i\geq 0$, the induced maps \[H_i(B_n(\R^2);\Z)\overset{\approx}{\longrightarrow}H_i(B_{n+1}(\R^2);\Z)\] are isomorphisms once $n$ is sufficiently large (depending on $i$).

\para{Previous work} There are a number of problems in generalizing this theorem to configuration spaces of other manifolds. First, Arnol'd's proof of this theorem hinges on an identification of $B_n(\R^2)$ with the space of monic squarefree degree $n$ complex polynomials, and no such identification is available in general. But the main problem is that for a general manifold $M$ there are no natural maps $B_n(M)\to B_{n+1}(M)$ to induce the desired isomorphisms on homology.

\para{Open manifolds} In the past this obstacle was avoided by restricting to open manifolds. If $M$ is the interior of a compact manifold with boundary, maps $B_n(M)\to B_{n+1}(M)$ can be defined by ``pushing the additional point in from infinity''.
Formally, fix a collar neighborhood $U$ of the boundary, and choose a homeomorphism, isotopic to the identity, between $M$ and the interior $M\setminus U$. This gives an identification $B_n(M)\simeq B_n(M\setminus U)$, and the map $B_n(M\setminus U)\to B_{n+1}(M)$ is defined by adding a fixed point lying in the collar $U$. For such manifolds $M$, McDuff \cite[Theorem 1.2]{McD} related $B_n(M)$ to a certain space of generalized vector fields to prove that $B_n(M)$ satisfies integral homological stability; taking an exhaustion by such manifolds, a direct limit argument extends the result to arbitrary open manifolds. Segal \cite[Appendix to \S5]{Se} proved the same result for open manifolds by an approach closer to Arnol'd's.

\para{Closed manifolds} However, the corresponding theorem is false for closed manifolds. The simplest example where homological stability fails is the 2--sphere, where we have (see e.g. \cite[Theorem 1.11]{Bi}): \[H_1(B_n(S^2);\Z)=\Z/(2n-2)\Z\]  Thus for all $n$ we have $H_1(B_n(S^2);\Z)\not\approx H_1(B_{n+1}(S^2);\Z)$. The argument for open manifolds does not apply in this case because the stabilization map $B_n(M)\to B_{n+1}(M)$ does not exist (indeed there are topological obstructions to the existence of such a map). Without the ability to stabilize by adding points, we cannot relate the $n$--point configuration space $B_n(M)$ with the $(n+1)$--point configuration space $B_{n+1}(M)$. 

\para{Our approach} In this paper, we analyze instead the \emph{ordered} configuration spaces $C_n(M)$. The key benefit of this approach is by working with ordered configuration spaces, we have a natural map $C_{n+1}(M)\to C_n(M)$ given by forgetting the last point. This induces maps $H^i(C_n(M))\to H^i(C_{n+1}(M))$ for any manifold $M$, whether open or closed. Note that forgetting the last point requires knowing which is the last point, so this map cannot be defined on $B_n(M)$.

The action of $S_n$ on $C_n(M)$ induces an action on the cohomology groups $H^i(C_n(M))$, and the transfer map for the finite cover $C_n(M)\to B_n(M)$ gives a rational isomorphism
\begin{equation}\label{eq:transfer}H^i(B_n(M);\Q)\approx H^i(C_n(M);\Q)^{S_n}\end{equation} between the cohomology of the quotient and the $S_n$--invariant part of the cohomology of the cover. Thus to understand the cohomology of $B_n(M)$, it suffices to understand the cohomology of $C_n(M)$ together with the action of $S_n$ on it. In fact, we will use the representation theory of $S_n$ to relate successive configuration spaces in a way that would be impossible without using the $S_n$ action.

\para{Representation stability} The maps $H^i(C_n(M))\to H^i(C_{n+1}(M))$ are never isomorphisms with any coefficients, even for $n\gg i$, so we cannot hope to prove homological stability for the ordered configuration spaces $C_n(M)$. However, in Church--Farb \cite{CF} we defined \emph{representation stability}, which lets us nevertheless give a meaningful answer to the question: ``What is the stable homology of the ordered configuration space $C_n(M)$?'' 

Representation stability is a notion which applies to a sequence of rational vector spaces $V_n$, each equipped with an action of the corresponding symmetric group $S_n$, with maps $\phi_n\colon V_n\to V_{n+1}$. We say that such a sequence is \emph{consistent} if $\phi_n$ respects the group actions: \[\qquad\qquad\qquad\qquad\qquad\phi_n(\sigma\cdot v)=\sigma\cdot \phi_n(v)\qquad\qquad\text{for any }\sigma\in S_n,\] where we identify $S_n$ as a subgroup of $S_{n+1}$ in the standard way.

There is a classical correspondence between irreducible representations of $S_n$ and partitions $\lambda$ of $n$ (see e.g. \cite[Theorem 4.3]{FH}). A \emph{partition} of $n$ is a sequence $\lambda=(\lambda_1\geq \cdots \geq \lambda_\ell>0)$ with $\lambda_1+\cdots+\lambda_\ell=n$. We write $|\lambda|=n$ or $\lambda\vdash n$, and say that $\ell=\ell(\lambda)$ is the \emph{length} of $\lambda$. We often write $\lambda=(\lambda_1,\ldots,\lambda_\ell)$. The corresponding irreducible representation is denoted $V_\lambda$.

There are many commonalities between the irreducible representations of $S_n$ for different $n$. For example, the standard representation of $S_n$ on $\Q^n/\Q$ is the irreducible representation $V_{(n-1,1)}$ for any $n>1$. Similarly, the representation $\bwedge^3 (\Q^n/\Q)$ is the irreducible representation $V_{(n-3,1,1,1)}$ for any $n>3$. To capture these commonalities, we establish some additional notation.
If $\lambda=(\lambda_1,\ldots,\lambda_\ell)\vdash k$ is any partition of a fixed number $k$, then for any $n\geq k+\lambda_1$ we define the partition $\lambda[n]\vdash n$ by \[\lambda[n]\coloneq (n-k,\lambda_1,\ldots,\lambda_\ell),\] and define $V(\lambda)_n$ to be the irreducible $S_n$--representation \[V(\lambda)_n\coloneq V_{\lambda[n]}.\] Note that the condition $n\geq k+\lambda_1$ guarantees that $\lambda[n]$ is actually a partition. Every irreducible representation of
$S_n$ is of the form $V(\lambda)_n$ for a unique partition
$\lambda$.
Given a representation $V_n$ of $S_n$, we write $c_\lambda(V_n)$ for the multiplicity of $V(\lambda)_n$ in $V_n$.
\begin{definition}[Representation stability]
\label{def:repstability}
  Let $\{V_n\}$ be a consistent sequence of finite-dimensional rational $S_n$--representations.  We
  say that the sequence $\{V_n\}$ is \emph{uniformly representation stable with stable range $n\geq N$} if for all $n\geq N$, each of the following conditions holds:
\begin{enumerate}[I.]
\item \textbf{Injectivity:} The maps $\phi_n\colon V_n\to V_{n+1}$ are injective.
\item \textbf{Surjectivity:} As an $S_{n+1}$--module, $V_{n+1}$ is spanned by $\phi_n(V_n)$.
\item \textbf{Uniform multiplicity stability:} For each partition $\lambda$, the multiplicity $0\leq c_{\lambda}(V_n)<\infty$ of the irreducible representation $V(\lambda)_n$ in $V_n$ is independent of
  $n$ for all $n\geq N$.
\end{enumerate}
\end{definition}
We say that $\{V_n\}$ is \emph{uniformly multiplicity stable with stable range $n\geq N$} if Condition III holds for all $n\geq N$.
The foundations of representation stability can be found in Church--Farb \cite{CF}, including basic properties, theorems, and applications.\pagebreak

\para{Monotonicity} The central innovation of this paper is the notion of \emph{monotonicity} for a sequence of $S_n$--representations.
\begin{definition}[Monotonicity]
\label{def:monotone}
A consistent sequence $\{V_n\}$ of $S_n$--representations with maps $\phi_n\colon V_n\to V_{n+1}$ is called \emph{monotone for $n\geq N$} if for any $n\geq N$ and for each subspace $W<V_n$ isomorphic to $V(\lambda)_n^{\oplus k}$, the $S_{n+1}$--span of $\phi_n(W)$ contains $V(\lambda)_{n+1}^{\oplus k}$ as a subrepresentation.
\end{definition}

\para{Cohomology of ordered configuration spaces} The main theorem of this paper is the following. We require our manifolds to be connected and orientable (it is easy to see that stability fails already at $H_0$ for disconnected manifolds). Furthermore, here and throughout the paper, we exclude 1--dimensional manifolds, whose configuration spaces are trivial to compute by hand. Finally, we assume that the manifold has finite-dimensional cohomology; in this paper, we say that $M$ is of \emph{finite type} if $H^*(M;\Q)$ is finite-dimensional. This includes all compact manifolds and the interiors of compact manifolds.
\begin{main}
For any connected orientable manifold $M$ of finite type and any $i\geq 0$, the cohomology groups $\{H^i(C_n(M);\Q)\}$ of the ordered configuration space $C_n(M)$ are monotone and uniformly representation stable, with stable range $n\geq 2i$ if $\dim M\geq 3$ and stable range $n\geq 4i$ if $\dim M=2$.
\end{main}

John Wiltshire--Gordon has recently used Theorem 4.1 of \cite{CF} to prove that a stable range of $n\geq 2i$ is not possible for $C_n(\R^2)$. Theorem~\theoremmain\ implies that the $i$--th cohomology $H^i(C_n(M);\Q)$ has a uniform description in terms of representations of $S_n$, with this description independent of $n$ once $n$ is large enough. This description can be thought of as the ``stable cohomology'' of $C_n(M)$, though technically it is not a vector space or a single object at all.

This theorem has concrete implications for which irreducible representations can occur in $H^i(C_n(M);\Q)$. Assuming $\dim M\geq 3$, Theorem~\theoremmain\ implies that $V_\mu$ can only occur in $H^i(C_n(M);\Q)$ if the partition $\mu\vdash n$ satisfies $2\mu_2+\mu_3+\cdots+\mu_\ell\leq 2i$. For large $n$ this is a very strong restriction, implying that the vast majority of irreducible $S_n$--representations do not occur in $H^i(C_n(M);\Q)$.

As a corollary of monotonicity, we immediately obtain a stability result for inclusions of configuration spaces. Any inclusion  $M\hookrightarrow N$ induces an inclusion $C_n(M)\hookrightarrow C_n(N)$, yielding a restriction map $H^i(C_n(N);\Q)\to H^i(C_n(M);\Q)$.
\begin{thmtwo}
For any inclusion $M\hookrightarrow N$ of manifolds as in Theorem~\theoremmain, both the kernel and the image of the induced map on cohomology $H^i(C_n(N);\Q)\to H^i(C_n(M);\Q)$ are uniformly representation stable.
\end{thmtwo}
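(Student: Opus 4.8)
The plan is to deduce everything from the monotonicity and uniform representation stability of $\{H^i(C_n(M);\Q)\}$ and $\{H^i(C_n(N);\Q)\}$ established in Theorem~\theoremmain, together with the naturality of the restriction maps. Write $r_n^M\colon H^i(C_n(N);\Q)\to H^i(C_n(M);\Q)$ for the restriction induced by $C_n(M)\hookrightarrow C_n(N)$, and let $\phi_n^M$, $\phi_n^N$ be the stabilization maps (induced by forgetting the last point) on the two configuration spaces. Since the inclusion $C_n(M)\hookrightarrow C_n(N)$ is compatible with forgetting the last point, we have a commuting square $\phi_n^M\circ r_n^M = r_{n+1}^M\circ\phi_n^N$, and all of $r_n^M,\phi_n^M,\phi_n^N$ are maps of $S_n$-representations. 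Thus $\{\ker r_n^M\}$ and $\{\im r_n^M\}$ are consistent sequences of $S_n$-representations: $\phi_n^N$ carries $\ker r_n^M$ into $\ker r_{n+1}^M$ because $r_{n+1}^M(\phi_n^N(v)) = \phi_n^M(r_n^M(v)) = 0$, and likewise $\phi_n^M$ carries $\im r_n^M$ into $\im r_{n+1}^M$.

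For the image, I would argue as follows. There is an exact sequence $0\to\ker r_n^M\to H^i(C_n(N))\to\im r_n^M\to 0$ of $S_n$-representations, so $c_\lambda(\im r_n^M) = c_\lambda(H^i(C_n(N))) - c_\lambda(\ker r_n^M)$ for every $\lambda$; hence uniform multiplicity stability for $\{\im r_n^M\}$ follows once we have it for $\{\ker r_n^M\}$, using that we already know it for $\{H^i(C_n(N))\}$. Similarly, surjectivity (Condition II) for $\{\im r_n^M\}$ is inherited from surjectivity for $\{H^i(C_n(N))\}$: if $H^i(C_{n+1}(N))$ is spanned as an $S_{n+1}$-module by $\phi_n^N(H^i(C_n(N)))$, then applying the surjection $r_{n+1}^M$ shows $\im r_{n+1}^M$ is spanned by $r_{n+1}^M\phi_n^N(H^i(C_n(N))) = \phi_n^M(\im r_n^M)$. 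Injectivity (Condition I) of $\phi_n^M$ on $\im r_n^M$ is automatic since $\phi_n^M$ is already injective on all of $H^i(C_n(M))$. So the real content is to handle the kernel.

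For the kernel, injectivity of $\phi_n^N$ restricted to $\ker r_n^M$ is again automatic from injectivity of $\phi_n^N$ on $H^i(C_n(N))$. The crux is Condition II (surjectivity) and Condition III (multiplicity stability) for $\{\ker r_n^M\}$, and this is where monotonicity enters and where I expect the main difficulty. The idea: fix $\lambda$ and suppose $V(\lambda)_n^{\oplus k}$ sits inside $\ker r_n^M$. By monotonicity of $\{H^i(C_n(N))\}$, the $S_{n+1}$-span $W$ of $\phi_n^N\big(V(\lambda)_n^{\oplus k}\big)$ contains $V(\lambda)_{n+1}^{\oplus k}$; and since $\phi_n^N(\ker r_n^M)\subseteq\ker r_{n+1}^M$ and $\ker r_{n+1}^M$ is $S_{n+1}$-invariant, in fact $W\subseteq\ker r_{n+1}^M$. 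Therefore $c_\lambda(\ker r_{n+1}^M)\geq c_\lambda(\ker r_n^M)$: the multiplicities are nondecreasing. On the other hand $c_\lambda(\ker r_n^M)\leq c_\lambda(H^i(C_n(N)))$, which is bounded (eventually constant) by Theorem~\theoremmain. A nondecreasing bounded sequence of nonnegative integers is eventually constant, giving Condition III for $\{\ker r_n^M\}$; one then needs to check the stabilization point is the same as (or controlled by) the stable range of Theorem~\theoremmain, which should follow by tracking the inequalities above. Condition II for $\{\ker r_n^M\}$ is the remaining obstacle: one wants $\phi_n^N(\ker r_n^M)$ to $S_{n+1}$-span $\ker r_{n+1}^M$ in the stable range. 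I would deduce this by combining the multiplicity equality $c_\lambda(\ker r_n^M) = c_\lambda(\ker r_{n+1}^M)$ just established with the containment $V(\lambda)_{n+1}^{\oplus c_\lambda(\ker r_n^M)}\subseteq\langle\phi_n^N(\ker r_n^M)\rangle_{S_{n+1}}\subseteq\ker r_{n+1}^M$ coming from monotonicity applied to each isotypic piece: matching multiplicities on both ends forces the span to be all of $\ker r_{n+1}^M$. The same matching-multiplicities argument, now applied to $\{\im r_n^M\}$ via the exact-sequence count, upgrades the surjectivity already noted into full uniform representation stability, completing the proof.
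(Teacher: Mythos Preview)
Your approach is essentially the paper's: the paper packages exactly this argument as Proposition~\ref{prop:monkerim} and then simply applies it to the map $H^i(C_n(N))\to H^i(C_n(M))$ using that both source and target are monotone and uniformly representation stable by Theorem~\theoremmain. Your write-up is in effect re-deriving that proposition.

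There is one genuine gap, however. You only use monotonicity of the \emph{domain} $\{H^i(C_n(N))\}$; you never invoke monotonicity of the \emph{codomain} $\{H^i(C_n(M))\}$. As a result, your argument for the kernel gives only that $c_\lambda(\ker r_n^M)$ is nondecreasing and bounded, hence eventually constant---but ``tracking the inequalities above'' does not pin down \emph{where} it stabilizes, contrary to your claim. A nondecreasing sequence bounded by a constant $C$ can take up to $C$ steps to stabilize, and $C=c_\lambda(H^i(C_n(N)))$ depends on $\lambda$. The paper's fix is to observe that $\{\im r_n^M\}$ is a subsequence of the monotone sequence $\{H^i(C_n(M))\}$, so $c_\lambda(\im r_n^M)$ is \emph{also} nondecreasing in the stable range; since
\[
c_\lambda(\ker r_n^M)+c_\lambda(\im r_n^M)=c_\lambda\big(H^i(C_n(N))\big)
\]
is constant there, both summands are forced to be constant immediately, with the same stable range as Theorem~\theoremmain. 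With that one extra input your argument goes through exactly as written, and the surjectivity step you outline via matching multiplicities is then correct for both kernel and image.
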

The corollary is most interesting when $M$ and $N$ have the same dimension. Indeed when their dimensions differ, the map $H^*(C_n(N);\Q)\to H^*(C_n(M);\Q)$ can be completely understood (see Remark~\ref{remark:diffdim}). The proof of the main theorem makes up Section~\ref{section:main}.\\

\para{Applications to unordered configuration spaces} For all $n$, the representation $V(0)_n$ denotes the trivial representation of $S_n$. Thus as a special case of Theorem~\theoremmain, we conclude that the multiplicity of the trivial representation in $H^i(C_n(M);\Q)$ is eventually constant. Applying the transfer isomorphism \eqref{eq:transfer}, we obtain rational homological stability for the unordered configuration spaces $B_n(M)$ in Corollary~\corollarymain. For this special case, we improve the stable range to $n>i$, with no restrictions on dimension, in Section~\ref{section:unordered}.

For closed manifolds, there is no continuous map from $B_n(M)$ to $B_{n+1}(M)$, or vice versa, which induces the desired isomorphism $H_i(B_n(M);\Q)\approx H_i(B_{n+1}(M);\Q)$ of Corollary~\corollarymain. However, there is a \emph{correspondence} (i.e.{} a multi-valued function) $\pi_n\colon B_{n+1}(M)\rightrightarrows B_n(M)$ sending a set of $n+1$ points to all possible $n$--element subsets, and this induces a well-defined map $(\pi_n)_*\colon H_i(B_{n+1}(M);\Q)\to H_i(B_n(M);\Q)$ on homology. As we will see in Section~\ref{section:unordered}, monotonicity for $\{H^i(C_n(M);\Q)\}$ is exactly what we need to conclude that $(\pi_n)_*$ is an isomorphism.
\begin{maincor}
For any connected orientable manifold $M$ of finite type, the unordered configuration spaces $B_n(M)$ satisfy rational homological stability: for each $i\geq 0$ and each $n>i$, the induced map $(\pi_n)_*\colon H_i(B_{n+1}(M);\Q)\overset{\approx}{\longrightarrow} H_i(B_n(M);\Q)$ is an isomorphism.
\end{maincor}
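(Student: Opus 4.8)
The plan is to deduce Corollary~\corollarymain\ from Theorem~\theoremmain\ by combining three ingredients: the transfer isomorphism \eqref{eq:transfer}, the fact that monotonicity plus representation stability controls the multiplicity of the trivial representation, and a geometric identification of the map induced by the correspondence $\pi_n$ with the relevant algebraic map. First I would set up the dictionary. By \eqref{eq:transfer}, dualizing, $H_i(B_n(M);\Q)\cong H_i(C_n(M);\Q)_{S_n}$, and since $S_n$ is finite and we work over $\Q$, coinvariants agree with invariants and both compute the multiplicity $c_{(0)}\big(H_i(C_n(M);\Q)\big)$ of the trivial representation. Theorem~\theoremmain\ says this multiplicity is constant for $n\geq 2i$ (or $n\geq 4i$), so $\dim H_i(B_n(M);\Q)$ is eventually constant; the content of the corollary is that the specific map $(\pi_n)_*$ realizes this stability, with the sharper range $n>i$.

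Next I would analyze the correspondence. The multi-valued map $\pi_n\colon B_{n+1}(M)\rightrightarrows B_n(M)$ fits into a diagram with apex the space of an ordered point plus an unordered $n$-set, i.e.\ essentially $C_{n+1}(M)/S_n$, with two projections: one forgetting the ordered (last) point, landing in $B_{n+1}(M)$ after quotienting, and one remembering the $n$-set, landing in $B_n(M)$. Unwinding this, the key observation is that on homology $(\pi_n)_*\colon H_i(B_{n+1}(M);\Q)\to H_i(B_n(M);\Q)$ is identified, under the transfer isomorphisms, with the $S_n$-coinvariants of the forgetful map $f_n\colon H_i(C_{n+1}(M);\Q)\to H_i(C_n(M);\Q)$ — more precisely, with the map on $(0)$-isotypic components induced by $f_n$, where on the source we use $S_{n+1}$-coinvariants and on the target $S_n$-coinvariants and the fact that $H_i(C_{n+1})_{S_{n+1}}$ maps to $H_i(C_{n+1})_{S_n}\to H_i(C_n)_{S_n}$. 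This geometric bookkeeping — making the correspondence, the covering transfers, and the forgetful map all commute — is where I expect the main obstacle to lie, since one must be careful about which copies of symmetric groups act where and check that the multi-valued $(\pi_n)_*$ really is the composite one writes down.

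With the dictionary in hand, the remaining step is purely representation-theoretic and is exactly what monotonicity is designed for. Dualize once more so that we are looking at the cohomological forgetful map $\phi_n\colon H^i(C_n(M);\Q)\to H^i(C_{n+1}(M);\Q)$, which Theorem~\theoremmain\ tells us is injective, has $S_{n+1}$-spanning image, and is monotone, for $n$ in the stable range. Restrict attention to the trivial isotypic component. Surjectivity of $(\pi_n)_*$ (equivalently, injectivity of the dual map on invariants) follows from Condition~II: since $\phi_n(H^i(C_n))$ spans $H^i(C_{n+1})$ as an $S_{n+1}$-module, the induced map on $S_{n+1}$-invariants $H^i(C_n)^{S_n}\hookrightarrow H^i(C_n)\to H^i(C_{n+1})^{S_{n+1}}$ is surjective. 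For injectivity of $(\pi_n)_*$ (equivalently, surjectivity of the dual on invariants), one uses monotonicity: the trivial subrepresentation $W<H^i(C_n(M);\Q)^{S_n}$ of dimension $k=c_{(0)}$ has the property that the $S_{n+1}$-span of $\phi_n(W)$ contains $V(0)_{n+1}^{\oplus k}$, i.e.\ contains a copy of the trivial representation of dimension $k$; since by stability $c_{(0)}(H^i(C_{n+1}))=k$ as well, this forces the composite $W\to H^i(C_{n+1})\to H^i(C_{n+1})^{S_{n+1}}$ (projection onto invariants) to be an isomorphism onto the full invariant subspace, and dualizing gives injectivity of $(\pi_n)_*$.

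Finally, to obtain the sharp range $n>i$ rather than $n\geq 2i$, one cannot simply invoke Theorem~\theoremmain\ as a black box; this is the point of the separate treatment promised in Section~\ref{section:unordered}. The idea I would pursue is that the argument above only requires the conclusions of Theorem~\theoremmain\ \emph{restricted to the trivial isotypic component}, and that this restricted statement — stability and the relevant monotonicity for the $S_n$-invariants $H^i(C_n(M);\Q)^{S_n}\cong H^i(B_n(M);\Q)$ alone — can be established in the wider range $n>i$ by a more direct spectral sequence analysis (e.g.\ via the Leray spectral sequence of $C_n(M)\to M^n$ or a suitable Fadell–Neuwirth-type fibration), bypassing the harder full-strength monotonicity of induced representations that costs the factor of $2$. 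I would organize Section~\ref{section:unordered} so that the geometric identification of $(\pi_n)_*$ in the second paragraph above is stated once in general, and then fed either the range-$2i$ input from Theorem~\theoremmain\ or the range-$i$ input from the direct argument.
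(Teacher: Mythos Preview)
Your proposal is essentially the paper's own argument: identify $(\pi_n)^*$ with the composite $H^i(C_n)^{S_n}\hookrightarrow H^i(C_{n+1})\twoheadrightarrow H^i(C_{n+1})^{S_{n+1}}$ via the correspondence diagram, then use monotonicity together with stability of $c_{(0)}$ to conclude this is an isomorphism, and finally sharpen the range to $n>i$ by running the spectral‐sequence argument for the trivial isotypic component alone (this is exactly Proposition~\ref{prop:monsingle} combined with Proposition~\ref{prop:inductivestep} with $m=1$, $\ell=1$).

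One small correction: in your third paragraph you have the labels reversed. Your Condition~II argument actually proves \emph{surjectivity} of the dual map on invariants (hence injectivity of $(\pi_n)_*$), not the other way around; and the paper extracts \emph{injectivity} of the composite on invariants directly from monotonicity (applied to a one‐dimensional trivial $W$), then gets the isomorphism from equality of dimensions. Your monotonicity argument already yields both directions, so the appeal to Condition~II is redundant; with the labels fixed the logic goes through.
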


\begin{cortwo}
For any inclusion $M\hookrightarrow N$ of such manifolds, the induced map on the homology of the unordered configuration spaces is stable: for each $i\geq 0$, the rank of the induced map $H_i(B_n(M);\Q)\to H_i(B_{n}(N);\Q)$ is constant for all $n>i$.
\end{cortwo}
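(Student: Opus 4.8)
The plan is to read this off from Corollary~\corollarymain\ together with the naturality of the correspondence maps $\pi_n$ in the manifold. Since $M$ and $N$ are of finite type, all the homology groups involved are finite--dimensional, so the rank of $H_i(B_n(M);\Q)\to H_i(B_n(N);\Q)$ agrees with the rank of the dual map; in particular it is a single integer, and it suffices to show it is constant for $n>i$ (or, with less work, that it is eventually constant). The point is that Corollary~\corollarymain\ makes both $\{H_i(B_n(M);\Q)\}$ and $\{H_i(B_n(N);\Q)\}$ genuinely stable for $n>i$ via the maps $(\pi_n)_*$, and if these maps are compatible with the inclusion $M\hookrightarrow N$ then the rank in question cannot change as $n$ passes $i$.

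First I would verify that the square
\[
\xymatrix{
H_i(B_{n+1}(M);\Q)\ar[r]\ar[d]_{(\pi_n)_*} & H_i(B_{n+1}(N);\Q)\ar[d]^{(\pi_n)_*}\\
H_i(B_n(M);\Q)\ar[r] & H_i(B_n(N);\Q)
}
\]
commutes, where the horizontal maps are induced by $M\hookrightarrow N$. Recall that $(\pi_n)_*$ is built from the span $B_{n+1}(X)\xleftarrow{\,p\,}\widetilde B_{n+1}(X)\xrightarrow{\,q\,}B_n(X)$, where $\widetilde B_{n+1}(X)=\{(S,x)\mid S\in B_{n+1}(X),\ x\in S\}$, $p$ is the $(n+1)$--sheeted cover forgetting $x$, and $q(S,x)=S\smallsetminus\{x\}$; concretely $(\pi_n)_*=q_*\circ p^{!}$ with $p^{!}$ the transfer of the cover $p$. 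An inclusion $M\hookrightarrow N$ induces a map $\widetilde B_{n+1}(M)\to\widetilde B_{n+1}(N)$ which sits in a pullback square of finite covers over $B_{n+1}(M)\to B_{n+1}(N)$, so it commutes with the transfers $p^{!}$; it visibly commutes with the maps $q$ as well; chasing these identities gives commutativity of the displayed square.

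Granting this, the corollary follows at once: for every $n>i$ the two vertical maps are isomorphisms by Corollary~\corollarymain, so
\[
\operatorname{rank}\!\big(H_i(B_{n+1}(M);\Q)\to H_i(B_{n+1}(N);\Q)\big)=\operatorname{rank}\!\big(H_i(B_n(M);\Q)\to H_i(B_n(N);\Q)\big),
\]
and induction on $n$ gives the claimed constancy for all $n>i$. If one is content with the weaker statement that the rank is eventually constant, one can bypass the naturality of $\pi_n$ and instead invoke Corollary~\theoremtwo: the transfer isomorphism $H^i(B_n(X);\Q)\cong H^i(C_n(X);\Q)^{S_n}$ is natural in $X$, and since $(-)^{S_n}$ is exact over $\Q$, the rank of $H_i(B_n(M);\Q)\to H_i(B_n(N);\Q)$ equals $\dim(\im f_n)^{S_n}=c_0(\im f_n)$ for $f_n\colon H^i(C_n(N);\Q)\to H^i(C_n(M);\Q)$; Corollary~\theoremtwo\ says $\{\im f_n\}$ is uniformly representation stable, so $c_0(\im f_n)$ stabilizes.

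The only step that is not completely formal is the naturality used above, namely that $\pi_n$ is functorial in the manifold. This is a standard property of transfer maps for pullback squares of finite--sheeted covers, so I do not anticipate a real obstacle; the rest is a direct assembly of the two corollaries already established.
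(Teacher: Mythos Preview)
Your argument is correct. The paper's own treatment of Corollary~\corollarytwo\ is extremely brief: at the start of Section~\ref{section:unordered} it simply observes that, by the transfer isomorphism, the rank in question equals the multiplicity $c_0(\im\iota^*)$ of the trivial representation in $\im\big(\iota^*\colon H^i(C_n(N))\to H^i(C_n(M))\big)$, and that Corollary~\theoremtwo\ (together with the single-$\lambda$ refinement of Proposition~\ref{prop:monsingle} and the improved $n>i$ stable range for the trivial representation established in that section) forces this multiplicity to be constant. This is exactly your second, ``alternative'' argument.

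Your primary argument is a genuinely different route: rather than going through the ordered configuration spaces and representation stability of $\im\iota^*$, you stay entirely on the level of unordered configuration spaces, using only that $(\pi_n)_*$ is an isomorphism for $n>i$ (Corollary~\corollarymain) and that the correspondence $\pi_n$ is natural in the manifold. This is cleaner in that it reads the sharp stable range $n>i$ directly off of Corollary~\corollarymain, without having to revisit the spectral-sequence analysis or invoke Proposition~\ref{prop:monsingle}. The paper's approach, on the other hand, has the advantage that it simultaneously yields stability for \emph{every} $c_\lambda(\im\iota^*)$, not just the trivial one, so the rank statement for $B_n$ falls out as a special case of something stronger.

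The naturality of the transfer/correspondence in pullback squares of finite covers that you use is indeed standard, so there is no gap there.
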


One of the most interesting aspects of Corollary~\corollarymain\ is that 
rational homological stability holds even in cases when the integral homology is \emph{not} stable (we saw this above for the configuration space $B_n(S^2)$ of the 2--sphere). The only other example I am aware of where this phenomenon arises naturally is in the twisted cohomology of closed mapping class groups. Looijenga \cite[Theorem 1.1]{Lo} proved that the twisted rational homology groups $H_i(\Mod(S_g);H_1(S_g;\Q))$ stabilize. However, Morita \cite[Corollary 5.4]{Mo} computed that for closed surfaces we have $H_1(\Mod(S_g);H_1(S_g;\Z))\approx \Z/(2-2g)\Z$, so integral stability fails.

\para{Bounds on the stable range} Even for open manifolds, where integral stability holds, the stable range in Corollary~\corollarymain\ of $n>i$ for rational homology is a significant improvement on the best-known bounds for integral stability: for general open manifolds a stable range of $n\geq 2i+2$ was established by Segal \cite[Proposition A.1]{Se}, and this was improved to $n\geq 2i$ for punctured surfaces by Napolitano \cite[Theorem 1]{Na}. The optimal range for integral stability on open manifolds is not known, but a stable range of $n>i$ is not possible in general: for example, from Napolitano's calculations for the punctured torus we have that $H_4(B_5(T^2-\ast);\Z)\not\approx H_4(B_6(T^2-\ast);\Z)$ \cite[Table 3]{Na}.

The stable range of $n>i$ in Corollary~\corollarymain\ is sharp for rational stability. For example, for the 2--torus $T^2$, combining the calculations of Napolitano \cite[Table 2]{Na} with Corollary~\corollarymain\ gives: \begin{itemize}
\item $H_2(B_2(T^2);\Q)=\Q$\ \ while $H_2(B_n(T^2);\Q)=\Q^3$ for $n\geq 3$, \item $H_3(B_3(T^2);\Q)=\Q^4$ while $H_3(B_n(T^2);\Q)=\Q^5$ for $n\geq 4$, \item 
$H_4(B_4(T^2);\Q)=\Q^4$ while $H_4(B_n(T^2);\Q)=\Q^7$ for $n\geq 5$.
\end{itemize}

Corollaries~\corollarymain\ and \corollarytwo, together with the improved stable range, are proved in Section~\ref{section:unordered}. 
We would like to point out that since this paper was first distributed, Randal-Williams \cite[personal communication]{RW} has given another proof of Corollary~\corollarymain, with a stable range of $n\geq 2i+2$ in general.
Finally, the stable range of $n\geq i+1$ in Corollary~\corollarymain\ can be improved if the low-dimensional Betti numbers of $M$ vanish.
\begin{propbetti}
If $b_1(M)=\cdots=b_{k-1}(M)=0$ for some $k<\dim M$, then the stable range for $\{H^i(B_n(M);\Q)\}$ can be improved to $n\geq \frac{i}{k}+1$.
\end{propbetti}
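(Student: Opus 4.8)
The plan is to revisit the proof of Corollary~\corollarymain\ from Section~\ref{section:unordered} and observe that the hypothesis $b_1(M)=\cdots=b_{k-1}(M)=0$ improves a single numerical input by a factor of $k$. By the transfer isomorphism~\eqref{eq:transfer} we have $H^i(B_n(M);\Q)\cong H^i(C_n(M);\Q)^{S_n}$, and since passing to $S_n$--invariants is exact on rational $S_n$--representations it commutes with each page of the Leray (Cohen--Taylor--Kriz) spectral sequence of the inclusion $C_n(M)\hookrightarrow M^n$ used in Section~\ref{section:main}. Recall that the $E_2$--page of that spectral sequence is generated, as a graded-commutative algebra, by the pullbacks of $H^{>0}(M)$ under the $n$ coordinate projections $M^n\to M$ together with the interaction classes $G_{ab}$ $(1\le a<b\le n)$ of total degree $\dim M-1$, modulo the Arnol'd relations, $G_{ab}^2=0$, and $(\alpha^{(a)}-\alpha^{(b)})G_{ab}=0$. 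The proof of Corollary~\corollarymain\ combines the monotonicity of $\{H^i(C_n(M);\Q)\}$ supplied by Theorem~\theoremmain\ with the observation that a class of cohomological degree $i$ in this algebra is a product of at most $i$ generators, so that (once the interaction classes $G_{ab}$ are disposed of as in the last paragraph below) the $S_n$--invariant classes surviving to the abutment ``involve'' at most $i$ of the $n$ points; this is what forces $(\pi_n)_*$ to be an isomorphism once $n>i$.

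The improvement is then immediate in outline. Since $b_1(M)=\cdots=b_{k-1}(M)=0$ gives $H^j(M;\Q)=0$ for $0<j<k$, every generator of the $E_2$--algebra coming from $H^{>0}(M)$ has degree at least $k$; and since $k<\dim M$, each interaction class $G_{ab}$ has degree $\dim M-1\ge k$ as well. Hence a class of degree $i$ is a product of at most $i/k$ generators, so --- modulo the subtlety about the classes $G_{ab}$ discussed below --- the surviving $S_n$--invariant classes of degree $i$ involve at most $\tfrac ik$ of the points, and running the argument of Section~\ref{section:unordered} with this sharper input (the appeal to monotonicity, which yields surjectivity of $(\pi_n)_*$, being unaffected) replaces the range $n>i$ by $n\ge\tfrac ik+1$. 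When $k=1$ the hypothesis is vacuous --- recall $\dim M\ge 2$ always --- and this recovers Corollary~\corollarymain.

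The step I expect to be the real content is the treatment of the interaction classes $G_{ab}$: although each has degree $\dim M-1\ge k$, it involves \emph{two} of the $n$ points, so a crude count bounds the support of a degree-$i$ monomial only by $2i/k$ rather than $i/k$. As already happens in the proof of the range $n>i$ for Corollary~\corollarymain, this causes no loss, because products of several $G_{ab}$'s spanning disjoint pairs of points contribute nothing new to the invariant cohomology: when $\dim M$ is even the classes $G_{ab}$ have odd degree, so such a product is anti-invariant under the permutation exchanging two of the pairs and hence symmetrizes to zero; when $\dim M$ is odd, its symmetrization is carried by the spectral--sequence differential $G_{ab}\mapsto[\Delta_{ab}]\in H^{\dim M}(M^n)$ to a sum of invariant classes of strictly smaller support. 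Once one verifies --- just as for $n>i$, the Betti hypothesis affecting only the minimal degree of a generator and not this structural mechanism --- that every surviving $S_n$--invariant class of degree $i$ is represented by a symmetrization of monomials involving at most $\tfrac ik$ of the points, both the stability of $\{H^i(B_n(M);\Q)\}$ and the isomorphism property of $(\pi_n)_*$ in the range $n\ge\tfrac ik+1$ follow by the method of Section~\ref{section:unordered}.
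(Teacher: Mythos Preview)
Your proposal has a genuine gap in the ``real content'' step you yourself flag: the treatment of the interaction classes $G_{ab}$ is incorrect in both parity cases.

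For $\dim M$ even, your claim that ``products of several $G_{ab}$'s spanning disjoint pairs of points contribute nothing new to the invariant cohomology'' is false. Your antisymmetry argument shows only that a \emph{pure} product $G_{a_1b_1}\cdots G_{a_qb_q}$ symmetrizes to zero; but such products multiplied by classes from $H^*(M^n)$ can and do survive. In the paper's notation (with $\mu=(1,\ldots,1)\vdash q$), the $S_n$--invariants of the relevant summand are computed explicitly as $(H^r(M^q))^\epsilon\otimes (H^{p-r}(M^{n-2q}))^{S_{n-2q}}$, and the first factor is nonzero in general (e.g.\ it contains $\bwedge^1 H^0(M)\otimes\bwedge^{q-1}H^k(M)$). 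What actually makes the bound work is the vanishing $(H^r(M^q))^\epsilon=0$ for $r<kq-k$, which forces the second factor to have degree at most $p-kq+k$ and hence support at most $\lfloor p/k\rfloor-q+1$; combined with the $2q$ points carrying the $G$--factors this gives the range $n\ge\lfloor p/k\rfloor+q+1\le i/k+1$. None of this is in your sketch.

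You also do not address products of $G_{ab}$'s with \emph{overlapping} indices, corresponding to partitions with a block of size $m\ge 3$. The paper disposes of these via the representation-theoretic fact (Stanley, Lehrer--Solomon) that $H^{(m-1)(d-1)}(C_m(\R^d))$ contains no trivial $S_m$--subrepresentation for $m\ge 3$ when $d$ is even, and for all $m\ge 2$ when $d$ is odd. In the odd-dimensional case this last fact makes the invariant spectral sequence degenerate at $E_2$, so your proposed differential argument is both unnecessary and, as stated, incomplete (the differential moves classes to a different bidegree rather than eliminating them, and you would need an induction you have not set up).
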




\para{Other configuration spaces} The main theorem also implies stability for certain related configuration spaces. Given a partition $\mu=(\mu_1,\ldots,\mu_k)$, for each $n\geq |\mu|$ we define $B_{n,\mu}(M)$ to be the space of configurations of $n$ points in $M$, of which $\mu_1$ have one color, $\mu_2$ points have another color, etc., and the remaining $n-|\mu|$ points are uncolored. Points which have the same color, or which have no color, are indistinguishable, but points of different colors can be distinguished from each other. Thus $B_{n,\mu}(M)$ is the quotient of the ordered configuration space $C_n(M)$ by the Young subgroup $S_{\mu_1}\times \cdots\times S_{\mu_k}\times S_{n-|\mu|}$. The following theorem is proved in Section~\ref{section:unordered}. A motivic analogue of this theorem when $M$ is an algebraic variety has been proved by Vakil--Wood \cite{VW}.

\begin{thmvw}
For any connected orientable manifold $M$ of finite type, the configuration spaces $B_{n,\mu}(M)$ satisfy rational homological stability: for each $i\geq 0$ and $n\gg i$, there is an isomorphism $H_i(B_{n,\mu}(M);\Q)\approx H_i(B_{n+1,\mu}(M);\Q)$, with a stable range of $n\geq \max(2i,2|\mu|)$ if $\dim M\geq 3$ and a stable range of $n\geq \max(4i,2|\mu|)$ if $\dim M=2$.
\end{thmvw}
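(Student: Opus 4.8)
The plan is to deduce Theorem~\theoremvw\ from the representation stability of $\{H^i(C_n(M);\Q)\}$ established in Theorem~\theoremmain, together with elementary representation theory of the symmetric groups. Write $H_n\coloneq S_{\mu_1}\times\cdots\times S_{\mu_k}\times S_{n-|\mu|}$ for the relevant Young subgroup. Since $H_n$ is a subgroup of $S_n$, it acts freely on $C_n(M)$, so $C_n(M)\to B_{n,\mu}(M)$ is a finite covering and the transfer homomorphism gives an isomorphism $H^i(B_{n,\mu}(M);\Q)\cong H^i(C_n(M);\Q)^{H_n}$; dually $\dim H_i(B_{n,\mu}(M);\Q)=\dim H^i(C_n(M);\Q)^{H_n}$. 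Because two finite-dimensional $\Q$--vector spaces are isomorphic as soon as they have equal dimension, it suffices to show that $\dim H^i(C_n(M);\Q)^{H_n}$ is eventually constant in $n$, with the stated range---we need not (and, for closed $M$, cannot) exhibit a geometric map $B_{n,\mu}(M)\to B_{n+1,\mu}(M)$.

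To compute the invariants I would use Frobenius reciprocity: for any finite-dimensional $S_n$--representation $V_n$ we have $\dim V_n^{H_n}=\langle V_n,\Ind_{H_n}^{S_n}\mathbf{1}\rangle$, and $\Ind_{H_n}^{S_n}\mathbf{1}$ is the permutation module $M^{\mu[n]}$ attached to the partition $\mu[n]=(n-|\mu|,\mu_1,\ldots,\mu_k)$ as soon as $n\geq|\mu|+\mu_1$ (so that the composition $(n-|\mu|,\mu_1,\ldots,\mu_k)$ rearranges to the partition $\mu[n]$). By Young's rule the multiplicity of an irreducible $V_{\lambda[n]}$ in $M^{\mu[n]}$ is the Kostka number $K_{\lambda[n],\mu[n]}$. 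Taking $V_n=H^i(C_n(M);\Q)$ and writing $c_\lambda(n)$ for the multiplicity of $V(\lambda)_n$ in $H^i(C_n(M);\Q)$, this gives
\[\dim H^i(B_{n,\mu}(M);\Q)=\sum_\lambda c_\lambda(n)\,K_{\lambda[n],\mu[n]}.\]

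The crux is then a stability statement for Kostka numbers: for fixed partitions $\lambda,\mu$, the number $K_{\lambda[n],\mu[n]}$ is independent of $n$ once $n$ is large. I would prove this by inspecting tableaux directly. In a semistandard Young tableau of shape $\lambda[n]$ and content $\mu[n]$, column-strictness forces all $n-|\mu|$ entries equal to $1$ into the first row; deleting them leaves a semistandard filling of the skew shape $\lambda[n]/(n-|\mu|)$ with content $(\mu_1,\ldots,\mu_k)$. When $n\geq|\mu|+\lambda_1$ this skew shape is the disjoint union of a horizontal row of $|\mu|-|\lambda|$ boxes and a vertical translate of the Young diagram of $\lambda$ (and the count is $0$ if $|\mu|<|\lambda|$), so the number of its semistandard fillings of the prescribed content is visibly independent of $n$. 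Alternatively, one may cite the fact from \cite{CF} that the permutation-module sequence $\{M^{\mu[n]}\}$ is itself uniformly representation stable, which gives the same conclusion.

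Finally I would assemble the pieces and track ranges. Theorem~\theoremmain\ gives $c_\lambda(n)=c_\lambda$ constant for $n\geq 2i$ (resp.\ $n\geq 4i$ when $\dim M=2$); moreover the restriction on which irreducibles occur---an irreducible $V_\nu$ appears in $H^i(C_n(M);\Q)$ only if $2\nu_2+\nu_3+\cdots\leq 2i$, which for $\nu=\lambda[n]$ reads $2\lambda_1+\lambda_2+\cdots+\lambda_\ell=|\lambda|+\lambda_1\leq 2i$---shows that $c_\lambda=0$ outside a finite set of partitions $\lambda$, all satisfying $\lambda_1\leq i$ and $|\lambda|+\lambda_1\leq 2i$. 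For such $\lambda$, the constraints $n\geq|\lambda|+\lambda_1$ (needed for $\lambda[n]$ to make sense), $n\geq|\mu|+\lambda_1$ (needed for the Kostka stability above), and $n\geq|\mu|+\mu_1$ are all implied by $n\geq\max(2i,2|\mu|)$. Hence for $n\geq\max(2i,2|\mu|)$ (resp.\ $\max(4i,2|\mu|)$) the sum $\sum_\lambda c_\lambda(n)K_{\lambda[n],\mu[n]}$ is a finite sum each of whose terms is independent of $n$, which proves the theorem. There is no deep obstacle here---the statement is essentially formal given Theorem~\theoremmain---but the genuinely fiddly point is exactly this range bookkeeping: one must use the precise shape of the irreducibles occurring in $H^i(C_n(M))$ to see that Kostka stability has already set in by $n=\max(2i,2|\mu|)$, and for $\dim M=2$ one needs the analogue, for surfaces, of the bound $2\nu_2+\nu_3+\cdots\leq 2i$ underlying the range $n\geq 4i$.
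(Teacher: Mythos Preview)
Your proof is correct and follows the same strategy as the paper: reduce to $S_{n,\mu}$--invariants by transfer, then combine Theorem~\theoremmain\ with the stability in $n$ of the multiplicity of the trivial $S_{n,\mu}$--representation in each irreducible $V(\lambda)_n$ (the paper's Lemma~\ref{lem:vw}, proved there via iterated branching rather than your Young's-rule/Kostka argument). One simplification worth noting: since $K_{\lambda[n],\mu[n]}\neq 0$ forces $\lambda[n]\trianglerighteq\mu[n]$ in dominance, hence $|\lambda|\leq|\mu|$ and $\lambda_1\leq|\mu|$, your Kostka stability already holds uniformly at $n\geq 2|\mu|$ for every relevant $\lambda$, so you do not actually need to invoke the shape restriction on irreducibles appearing in $H^i(C_n(M))$, and in particular the surface case needs no separate analogue.
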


Another variant of $C_n(M)$ which has been much studied is $C_n(M,X)$, the configuration space of $n$ ordered points in $M$, each labeled by a point of an auxiliary space $X$. Since the labels are independent, we have $C_n(M,X)\simeq C_n(M)\times X^n$, with $S_n$ acting diagonally. The quotient $B_n(M,X)\coloneq C_n(M,X)/S_n$ is the configuration space of $n$ unordered points in $M$ labeled by $X$. By modifying the proof of Theorem~\theoremmain, it is possible to prove representation stability for $\{H^i(C_n(M,X);\Q)\}$, and thus to deduce rational homological stability for $B_n(M,X)$, as long as $M$ and $X$ are of finite type and $X$ is path-connected (though we do not include the details here). Homological stability for $B_n(M,X)$ was previously known (in integral homology) in the case when $M$ is open, but was not known when $M$ is closed. We hope to revisit these results in a future paper.

\para{Outline of proof}
We prove Theorem~\theoremmain\ by analyzing the Leray spectral sequence of the inclusion $C_n(M)\hookrightarrow M^n$. In the beautiful paper \cite{To}, Totaro completely described the $E_2$ page of this spectral sequence whenever $M$ is an orientable manifold. We recall his description in Section~\ref{section:configleray}. This spectral sequence converges to $H^*(C_n(M);\Q)$.

The main obstacle is to relate the spectral sequence associated to $C_n(M)\hookrightarrow M^n$ with the spectral sequence associated to $C_{n+1}(M)\hookrightarrow M^{n+1}$. In Church--Farb \cite{CF}, representation stability was proved for the cohomology of pure braid groups (that is, for the configuration space $C_n(\R^2)$). Using the ideas introduced there, we prove that the $E_2$ page of the Leray spectral sequence is representation stable. However, the techniques of \cite{CF} are not enough to conclude representation stability for $H^*(C_n(M))$, or even for later pages in the spectral sequence.

Using Theorem~\ref{theorem:pieri} (see below), we can prove that the $E_2$ page of the Leray spectral sequence is \emph{monotone} in the sense of Definition~\ref{def:monotone}.
Monotonicity is the key property that lets us promote representation stability from the $E_2$ page to the $E_3$ page, on to the $E_\infty$ page, and from there to the cohomology of $C_n(M)$. 
We emphasize that very little is known in general about how long this spectral sequence takes to degenerate, or what it converges to (see Remark~\ref{remark:FT}, for example).


\para{Monotonicity of induced representations}
The key technical step in the proof of Theorem~\theoremmain\ turns out to be the following proposition, which belongs purely to the representation theory of the symmetric groups.
Fix an integer $k\geq 0$ and a representation $V$ of $S_k$. We write $V\boxtimes \Q$ for the same vector space considered as a representation of $S_k\times S_\ell$ by defining $S_\ell$ to act trivially.
There is a natural inclusion \[\Ind_{S_k\times S_\ell}^{S_{k+\ell}} V\boxtimes \Q\hookrightarrow\Ind_{S_k\times S_{\ell+1}}^{S_{k+\ell+1}} V\boxtimes \Q\] making this into a consistent sequence of $S_{k+\ell}$--representations. The branching rule  (Proposition~\ref{prop:branching}) implies that this sequence is uniformly representation stable. What we will need is the following theorem, which is significantly stronger.
\begin{theorem:tech}
The sequence $\{\Ind_{S_k\times S_\ell}^{S_{k+\ell}} V\boxtimes \Q\}_\ell$ of $S_{k+\ell}$--representations is monotone and uniformly representation stable.
\end{theorem:tech}

Note that the most natural explanation for monotonicity would be that the $S_{k+\ell+1}$--span of each subrepresentation $V(\mu)_{k+\ell}$ is exactly $V(\mu)_{k+\ell+1}$. However, this is never the case.
The proof of Theorem~\ref{theorem:pieri} requires delving deeply into the explicit construction of the irreducible representations of $S_n$ and the concrete demonstration of the branching rule. An overview of the necessary representation theory, and the proof of Theorem~\ref{theorem:pieri}, make up Section~\ref{section:final}.

\para{Shortcomings} There is one main drawback to the approach taken in this paper: since we use the transfer map and the representation theory of the symmetric group, we can only draw conclusions about rational homology and cannot extend our results to integral homology. However, since Corollary~\corollarymain\ is false for integral homology, this drawback is in some sense unavoidable.

We also required in the main theorems that our manifolds be connected. Again this is unavoidable: for example, if $M$ has $k$ components, $\dim H^0(B_n(M))$ is the number of partitions of $n$ into $k$ nonnegative numbers, which grows without bound if $k>1$. The same explosion of $H^i(B_n(M))$ occurs in general for all $i$. Segal \cite[Proposition A.1]{Se} writes ``Of course we are assuming $M$ is connected.'' In McDuff \cite{McD} this assumption is not made explicit in the statements of the main theorems, but it is used in the proofs.

Finally, we restricted our attention to manifolds whose cohomology is finite-dimensional. This decision is partly aesthetic: saying that two multiplicities coincide conveys much less information when both are infinite. However, this hypothesis is used throughout the arguments in Section 2, and thus in the proofs of the main theorems. Since every manifold can be exhausted by submanifolds which are the interiors of compact manifolds, using Corollary~\theoremtwo\ it should be possible to recover many of the results of this paper for general manifolds (at least for homology). However, I do not know whether all results hold exactly as stated in such generality.

\para{Acknowledgements} Above all, I thank Benson Farb, who was closely involved during the development of these ideas but declined to be listed as a coauthor on this paper. Without his unflagging interest and support, this work would never have been begun, pursued, or completed. I am very grateful to Jordan Ellenberg for pointing out a mistake in an earlier version of this paper. I thank David Kazhdan for a very helpful conversation, and Ulrike Tillmann, Ravi Vakil, and Melanie Matchett Wood for suggesting additional applications.
Finally, I am indebted to the anonymous referee for their careful reading and excellent suggestions.

\section{Representation stability and monotonicity}
\label{section:repstability}
In the remainder of the paper, all vector spaces and all representations are over $\Q$, and all homology and cohomology groups are with $\Q$ coefficients.

\subsection{Representation stability}
\para{Representation stability} The notion of representation stability was defined, for many different families of groups, in Church--Farb \cite[Definition 2.6]{CF}, along with a number of variants. In this paper we will only need the definition of uniform representation stability for $S_n$--representations, which was given in Definition~\ref{def:repstability}. We point out one important change from the definition in \cite{CF}: in this paper we take all representations to be finite-dimensional, so that the multiplicities $c_\lambda(V_n)$ are always finite.
The following proposition is immediate, but we will use it repeatedly, so we state it explicitly.
A sequence of subspaces $\{W_n\}<\{V_n\}$ is a sequence $\{W_n\}$ so that for each $n$ we have $W_n<V_n$ and $\phi_n(W_n)\subset W_{n+1}$.
\begin{proposition}
\label{prop:unifsum} Given $\{W_n\}<\{V_n\}$, if both $\{V_n\}$ and $\{W_n\}$ are uniformly multiplicity stable for $n\geq N$, then $\{V_n/W_n\}$ is too. Conversely, if $\{W_n\}$ and $\{V_n/W_n\}$ are uniformly multiplicity stable for $n\geq N$, then $\{V_n\}$ is too.
\end{proposition}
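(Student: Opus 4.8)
The plan is to reduce everything to the additivity of multiplicities over direct sums, exploiting that we work over $\Q$. First I would invoke Maschke's theorem: since $\Q$ has characteristic zero, every finite-dimensional $S_n$--representation is completely reducible, so the short exact sequence of $S_n$--representations
\[0\to W_n\to V_n\to V_n/W_n\to 0\]
splits, giving an isomorphism $V_n\cong W_n\oplus (V_n/W_n)$ of $S_n$--representations for every $n$.

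Next, decomposing each of $W_n$, $V_n$, and $V_n/W_n$ into irreducibles and reading off the multiplicity of $V(\lambda)_n$ on both sides of this isomorphism, I obtain
\[c_\lambda(V_n)=c_\lambda(W_n)+c_\lambda(V_n/W_n)\]
for every partition $\lambda$ and every $n$; all three multiplicities are finite because all the representations in sight are finite-dimensional. One point worth flagging: the splitting of the sequence need not be compatible with the structure maps $\phi_n$. This causes no trouble, since uniform multiplicity stability (Condition III) is a condition on each individual representation $V_n$ in isolation and says nothing about the maps between them; so we are free to choose, for each $n$ separately, whatever splitting Maschke provides.

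Finally I would finish with an elementary observation about sequences of nonnegative integers: if $a_n=b_n+c_n$ for all $n$ and two of the three sequences $\{a_n\}$, $\{b_n\}$, $\{c_n\}$ are constant for $n\geq N$, then so is the third. Applying this to $a_n=c_\lambda(V_n)$, $b_n=c_\lambda(W_n)$, $c_n=c_\lambda(V_n/W_n)$ for each fixed $\lambda$ yields both directions at once: stability of $\{c_\lambda(V_n)\}$ and $\{c_\lambda(W_n)\}$ forces stability of $\{c_\lambda(V_n/W_n)\}$, and stability of $\{c_\lambda(W_n)\}$ and $\{c_\lambda(V_n/W_n)\}$ forces stability of $\{c_\lambda(V_n)\}$. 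The stable range $n\geq N$ is inherited verbatim from the hypotheses and is uniform in $\lambda$, so the conclusion is uniform multiplicity stability for the third sequence.

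I do not expect any genuine obstacle here — this is why the paper calls it immediate. The only subtlety, already noted, is that Condition III ignores the maps $\phi_n$, which is precisely what lets us use a non-consistent splitting; everything else is bookkeeping with additive multiplicities.
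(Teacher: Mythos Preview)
Your argument is correct and is essentially identical to the paper's proof, which simply records the identity $c_\lambda(V_n)=c_\lambda(W_n)+c_\lambda(V_n/W_n)$ and observes that constancy of any two terms forces constancy of the third. Your explicit invocation of Maschke's theorem and your remark that Condition~III ignores the maps $\phi_n$ just make explicit what the paper leaves implicit.
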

\begin{proof}
We have $c_\lambda(V_n)=c_\lambda(W_n)+c_\lambda(V_n/W_n)$ for all $\lambda$. If any two terms are constant for $n\geq N$, the third is constant as well.
\end{proof}

\para{Properties of monotone sequences}
In Definition~\ref{def:monotone} we introduced the notion of a \emph{monotone sequence}. This is a substitute for the ``type-preserving'' condition introduced in Church--Farb \cite[Definition 2.12]{CF}, which proved extremely useful in analyzing sequences of representations of algebraic groups. 
We begin by establishing some basic properties of monotone sequences.

In this paper, all the sequences we consider will be uniformly representation stable. We emphasize, however, that monotonicity can be usefully applied to sequences which are not stable, or which have not yet stabilized. As we will see in Proposition~\ref{prop:monsingle}, monotonicity has useful implications even for a single irreducible representation.\\

Given a consistent sequence $\{V_n\}$ with maps $\phi_n\colon V_n\to V_{n+1}$, if $W$ is a subspace of $V_n$, we write $S_{n+1}\cdot W$ for the $S_{n+1}$--span of $\phi_n(W)$. 
We will not be strict in always noting the range $n\geq N$; in each result in this section, the range is the same for all sequences unless otherwise specified.
\begin{proposition}
\label{prop:monsurj}
A sequence $\{V_n\}$ which is monotone and uniformly multiplicity stable for $n\geq N$ is uniformly representation stable for $n\geq N$.
\end{proposition}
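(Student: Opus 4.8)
The plan is to show that under the stated hypotheses, Conditions I and II of uniform representation stability follow from monotonicity together with uniform multiplicity stability; since Condition III is precisely uniform multiplicity stability, this finishes the proof. The key mechanism is that monotonicity forces $S_{n+1}\cdot W$ to be "at least as large as expected" on each isotypic piece, while multiplicity stability caps how large it can be, and the two bounds must agree.

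First I would address \textbf{surjectivity} (Condition II). Let $n\geq N$ and decompose $V_n=\bigoplus_\lambda V(\lambda)_n^{\oplus c_\lambda(V_n)}$. Applying monotonicity to each isotypic summand $W_\lambda\cong V(\lambda)_n^{\oplus c_\lambda(V_n)}$, the subrepresentation $S_{n+1}\cdot V_n\subset V_{n+1}$ contains a copy of $V(\lambda)_{n+1}^{\oplus c_\lambda(V_n)}$ for every $\lambda$; since distinct isotypic components are independent, $S_{n+1}\cdot V_n$ contains $\bigoplus_\lambda V(\lambda)_{n+1}^{\oplus c_\lambda(V_n)}$. By Condition III, $c_\lambda(V_n)=c_\lambda(V_{n+1})$ for all $\lambda$, so this subrepresentation already has the same multiplicities — hence the same dimension — as $V_{n+1}$ itself. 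Therefore $S_{n+1}\cdot V_n=V_{n+1}$, which is exactly Condition II.

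Next, \textbf{injectivity} (Condition I). Here I would use the surjectivity already established, applied one step down: for $n\geq N$ we have $S_{n+1}\cdot \phi_n(V_n) = V_{n+1}$, and more relevantly, the image $\phi_n(V_n)$ is an $S_n$-subrepresentation of $V_{n+1}$ isomorphic to a quotient of $V_n$. The cleanest route is a dimension count: $\phi_n$ is $S_n$-equivariant, so $\ker\phi_n$ is an $S_n$-subrepresentation of $V_n$, and $\phi_n(V_n)\cong V_n/\ker\phi_n$. Applying monotonicity to $V_n$ itself shows $S_{n+1}\cdot V_n$, which equals $\phi_n(V_n)$-span $=V_{n+1}$, contains $V(\lambda)_{n+1}^{\oplus c_\lambda(V_n)}$ for each $\lambda$; but $S_{n+1}\cdot V_n$ is generated by $\phi_n(V_n)$ as an $S_{n+1}$-module, and an $S_{n+1}$-module generated by an $S_n$-module $U$ has each multiplicity $c_\mu$ bounded by $\dim U$ times a constant — more carefully, I would invoke the branching rule: any irreducible $S_n$-constituent $V(\lambda)_n$ of $\phi_n(V_n)$ induces up to contain $V(\lambda)_{n+1}$ with multiplicity one, so $\dim\phi_n(V_n)$ controls $\sum_\lambda c_\lambda(V_{n+1})$ from below in a way that forces $c_\lambda(\phi_n(V_n))\geq c_\lambda(V_n)$ for all $\lambda$. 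Combined with $c_\lambda(\phi_n(V_n))\leq c_\lambda(V_n)$ (as $\phi_n(V_n)$ is a quotient of $V_n$), we get equality of all multiplicities, hence $\dim\phi_n(V_n)=\dim V_n$, hence $\ker\phi_n=0$.

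The main obstacle I anticipate is the injectivity step: one must be careful that monotonicity, as stated, is a statement about subspaces $W<V_n$ and the $S_{n+1}$-span of their images, and extracting from it a lower bound on the multiplicities of $\phi_n(V_n)$ itself (rather than of $S_{n+1}\cdot V_n$) requires a short argument via the branching rule, using that $V(\lambda)_n$ appears in the restriction of $V(\lambda)_{n+1}$ with multiplicity exactly one. Once that bookkeeping is in place, everything else is a dimension count. An alternative, possibly cleaner, approach to injectivity is to note that a surjection of consistent sequences which is surjective in the sense of Condition II and has both source and target uniformly multiplicity stable with equal multiplicities must be injective, by Proposition~\ref{prop:unifsum} applied to $\{\ker\phi_n\}<\{V_n\}$ — this forces $c_\lambda(\ker\phi_n)$ eventually constant, and since $\phi_n$ is generically injective for $n$ large (the multiplicities of the image stabilize from below by monotonicity), that constant is $0$.
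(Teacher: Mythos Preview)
Your surjectivity argument is correct and is exactly what the paper does. The problem is your injectivity argument: it is far more elaborate than necessary, and as written it is not complete. You try to bound $c_\lambda(\phi_n(V_n))$ from below by $c_\lambda(V_n)$ via the branching rule, but the implication ``$S_{n+1}\cdot \phi_n(V_n)$ contains $V(\lambda)_{n+1}^{\oplus c_\lambda(V_n)}$, therefore $c_\lambda(\phi_n(V_n))\geq c_\lambda(V_n)$'' does not follow: other $S_n$--irreducibles $V(\mu)_n$ in $\phi_n(V_n)$ can also contribute copies of $V(\lambda)_{n+1}$ after induction, so the $V(\lambda)_{n+1}$--multiplicity of the span does not pin down $c_\lambda(\phi_n(V_n))$. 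Your fallback sketch via Proposition~\ref{prop:unifsum} is similarly incomplete, since you never actually establish that the constant value of $c_\lambda(\ker\phi_n)$ is zero.

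You have overlooked the one-line argument the paper uses. Injectivity is an \emph{immediate} consequence of monotonicity alone, with no appeal to multiplicity stability, dimension counts, or the branching rule: if $\ker\phi_n\neq 0$, then being an $S_n$--subrepresentation it contains some $W\cong V(\lambda)_n$; but then $\phi_n(W)=0$, so $S_{n+1}\cdot\phi_n(W)=0$ certainly does not contain $V(\lambda)_{n+1}$, contradicting monotonicity applied to $W$ with $k=1$. That is the whole proof of Condition~I.
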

\begin{proof}
We are given uniform multiplicity stability (Condition III). The definition of monotonicity implies injectivity (Condition I), since monotonicity fails for any $W\approx V(\lambda)_n$ contained in the kernel of $\phi_n\colon V_n\to V_{n+1}$. It remains to check surjectivity (Condition II). Let $k$ be the stable multiplicity of $V(\lambda)_n$ in $V_n$ for $n\geq N$. By uniform multiplicity stability, the $V(\lambda)_{n+1}$--isotypic component is $V(\lambda)_{n+1}^{\oplus k}$. But monotonicity implies that the image $S_{n+1}\cdot V(\lambda)_n^{\oplus k}$ contains at least $V(\lambda)_{n+1}^{\oplus k}$. Thus each isotypic component of $V_{n+1}$ is contained in $S_{n+1}\cdot V_n$, so $S_{n+1}\cdot V_n=V_{n+1}$ as desired.
\end{proof}

\begin{proposition}
\label{prop:monquot}
If $\{V_n\}$ is monotone and $\{W_n\}<\{V_n\}$, then $\{W_n\}$ is monotone. If in addition $\{W_n\}$ is uniformly multiplicity stable with stable range $n\geq N$, then $\{V_n/W_n\}$ is monotone for $n\geq N$.
\end{proposition}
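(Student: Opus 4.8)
The plan is to verify the two assertions separately, in each case reducing to a subspace $W \approx V(\lambda)_n^{\oplus k}$ and checking that the $S_{n+1}$-span of its image contains $V(\lambda)_{n+1}^{\oplus k}$. For the first assertion, suppose $\{V_n\}$ is monotone and $\{W_n\} < \{V_n\}$. Let $W \approx V(\lambda)_n^{\oplus k}$ be a subspace of $W_n$. Since $W_n \subseteq V_n$, $W$ is also a subspace of $V_n$ of the required form, so monotonicity of $\{V_n\}$ says that $S_{n+1}\cdot W$ contains $V(\lambda)_{n+1}^{\oplus k}$. But $\phi_n(W) \subseteq \phi_n(W_n) \subseteq W_{n+1}$, and $W_{n+1}$ is $S_{n+1}$-invariant, so $S_{n+1}\cdot W \subseteq W_{n+1}$. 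Hence $W_{n+1}$ contains $V(\lambda)_{n+1}^{\oplus k}$, where this copy moreover lies inside $S_{n+1}\cdot W$; this is exactly monotonicity for $\{W_n\}$. Note this part needs no stability hypothesis.

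For the second assertion, write $\bar\phi_n \colon V_n/W_n \to V_{n+1}/W_{n+1}$ for the induced maps and $q_n\colon V_n \to V_n/W_n$ for the quotient. Fix $n \geq N$ and let $\bar W \approx V(\lambda)_n^{\oplus k}$ be a subspace of $V_n/W_n$. The natural move is to lift $\bar W$ to a subspace $W < V_n$ with $q_n(W) = \bar W$ and $W \approx \bar W$: since $S_n$-representations are semisimple, choose an $S_n$-equivariant splitting of $q_n$ and let $W$ be the image of $\bar W$ under it. Monotonicity of $\{V_n\}$ gives that $S_{n+1}\cdot W \subseteq V_{n+1}$ contains a copy of $V(\lambda)_{n+1}^{\oplus k}$. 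Applying $q_{n+1}$, and using that $q_{n+1}$ intertwines $\phi_n$ with $\bar\phi_n$ so that $q_{n+1}(S_{n+1}\cdot W) = S_{n+1}\cdot \bar W$ inside $V_{n+1}/W_{n+1}$, we conclude that $S_{n+1}\cdot\bar W$ contains the image $q_{n+1}\big(V(\lambda)_{n+1}^{\oplus k}\big)$.

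The one thing that is not automatic — and the main obstacle — is that $q_{n+1}$ might kill part of this copy of $V(\lambda)_{n+1}^{\oplus k}$, so that $q_{n+1}\big(V(\lambda)_{n+1}^{\oplus k}\big)$ is only $V(\lambda)_{n+1}^{\oplus j}$ for some $j < k$. This is where uniform multiplicity stability of $\{W_n\}$ for $n \geq N$ enters. By the first assertion $\{W_n\}$ is itself monotone; combined with uniform multiplicity stability, Proposition~\ref{prop:monsurj} shows $\{W_n\}$ is uniformly representation stable, so in particular $c_\lambda(W_{n+1}) = c_\lambda(W_n)$. A counting argument then forces $j = k$: the copy of $V(\lambda)_{n+1}^{\oplus k}$ inside $S_{n+1}\cdot W$ can meet $W_{n+1}$ in at most a $V(\lambda)_{n+1}^{\oplus(c_\lambda(W_{n+1}))}$-worth of isotypic component, but one checks using injectivity of $\phi_n$ on $W_n$ (a consequence of monotonicity of $\{W_n\}$) that $\phi_n$ already accounts for $c_\lambda(W_n) = c_\lambda(W_{n+1})$ copies of $V(\lambda)_{n+1}$ in $W_{n+1}$ coming from $W_n$ itself, and these are "used up" by the lift — so no isotypic room remains to collapse the new copy. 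Making this bookkeeping precise (keeping careful track of which isotypic pieces of $V_{n+1}$ are hit by $\phi_n(V_n)$ versus genuinely new) is the technical heart of the argument; everything else is formal manipulation of semisimple representations and equivariant maps.
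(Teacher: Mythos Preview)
Your argument for the first assertion is correct and matches the paper's one-line proof.

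For the second assertion, there is a genuine gap in the final ``bookkeeping'' step. You lift $\bar W\approx V(\lambda)_n^{\oplus k}$ to a complementary copy $W<V_n$ and apply monotonicity of $\{V_n\}$ to get $V(\lambda)_{n+1}^{\oplus k}$ inside $S_{n+1}\cdot W$. The problem is that nothing prevents this copy from lying entirely inside $W_{n+1}$: the fact that $\phi_n(W)\cap\phi_n(W_n)=0$ (which follows from injectivity) says nothing about $S_{n+1}\cdot\phi_n(W)\cap W_{n+1}$, since taking $S_{n+1}$-spans destroys complementarity. Your claim that the $c_\lambda(W_n)$ copies of $V(\lambda)_{n+1}$ in $W_{n+1}$ are ``used up by the lift'' is not justified --- $\phi_n(W_n^\lambda)$ consists of copies of $V(\lambda)_n$, not $V(\lambda)_{n+1}$, so there is no direct isotypic bookkeeping that separates it from $S_{n+1}\cdot W$.

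The fix, which is the paper's approach, is simple: instead of lifting $\bar W$ to a complementary copy, take the \emph{full preimage} $\widetilde U=q_n^{-1}(\bar W)<V_n$, which contains $W_n$ and has $c_\lambda(\widetilde U)=k+c_\lambda(W_n)$. Monotonicity of $\{V_n\}$ applied to the $\lambda$-isotypic part of $\widetilde U$ gives $c_\lambda(S_{n+1}\cdot\widetilde U)\geq k+c_\lambda(W_n)$. Since $q_{n+1}(S_{n+1}\cdot\widetilde U)=S_{n+1}\cdot\bar W$ and the kernel of $q_{n+1}$ on this subspace is contained in $W_{n+1}$, whose $\lambda$-multiplicity is $c_\lambda(W_{n+1})=c_\lambda(W_n)$ by stability, subtraction gives $c_\lambda(S_{n+1}\cdot\bar W)\geq k$. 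The point is that by including $W_n$ in what you feed to monotonicity, the loss upon projection is bounded above by $c_\lambda(W_{n+1})$, and stability makes this exactly the surplus you started with.
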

\begin{proof}
The first claim is trivial: every subrepresentation of $W_n$ is contained in $V_n$, so monotonicity for $\{V_n\}$ applies.
For the second claim, consider a subrepresentation $U<V_n/W_n$ with $U\approx V(\lambda)_n^{\oplus k}$. Let $\widetilde{U}<V_n$ be the preimage of $U$. Recall that $c_\lambda(W_n)$ is the multiplicity of $V(\lambda)_n$ in $W_n$, and note that \[c_\lambda(\widetilde{U})=k+c_\lambda(W_n).\] By monotonicity of $\{V_n\}$, the multiplicity $c_\lambda(S_{n+1}\cdot \widetilde{U})$ of $V(\lambda)_{n+1}$ in $S_{n+1}\cdot \widetilde{U}$ is at least $k+c_\lambda(W_n)$. But uniform stability of $\{W_n\}$ implies that $c_\lambda(W_{n+1})=c_\lambda(W_n)$. It follows that the projection of $S_{n+1}\cdot \widetilde{U}$ to $V_{n+1}/W_{n+1}$---that is, $S_{n+1}\cdot U$---contains $V(\lambda)_{n+1}$ with multiplicity at least $k$, as desired.
\end{proof}

\begin{proposition}
\label{prop:monfilt}
If $\{W_n\}<\{V_n\}$ is a sequence of subspaces so that both $\{W_n\}$ and $\{V_n/W_n\}$ are monotone, then $\{V_n\}$ is monotone. In particular, the direct sum of monotone sequences is monotone.
\end{proposition}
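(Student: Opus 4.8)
The plan is to imitate the structure of the proof of Proposition~\ref{prop:monquot}, working directly with multiplicities. Fix $n\geq N$ and a subrepresentation $U<V_n$ with $U\approx V(\lambda)_n^{\oplus k}$. We must show that $S_{n+1}\cdot U$ contains $V(\lambda)_{n+1}^{\oplus k}$. The natural move is to intersect $U$ with $W_n$ and project it to $V_n/W_n$: set $U_W\coloneq U\cap W_n$ and let $U'<V_n/W_n$ be the image of $U$ under the quotient map $q\colon V_n\to V_n/W_n$. Writing $c_\lambda(U_W)=a$ and $c_\lambda(U')=b$, the short exact sequence $0\to U_W\to U\to U'\to 0$ gives $a+b=k$ by additivity of multiplicities (Proposition~\ref{prop:unifsum} applied internally, or just the rank–nullity statement $c_\lambda(U)=c_\lambda(U_W)+c_\lambda(U')$).

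Next I would feed each piece into the monotonicity hypothesis for its sequence. Since $U_W<W_n$ is a subrepresentation isomorphic to $V(\lambda)_n^{\oplus a}$, monotonicity of $\{W_n\}$ gives that $S_{n+1}\cdot U_W\subset W_{n+1}$ contains $V(\lambda)_{n+1}^{\oplus a}$; and since $\phi_n(U_W)\subset\phi_n(U)$, this copy of $V(\lambda)_{n+1}^{\oplus a}$ sits inside $S_{n+1}\cdot U$, in fact inside $(S_{n+1}\cdot U)\cap W_{n+1}$. Similarly $U'<V_n/W_n$ is isomorphic to $V(\lambda)_n^{\oplus b}$, so monotonicity of $\{V_n/W_n\}$ gives that $S_{n+1}\cdot U'$ contains $V(\lambda)_{n+1}^{\oplus b}$; and because $q\colon V_n\to V_n/W_n$ commutes with $\phi_n$ and the $S_{n+1}$-action, $S_{n+1}\cdot U'$ is the image of $S_{n+1}\cdot U$ under $\bar q\colon V_{n+1}\to V_{n+1}/W_{n+1}$. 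So $\bar q(S_{n+1}\cdot U)$ contains $V(\lambda)_{n+1}^{\oplus b}$.

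To finish I would combine these two facts. We know $S_{n+1}\cdot U$ contains a subrepresentation $A\approx V(\lambda)_{n+1}^{\oplus a}$ lying in the kernel of $\bar q$, and that $\bar q(S_{n+1}\cdot U)\supset V(\lambda)_{n+1}^{\oplus b}$. Counting multiplicities in the exact sequence $0\to (S_{n+1}\cdot U)\cap W_{n+1}\to S_{n+1}\cdot U\to \bar q(S_{n+1}\cdot U)\to 0$ and using $A\subset (S_{n+1}\cdot U)\cap W_{n+1}$, we get
\[
c_\lambda(S_{n+1}\cdot U)\;=\;c_\lambda\big((S_{n+1}\cdot U)\cap W_{n+1}\big)+c_\lambda\big(\bar q(S_{n+1}\cdot U)\big)\;\geq\;a+b\;=\;k,
\]
which is exactly the assertion that $S_{n+1}\cdot U$ contains $V(\lambda)_{n+1}^{\oplus k}$. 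As $n\geq N$ and $\lambda$ were arbitrary, $\{V_n\}$ is monotone. The final sentence, that a direct sum of monotone sequences is monotone, follows by taking $\{W_n\}=\{V_n^{(1)}\}$ inside $\{V_n\}=\{V_n^{(1)}\oplus V_n^{(2)}\}$, for which $\{V_n/W_n\}\cong\{V_n^{(2)}\}$ is monotone by hypothesis; finitely many summands follow by induction.

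I expect the only genuinely delicate point to be the bookkeeping in the second paragraph: one has to make sure that the copy of $V(\lambda)_{n+1}^{\oplus a}$ produced by monotonicity of $\{W_n\}$ really does land inside $S_{n+1}\cdot U$ (not merely inside $S_{n+1}\cdot W_n$), which is why it is essential to apply monotonicity to $U_W$ itself rather than to $W_n$, and to track that $\bar q$ intertwines the two "span" operations. Everything else is additivity of multiplicities over short exact sequences, which is routine.
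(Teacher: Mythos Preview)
Your argument is correct and follows essentially the same route as the paper's proof: intersect $U$ with $W_n$, project to $V_n/W_n$, apply monotonicity to each piece, and add the resulting multiplicities via the short exact sequence for $S_{n+1}\cdot U$. If anything, your bookkeeping is slightly more careful---you only use the containment $S_{n+1}\cdot U_W\subset (S_{n+1}\cdot U)\cap W_{n+1}$, whereas the paper asserts an equality there that is not needed (and not obviously true).
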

\begin{proof}
Let $U$ be a subrepresentation of $V_n$ with $U\approx V(\lambda)_n^{\oplus k}$. Let $\overline{U}$ be the projection of $U$ to $V_n/W_n$. We have $U\cap W_n\approx V(\lambda)_n^{\oplus \ell}$ and $\overline{U}\approx V(\lambda)_n^{\oplus (k-\ell)}$ for some $\ell$. Thus $S_{n+1}\cdot (U\cap W_n)=(S_{n+1}\cdot U)\cap W_{n+1}$ contains at least $V(\lambda)_{n+1}^{\oplus \ell}$, and $S_{n+1}\cdot \overline{U}=\overline{S_{n+1}\cdot U}$ contains at least $V(\lambda)_{n+1}^{\oplus (k-\ell)}$. Combining these, we conclude that $S_{n+1}\cdot U$ contains at least $V(\lambda)_{n+1}^{\oplus k}$, as desired.
\end{proof}

The following proposition is the most important property of monotone sequences.
\begin{proposition}
\label{prop:monkerim}
Let $\{V_n\}$ and $\{W_n\}$ be monotone sequences for $n\geq N$, and assume that $\{V_n\}$ is uniformly multiplicity stable for $n\geq N$. Then for any consistent sequence of $S_n$--equivariant maps $f_n\colon V_n\to W_n$ (meaning that the following diagram commutes),
\begin{equation}
\label{eq:fn}
\xymatrix{
  V_n\ar^{f_n}[r]\ar_{\phi_n}[d]&W_{n}\ar^{\phi_n}[d]\\
  V_{n+1}\ar_{f_{n+1}}[r]&W_{n+1} }
\end{equation}
the sequences $\{\ker f_n\}$ and $\{\im f_n\}$ are monotone and uniformly multiplicity stable for $n\geq N$.
\end{proposition}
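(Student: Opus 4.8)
The plan is to dispose of monotonicity first — it is essentially formal — and then to extract uniform multiplicity stability from monotonicity via a simple ``squeeze'' argument.

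First I would observe that $\{\ker f_n\}<\{V_n\}$ and $\{\im f_n\}<\{W_n\}$ are sequences of subspaces in the sense defined above. Indeed, if $v\in\ker f_n$ then $f_{n+1}(\phi_n v)=\phi_n(f_n v)=0$ by commutativity of \eqref{eq:fn}, so $\phi_n(\ker f_n)\subseteq\ker f_{n+1}$; and if $w=f_n(v)$ then $\phi_n(w)=f_{n+1}(\phi_n v)\in\im f_{n+1}$, so $\phi_n(\im f_n)\subseteq\im f_{n+1}$. Since $\{V_n\}$ and $\{W_n\}$ are monotone for $n\geq N$, the first (trivial) claim of Proposition~\ref{prop:monquot} immediately gives that $\{\ker f_n\}$ and $\{\im f_n\}$ are monotone for $n\geq N$.

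Next I would record the elementary consequence of monotonicity that taking $W$ to be the full $V(\lambda)_n$--isotypic component in Definition~\ref{def:monotone} yields: if $\{U_n\}$ is monotone for $n\geq N$, then $c_\lambda(U_{n+1})\geq c_\lambda(U_n)$ for all $n\geq N$ and all $\lambda$, i.e.{} each multiplicity function $n\mapsto c_\lambda(U_n)$ is non-decreasing on $\{n\geq N\}$. Applying this to $\{\ker f_n\}$ and to $\{\im f_n\}$, and using that the first isomorphism theorem gives an $S_n$--equivariant isomorphism $V_n/\ker f_n\cong \im f_n$ compatible with the structure maps, we get $c_\lambda(\ker f_n)+c_\lambda(\im f_n)=c_\lambda(V_n)$ for every $n$ and every $\lambda$. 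By hypothesis the right-hand side is constant for $n\geq N$, while both summands on the left are non-decreasing there; hence each summand is constant for $n\geq N$. This is precisely uniform multiplicity stability for $\{\ker f_n\}$ and $\{\im f_n\}$ with stable range $n\geq N$, completing the proof.

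I do not expect a genuine obstacle here: once one notices that two non-decreasing integer sequences with constant sum must each be constant, the argument is immediate. The point worth emphasizing is that the hypothesis does its work on \emph{both} sides — it is monotonicity of $\{W_n\}$ (not any stability of $\{W_n\}$, which is never assumed) that forces $c_\lambda(\im f_n)$ to be non-decreasing, and this is the ingredient not already supplied by uniform multiplicity stability of $\{V_n\}$. Note that we inherit the range $n\geq N$ verbatim and need no further bookkeeping; alternatively, one could deduce uniform multiplicity stability of one of $\{\ker f_n\},\{\im f_n\}$ from that of the other via Proposition~\ref{prop:unifsum}, but the squeeze handles both simultaneously.
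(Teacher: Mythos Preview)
Your proof is correct and is essentially identical to the paper's own argument: both establish that $\{\ker f_n\}<\{V_n\}$ and $\{\im f_n\}<\{W_n\}$ via the commuting square, invoke the first part of Proposition~\ref{prop:monquot} for monotonicity, and then use the squeeze $c_\lambda(\ker f_n)+c_\lambda(\im f_n)=c_\lambda(V_n)$ with both summands nondecreasing and the sum constant. Your commentary on which hypothesis does what (monotonicity of $\{W_n\}$ being what forces $c_\lambda(\im f_n)$ nondecreasing) is a useful clarification the paper leaves implicit.
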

\begin{proof}
First, note that $\{\ker f_n\}$ is a subsequence $\{\ker f_n\}<\{V_n\}$, and similarly $\{\im f_n\}$ is a subsequence $\{\im f_n\}<\{W_n\}$. Indeed if $f_n(x)=0$, the relation \eqref{eq:fn} implies $f_{n+1}(\phi_n(x))=0$, so $\phi(\ker f_n)\subset \ker f_{n+1}$. Similarly, if $f_n(x)=y$, then $f_{n+1}(\phi_n(x))=\phi_n(y)$, so $\phi(\im f_n)\subset \im f_{n+1}$. Thus monotonicity for $\{\ker f_n\}$ and $\{\im f_n\}$ follows from Proposition~\ref{prop:monquot}. Monotonicity implies that the multiplicity $c_\lambda(\ker f_n)$ is nondecreasing, and also that the multiplicity $c_\lambda(\im f_n)$ is nondecreasing. But we have \[c_\lambda(\ker f_n)+c_\lambda(\im f_n)=c_\lambda(V_n),\] and $c_\lambda(V_n)$ is constant by uniform stability of $\{V_n\}$. It follows that the multiplicities of $V(\lambda)_n$ in $\ker f_n$ and in $\im f_n$ are constant as well, verifying uniform multiplicity stability. By Proposition~\ref{prop:monsurj}, this implies that $\{\ker f_n\}$ and $\{\im f_n\}$ are uniformly representation stable for $n\geq N$, as desired. 
\end{proof}

Finally, we point out that the proofs in this section involve interactions between $V(\lambda)_n$ and $V(\lambda)_{n+1}$, but never between different irreducibles $V(\lambda)_n$ and $V(\mu)_n$. This gives the following corollary, which we will use in Section~\ref{section:unordered} when we focus on the trivial representation $V(0)_n$.
\begin{proposition}
\label{prop:monsingle}
For a fixed partition $\lambda$, Propositions~\ref{prop:monquot}, \ref{prop:monfilt}, and \ref{prop:monkerim} remain true if ``uniformly multiplicity stable'' is replaced throughout by ``the multiplicity of $V(\lambda)_n$ is stable''.
\end{proposition}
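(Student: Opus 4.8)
The plan is to observe that Proposition~\ref{prop:monsingle} is not really a new theorem but a re-reading of the three proofs just given: each of them is already \emph{$\lambda$--local}, in the sense that the only way multiplicity stability enters is to assert that one specific multiplicity $c_\lambda(\,\cdot\,)$ is eventually constant in $n$, and at no point is $c_\lambda$ compared with $c_\mu$ for a different irreducible $V(\mu)_n$. So I would fix the partition $\lambda$ and go through Propositions~\ref{prop:monquot}, \ref{prop:monfilt}, and \ref{prop:monkerim} in turn, checking that each argument survives verbatim after replacing ``uniformly multiplicity stable'' by ``the multiplicity of $V(\lambda)_n$ is stable'' wherever it occurs.

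For Proposition~\ref{prop:monfilt} there is literally nothing to do: its hypothesis and conclusion refer only to monotonicity, so the statement does not change. For Proposition~\ref{prop:monquot}, the proof of the second claim uses uniform stability of $\{W_n\}$ exactly once --- to conclude $c_\lambda(W_{n+1})=c_\lambda(W_n)$ for the partition $\lambda$ under consideration --- while every other ingredient (passing to the preimage $\widetilde U$, the identity $c_\lambda(\widetilde U)=k+c_\lambda(W_n)$, and the monotonicity lower bound on $c_\lambda(S_{n+1}\cdot\widetilde U)$) involves only that same $\lambda$. Hence the proof reads through unchanged once uniform multiplicity stability is weakened to stability of $c_\lambda(W_n)$.

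For Proposition~\ref{prop:monkerim}, the hypothesis on $\{V_n\}$ is used only to say that $c_\lambda(V_n)$ is constant, and the identity $c_\lambda(\ker f_n)+c_\lambda(\im f_n)=c_\lambda(V_n)$ together with monotonicity (which makes both summands nondecreasing) then forces $c_\lambda(\ker f_n)$ and $c_\lambda(\im f_n)$ to be constant --- again all for the fixed $\lambda$. Monotonicity of $\{\ker f_n\}$ and $\{\im f_n\}$ comes from the (now strengthened) Proposition~\ref{prop:monquot}. The one sentence that needs rephrasing rather than copying is the closing appeal to Proposition~\ref{prop:monsurj}, which genuinely ranges over all partitions: in the $\lambda$--local setting I would replace it by the two facts actually extracted from the proof of Proposition~\ref{prop:monsurj}, namely that monotonicity alone gives injectivity of the $\phi_n$, and that monotonicity plus stability of $c_\lambda$ forces the relevant $V(\lambda)_{n+1}$--isotypic components (in $V_{n+1}$ for $\ker f_n$, in $W_{n+1}$ for $\im f_n$) to lie in the appropriate $S_{n+1}$--spans. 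This is the $\lambda$--local analogue of representation stability, and it is all one should claim here.

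I do not expect a genuine obstacle; this is a bookkeeping argument whose whole content is the ``$\lambda$--locality'' remark preceding the proposition. The only points demanding a moment's care are making sure, in each proof, that the step ``$c_\lambda$ nondecreasing and $c_\lambda(\ker f_n)+c_\lambda(\im f_n)=c_\lambda(V_n)$ constant $\Rightarrow$ each summand constant'' is read one partition at a time rather than as a statement about the whole isotypic decomposition, and correctly restating the conclusion of Proposition~\ref{prop:monkerim} so that it asserts $\lambda$--local multiplicity stability (and the corresponding $\lambda$--local form of representation stability), not uniform stability.
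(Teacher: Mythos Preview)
Your proposal is correct and follows exactly the paper's own reasoning: the paper does not give a separate proof but simply observes (in the sentence preceding the proposition) that the arguments for Propositions~\ref{prop:monquot}, \ref{prop:monfilt}, and \ref{prop:monkerim} only involve interactions between $V(\lambda)_n$ and $V(\lambda)_{n+1}$, never between different irreducibles. Your careful walk-through of each proof, including the correct handling of the appeal to Proposition~\ref{prop:monsurj} in the $\lambda$--local setting, is precisely the verification this remark calls for.
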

We remark that the proposition holds even if the monotonicity assumption is only known for subrepresentations isomorphic to $V(\lambda)_n^{\oplus k}$.

\subsection{Induced representations}
\label{section:induced} If $G$ is a finite group with subgroup $H$, recall that for any $H$--representation $V$, the induced representation $\Ind_H^G V$ is the $G$--representation defined by the adjuction:
\[\Hom_H(V,W)\approx \Hom(\Ind_H^G V,W)\] That is, for any $G$--representation $W$, an $H$--linear map $V\to W$ uniquely determines and is determined by a $G$--linear map $\Ind_H^G V\to W$. The induced representation can be constructed explicitly as
\[\Ind_H^G V\coloneq V\otimes_{\Q H}\Q G.\]
The following criterion is essentially another definition of the induced representation. We will use this criterion to recognize induced representations that arise naturally in the proofs ahead.
\begin{criterion}
\label{crit:ind}
Let $V$ be a $G$--representation decomposing as a direct sum $V=\bigoplus_i W_i$ whose summands are permuted transitively by the action of $G$, meaning that for any $i$ and any $g\in G$, we have $g(W_i)=W_j$ for some $j$, and for any $i$ we have $g(W_0)=W_i$ for some $g\in G$. Let $H=\Stab_G(W_0)$ and consider $W_0$ as an $H$--representation. Then $V$ is the $G$--representation induced from the $H$--representation $W_0$:
\[V=\Ind_H^G W_0\]
\end{criterion}

\para{The branching rule}
Fix a representation $V$ of $S_k$. For each $n\geq k$ we may consider the $S_n$--representation \[I_n(V)\coloneq \Ind_{S_k\times S_{n-k}}^{S_n}V\boxtimes \Q.\]  Recall that $V\boxtimes \Q$ denotes the extension of $V$ to a representation of $S_k\times S_{n-k}$ by letting $S_{n-k}$ act trivially. These representations fit into a consistent sequence
\[V\to I_{k+1}(V)\to I_{k+2}(V)
\to \cdots\to I_n(V)\overset{\iota_n}{\to} I_{n+1}(V)\to \cdots\]
where the map $\iota_n\colon I_n(V)\to I_{n+1}(V)$ is the composition \begin{equation}\label{eq:iotan}I_n(V)\hookrightarrow \Ind_{S_n}^{S_{n+1}} I_n(V)\twoheadrightarrow I_{n+1}(V).\end{equation} Here the first map is the natural inclusion, while the second is the natural surjection
\[\Ind_{S_n}^{S_{n+1}} I_n(V)
=V\otimes_{\Q(S_k\times S_{n-k}\times S_1)}\Q S_{n+1}\twoheadrightarrow V\otimes_{\Q(S_k\times S_{n-k+1})}\Q S_{n+1}=I_{n+1}(V).
\]
\begin{theorem}
\label{theorem:pieri}
If $V$ is a fixed representation of $S_k$, the sequence of induced representations $\{\Ind_{S_k\times S_{n-k}}^{S_n} V\boxtimes \Q\}$ is monotone for $n\geq k$ and uniformly representation stable for $n\geq 2k$.
\end{theorem}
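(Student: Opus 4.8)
The plan is to reduce to the case where $V = V_\mu$ is irreducible, since induction and the operations $\oplus$ commute, and monotonicity is preserved under direct sums by Proposition~\ref{prop:monfilt}. So fix $\mu \vdash k$; by Pieri's rule (the iterated branching rule), the decomposition of $\Ind_{S_k\times S_{n-k}}^{S_n} V_\mu \boxtimes \Q$ into irreducibles is multiplicity-free on the ``wide'' partitions and stabilizes once $n \geq 2k$: for $n$ large, $c_\lambda(I_n(V_\mu))$ counts the number of ways to obtain $\lambda[n]$ from $\mu$ by adding a horizontal strip, which depends only on $\lambda$ and not on $n$. This gives uniform multiplicity stability for $n \geq 2k$ directly. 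By Proposition~\ref{prop:monsurj}, it then suffices to prove monotonicity for $n \geq k$: that for every subrepresentation $W < I_n(V_\mu)$ with $W \cong V(\lambda)_n^{\oplus j}$, the span $S_{n+1}\cdot W$ contains $V(\lambda)_{n+1}^{\oplus j}$.

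The heart of the matter is a concrete model for $I_n(V_\mu)$ and its map $\iota_n$ in terms of Young's construction. I would realize $I_n(V_\mu)$ as a quotient (or sub) of the permutation-type module built from $V_\mu$ with $n-k$ ``marked'' points carrying the trivial action, and track how the polytabloid basis vectors behave. The key observation — which the paper flags as the reason monotonicity is subtle — is that $S_{n+1}\cdot V(\lambda)_n$ is \emph{strictly larger} than $V(\lambda)_{n+1}$: the span of a single copy of $V(\lambda)_n$ under $S_{n+1}$ decomposes into several irreducibles (by the branching rule applied to $\Ind_{S_n}^{S_{n+1}}V(\lambda)_n$, it contains $V(\lambda)_{n+1}$ together with $V(\lambda^-)_{n+1}$ for partitions $\lambda^-$ obtained by removing a box, and also $V(\lambda^+)_{n+1}$ for adding a box, after cancellation in \eqref{eq:iotan}). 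So the task is \emph{not} to show that the isotypic components map into each other cleanly, but rather that enough of the target copy $V(\lambda)_{n+1}^{\oplus j}$ is captured. I would do this by choosing, for each irreducible constituent $V(\lambda)_n$ of $I_n(V_\mu)$, an explicit highest-weight-style vector (a specific polytabloid) whose image under $\iota_n$ is a nonzero vector generating a copy of $V(\lambda)_{n+1}$ inside $I_{n+1}(V_\mu)$, and verifying that distinct copies in the source produce linearly independent (hence spanning) copies in the target. The combinatorics here is a bookkeeping of horizontal-strip additions: adding the $(n+1)$-st point to the "$S_{n-k+1}$-trivial" block sends a $\lambda$-summand to a sum over the partitions obtained by adding one box in the first row or creating a new shorter row, and one checks that the $\lambda \to \lambda$ (first-row) transition is always surjective at the level of the whole isotypic component.

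The main obstacle — and where the real work of Section~\ref{section:final} goes — is establishing that the $\lambda$-to-$\lambda$ piece of $\iota_n$ is actually onto the full $V(\lambda)_{n+1}$-isotypic component of $I_{n+1}(V_\mu)$, uniformly in $j$ and compatibly across all $\lambda$ simultaneously. Naively, one copy of $V(\lambda)_n$ in the source hits only \emph{part} of one copy of $V(\lambda)_{n+1}$, with the rest of that target copy coming from the images of $V(\mu')_n$-summands with $\mu'$ obtained from $\mu$ by box moves; so proving monotonicity requires showing these contributions assemble correctly rather than interfere. I expect to handle this by working with an explicit basis of $I_n(V_\mu)$ indexed by pairs (standard tableau of shape $\mu$, placement of $n-k$ unordered marks) and computing the matrix of $\iota_n$ in a triangular form with respect to a dominance-type ordering, so that surjectivity onto each isotypic piece can be read off from non-vanishing of certain ``leading'' coefficients — these turn out to be products of simple integer ratios that are manifestly nonzero for $n \geq k$. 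Once monotonicity for $n\geq k$ and uniform multiplicity stability for $n \geq 2k$ are in hand, Proposition~\ref{prop:monsurj} gives uniform representation stability for $n \geq 2k$ and the theorem follows.
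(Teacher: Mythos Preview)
Your broad strategy---reduce to $V=V_\lambda$ irreducible, read off uniform multiplicity stability from Pieri's rule, and prove monotonicity via a triangularity argument with respect to a dominance-type ordering---is exactly the route the paper takes. But your proposal stops short of the construction that actually makes the triangularity work, and in one place you misidentify what needs to be proved.

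First, since $I_n(V_\lambda)$ is \emph{multiplicity-free} by Pieri, monotonicity reduces to a single statement: for each $\mu\vdash n$ with $\lambda\leadsto\mu$, the composition $V_\mu\hookrightarrow I_n(V_\lambda)\xrightarrow{\iota_n} I_{n+1}(V_\lambda)\twoheadrightarrow V_{\mu\{n+1\}}$ is nonzero. There is no issue of ``contributions from $V(\mu')_n$-summands assembling correctly''---the target isotypic component is one-dimensional over $\mathrm{End}(V_{\mu\{n+1\}})$, so either the single copy of $V_\mu$ surjects onto it or it misses it entirely. Your description of the obstacle (``one copy hits only part of one copy'') is not what is happening.

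Second, the genuine difficulty---which your plan acknowledges but does not resolve---is that $V_\mu$ sits inside $I_n(V_\lambda)$ in a way that is hard to describe explicitly, so you cannot simply write down a polytabloid generating it and push it through $\iota_n$. The paper's device is to replace $V_\mu$ by a larger, \emph{explicit} subrepresentation $W^\mu$ spanned by certain alternating sums $w_\T$ over $\ColStab(\T)$ for $\T$ a tableau of shape $\mu$. Two facts make this work: (i) there are explicit maps $\pi_\nu\colon I_n(V_\lambda)\to V_\nu$, and one checks combinatorially that $\pi_\mu(w_\T)=c\cdot v_\T\neq 0$ while $\pi_\nu(w_\T)=0$ for $\nu\succ\mu$, so $W^\mu\subset\bigoplus_{\nu\preceq\mu}V_\nu$; and (ii) $\iota_n(w_\T)=w_{\T_{\{n+1\}}}$ on the nose, so $S_{n+1}\cdot W^\mu$ surjects to $V_{\mu\{n+1\}}$. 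The triangularity you anticipated then finishes it: for $\nu\prec\mu$ the branching rule forces $S_{n+1}\cdot V_\nu$ to contain only $V_\eta$ with $\eta\preceq\nu\{n+1\}\prec\mu\{n+1\}$, so the surjection $S_{n+1}\cdot W^\mu\twoheadrightarrow V_{\mu\{n+1\}}$ must already be carried by $V_\mu$. Your ``triangular matrix with nonvanishing diagonal'' intuition is correct, but the $w_\T$ construction and the verification of Claims~2 and~3 in Proposition~\ref{prop:filtration} are the missing substance.
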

Given an irreducible representation $V_\lambda$  of $S_k$, the \emph{branching rule} (Proposition~\ref{prop:branching}) describes the decomposition of the $S_n$--representation $\Ind_{S_k\times S_{n-k}}^{S_n} V_\lambda\boxtimes \Q$.
Uniform stability can be deduced from the branching rule, as D. Hemmer explained to us (see \cite{He} or \cite[Theorem 4.2]{CF}). The real content of Theorem~\ref{theorem:pieri} is that this sequence is monotone; this seems to require much deeper analysis, and is proved in Section~\ref{section:final}.

\section{Ordered configuration spaces}
\label{section:main}
In this section we prove the main theorem of the paper.
\begin{main}
For any connected orientable manifold $M$ of finite type and any $i\geq 0$, the cohomology groups $\{H^i(C_n(M);\Q)\}$ of the ordered configuration space $C_n(M)$ are monotone and uniformly representation stable, with stable range $n\geq 2i$ if $\dim M\geq 3$ and stable range $n\geq 4i$ if $\dim M=2$.
\end{main}

For the rest of the paper, all cohomology is taken with rational coefficients unless otherwise specified, and all tensor products are over $\Q$. Let $M$ be a connected orientable manifold, and let $d=\dim M$ be the dimension of $M$. We assume that $M$ is of \emph{finite type}, meaning that $M$ has finite-dimensional cohomology; this holds in particular for compact manifolds and the interiors of compact manifolds.

\subsection{Outline of proof}
The proof of Theorem~\theoremmain\ occupies the rest of Section~\ref{section:main}, and consists of a number of steps. We begin by analyzing the Leray spectral sequence associated to the inclusion $C_n(M)\hookrightarrow M^n$. We denote the $(p,q)$--entry of the $r$th page by $E^{p,q}_r(n)$, and the differential $\partial$ by $\partial^{p,q}_r(n)\colon E^{p,q}_r(n)\to E^{p+r,q-r+1}_r(n)$. The first page is $E_2^{p,q}(n)$, and the first step is to understand the $E_2$ page and prove representation stability for $\{E_2^{p,q}(n)\}$.
\begin{step1}
For any $p\geq 0$, $q\geq 0$, the sequence $\{E_2^{p,q}(n)\}$ is uniformly representation stable with stable range $n\geq 4\frac{q}{d-1}+2p$.
\end{step1}
We will also need the monotonicity of this first page.
\begin{step2}
For any $p\geq 0$, $q\geq 0$, the sequence $\{E_2^{p,q}(n)\}$ is monotone in the sense of Definition~\ref{def:monotone} for $n\geq 1$.
\end{step2}
The next step is to prove by induction on $r$ that each page of the spectral sequence is uniformly representation stable.
\begin{step3}
For any $p\geq 0$, $q\geq 0$, $r\geq 0$, the sequence $\{E_r^{p,q}(n)\}$ is uniformly representation stable for $n\geq 2(p+q)$ and monotone for $n\geq 2(p+q-1)$ if $\dim M\geq 3$, and uniformly representation stable for $n\geq 4(p+q)$ and monotone for $n\geq 4(p+q-1)$ if $\dim M=2$.
\end{step3}

We now prove representation stability as in the main theorem.
\begin{step4}
For any $i\geq 0$, the sequence of cohomology groups $\{H^i(C_n(M))\}$ is monotone and uniformly representation stable, with stable range $n\geq 2i$ if $\dim M\geq 3$, and with stable range $n\geq 4i$ if $\dim M=2$. 
\end{step4}
Finally, we deduce Corollary~\theoremtwo\ from Theorem~\theoremmain.
\begin{step5}
For any inclusion $\iota\colon M\hookrightarrow N$ inducing $\iota^*\colon H^i(C_n(N))\to H^i(C_n(M))$, the sequences $\{\ker \iota^*\}$ and $\{\im \iota^*\}$ are monotone and uniformly representation stable, with stable range $n\geq 2i$ if $\dim N\geq 3$ and $\dim M\geq 3$, and with stable range $n\geq 4i$ otherwise.
\end{step5} 

\begin{remark}
\label{remark:FT}
Totaro \cite[Theorem 3]{To} proved that if $M$ is a smooth complex projective variety, then the Leray spectral sequence for $C_n(M)\hookrightarrow M^n$ has a single nontrivial differential and degenerates thereafter. However, Felix--Thomas \cite{FT2} have constructed examples of closed manifolds for which the Leray spectral sequence does not degenerate after the first nontrivial differential.
\end{remark}

\subsection{Configuration spaces and the Leray spectral sequence}
\label{section:configleray}
\para{Leray spectral sequence} The Leray spectral sequence for the inclusion $\iota\colon C_n(M)\hookrightarrow M^n$ has $E_2$ page given by
\[E_2^{p,q}=H^p(M^n;R^q\iota_*\underline{\Q})\] and converges to $H^{p+q}(C_n(M);\underline{\Q})=H^{p+q}(C_n(M))$. Here $R^q\iota_*\underline{\Q}$ is the higher direct image sheaf, which in this case is just the sheaf associated to the presheaf
\[U\mapsto H^q(U\cap C_n(M)).\] Totaro \cite{To} found an explicit description of the cohomology groups $H^p(M^n;R^q\iota_*\underline{\Q})$, which is fundamental in the proof of Theorem~\theoremmain\ and is described below.

By the naturality of the Leray spectral sequence, the action of $S_n$ on the pair $(M^n,C_n(M))$ induces an action of $S_n$ on the corresponding spectral sequence; that is, an action on each entry $E_r^{p,q}$ commuting with the differential and inducing the action on $E_{r+1}^{p,q}$. Similarly, the forgetful map of pairs $(M^{n+1},C_{n+1}(M))\to (M^n,C_n(M))$ induces a map of spectral sequences. The fact that this map commutes with the action of $S_n$ (thought of as a subgroup of $S_{n+1}$ on the left side) implies that each entry $E_r^{p,q}$ yields a consistent sequence of $S_n$--representations in the sense of Definition~\ref{def:repstability}. The decompositions constructed in the proof of Step 1 will be preserved by the map $E_2^{p,q}(n)\to E_2^{p,q}(n+1)$, and thus give a decomposition into consistent sequences of representations.

\para{Sub-configuration spaces and diagonals}
 For each partition $\cS$ of $\{1,\ldots,n\}$ into subsets, let $C_\cS(M)$ be the subset of $M^n$ where no coordinates within the same subset coincide. Given such a partition $\cS$ of $\{1,\ldots,n\}$ into subsets, let $|\cS|$ be the number of subsets, and $\overline{\cS}$ the corresponding partition of $n$. For example, if $\cS$ is the partition $\{\{1,2\},\{3,4\},\{5,6\}\}$ of $\{1,\ldots,6\}$, we have $|\cS|=3$, $\overline{\cS}=(2,2,2)$, and
\[C_\cS(M)\coloneq \left\{(x_1,\ldots,x_6)\in M^6\big|x_1\neq x_2, x_3\neq x_4,x_5\neq x_6\right\}.\]
Such a space is the product of smaller configuration spaces. For example, in the case above $C_\cS(M)$ can be identified with $C_2(M)\times C_2(M)\times C_2(M)$. Conversely, let $M^{\cS}$ be the subspace where any two coordinates within the same subset of $\cS$ are required to be equal. In the example above, $M^{\cS}$ would be
\[M^{\cS}\coloneq \left\{(x_1,\ldots,x_6)\in M^6\big|x_1=x_2, x_3=x_4,x_5=x_6\right\}.\]
For future reference, given $\cS$, let $\cS_1$ be the union of the singleton sets in $\cS$, and let $\cS_{\geq 2}$ be $\cS$ with all singleton sets removed. For example, if $\cS=\{\{1,2,4\},\{5,7\},\{3\},\{6\}\}$, then $\cS_1=\{3,6\}$ and $\cS_{\geq 2}=\{\{1,2,4\},\{5,7\}\}$. Finally, if $\cS$ is a partition of $\{1,\ldots,n\}$, let $\cS[n+1]$ be the partition of $\{1,\ldots,n+1\}$ defined by $\cS[n+1]\coloneq \cS\cup\{\{n+1\}\}$.

\para{Euclidean configuration spaces} Arnol'd \cite{Ar} computed the cohomology $H^*(C_n(\R^2))$, and Brieskorn \cite[Lemma 3]{Br} proved that it naturally splits as a direct sum over sub-configuration spaces, as follows. It is not hard to check that the cohomological dimension of $C_n(\R^2)$ is $n-1$, so the cohomological dimension of $C_\cS(\R^2)$ is $n-|\cS|$. Since $C_n(\R^2)$ is contained in $C_\cS(\R^2)$, there is a natural map $H^*(C_\cS(\R^2))\to H^*(C_n(\R^2))$ which turns out to be an embedding. (In some sense, this fact is the foundation of our entire approach to the cohomology of configuration spaces.) The cohomology of the configuration space $C_n(\R^2)$ splits as
\[H^*(C_n(\R^2))=\bigoplus_\cS H^{\ttop}(C_\cS(\R^2))=\bigoplus_\cS H^{n-|\cS|}(C_\cS(\R^2)).\]
For higher-dimensional Euclidean configuration spaces, we have the same decomposition, the only difference being that $C_n(\R^d)$ has cohomological dimension  $(d-1)(n-1)$:
\[H^*(C_n(\R^d))=\bigoplus_\cS H^{\ttop}(C_\cS(\R^d))=\bigoplus_\cS H^{(d-1)(n-|\cS|)}(C_\cS(\R^d)).\]
The action of $S_n$ on $H^*(C_n(\R^d))$ respects this decomposition, permuting the summands according to the action of $S_n$ on partitions $\cS$ of $\{1,\ldots,n\}$. Similarly, the natural map $H^*(C_n(\R^d))\to H^*(C_{n+1}(\R^d))$ respects the decomposition, taking the summand corresponding to $\cS$ to the summand corresponding to $\cS[n+1]$. We will use the following observation in the proof of Steps 1 and 2: singleton sets in $\cS$ correspond to factors of the form $C_1(\R^d)\simeq \R^d$ and thus contribute no cohomology. It follows that any element of $S_n$ which only permutes singleton sets in $\cS$ acts trivially on the summand $H^{(d-1)(n-|\cS|)}(C_\cS(\R^d))$.

\para{Totaro's description of $E_2^{*,*}$} For a $d$--dimensional orientable manifold $M$, Totaro \cite[Theorem 1]{To} proved that the $E_2$ page of the Leray spectral sequence has a similar decomposition. First, ignoring the bigrading, he proved that
\[\bigoplus E_2^{*,*}(C_n(M)\hookrightarrow M^n)=\bigoplus_\cS H^{(d-1)(n-|\cS|)}(C_\cS(\R^d))\otimes H^*(M^\cS).\] The bigrading is such that $H^p(M^\cS)$ has bigrading $(p,0)$ and $H^{(d-1)(n-|\cS|)}(C_\cS(\R^d))$ has bigrading $(0,(d-1)(n-|\cS|))$. Thus the individual terms $E_2^{p,q}$ vanish unless $q$ is divisible by $d-1$, in which case we have (note the reindexing) 
\begin{equation}
\label{eq:e2decomp}
E_2^{p,q(d-1)}=\bigoplus_{\substack{\cS\text{ with}\\|\cS|=n-q}} H^{q(d-1)}(C_\cS(\R^d))\otimes H^p(M^\cS).
\end{equation}
The action of $S_n$ respects this decomposition, permuting the summands according to the action of $S_n$ on partitions $\cS$ of $\{1,\ldots,n\}$ into $n-q$ subsets. The map $E_2(n)\to E_2(n+1)$ also respects this decomposition, taking the summand corresponding to $\cS$ to the summand corresponding to $\cS[n+1]$. The map $H^*(C_\cS(\R^d))\to H^*(C_{\cS[n+1]}(\R^d))$ is the restriction of the natural map $H^*(C_n(\R^d))\to H^*(C_{n+1}(\R^d))$. The map $H^p(M^\cS)\to H^p(M^{\cS[n+1]})$ is similarly induced by forgetting the last point.

\subsection{Stability and monotonicity of $E_2^{p,q}$ (Steps 1 and 2)}
Fix $p$ and $q$ for the rest of this section. Reindexing as in \eqref{eq:e2decomp}, our goal is to prove that the sequence $\{E_2^{p,q(d-1)}(C_n(M)\hookrightarrow M^n)\}$ is monotone and uniformly representation stable. Injectivity follows from the decomposition \eqref{eq:e2decomp} plus the injectivity of the maps $H^*(C_n(\R^d))\to H^*(C_{n+1}(\R^d))$ and $H^p(M^\cS)\to H^p(M^{\cS[n+1]})$. Injectivity for the latter follows from the K\"unneth formula, and injectivity for the former can be seen from the presentation of $H^*(C_n(\R^d))$ (see \cite[Theorem 4.1]{CF}). By Proposition~\ref{prop:monsurj}, it suffices to prove monotonicity and uniform multiplicity stability.

We proceed as follows. First, we will decompose $E_2^{p,q(d-1)}$ as a sum of subrepresentations $E(\mu)_n$, and then further into subrepresentations $E(\mu,r,\alpha)_n$. These subrepresentations will be defined later. We want to show that for each $\mu$, $r$, and $\alpha$ the sequence $\{E(\mu,r,\alpha)_n\}$ is multiplicity stable. To do this, we identify $E(\mu,r,\alpha)_n$ with $\Ind_{S_k\times S_{n-k}}^{S_n}V(\mu,r,\alpha)\boxtimes \Q$, where $V(\mu,r,\alpha)$ is a fixed $S_k$--representation not depending on $n$. Then we appeal to Theorem~\ref{theorem:pieri}, whose proof we defer to Section~\ref{section:final}, to conclude that $\{E(\mu,r,\alpha)_n\}$ is monotone and uniformly multiplicity stable. Finally, Proposition~\ref{prop:unifsum} and Proposition~\ref{prop:monfilt} imply that $\{E_2^{p,q(d-1)}(n)\}$ is monotone and uniformly multiplicity stable, as desired.

\para{Decomposing $E_2^{p,q(d-1)}$ into subrepresentations} The decomposition we describe in the next two sections of the proof holds for all $n$, but not all summands occur when $n$ is small. So that all summands are guaranteed to occur, we assume for simplicity that $n\geq p+2q$, but we emphasize that the description below applies for all $n\geq 1$.

Given a partition $\cS$ of $\{1,\ldots,n\}$ into subsets, recall that $\overline{\cS}$ is the partition of $n$ encoding the sizes of those subsets. Fix a partition $\mu=(\mu_1\geq \cdots\geq \mu_m)$ with $\mu\vdash q$. Throughout the proof, $m$ will always be $\ell(\mu)$, the number of entries of $\mu$; in particular, $\mu_m>0$. Given such a $\mu$, we define the partition \[\mu\langle n\rangle=(\mu_1+1\geq \mu_2+1\geq \ldots\geq\mu_m+1\geq 1\geq \ldots\geq 1)\] with $n-q$ entries, so that $\mu\langle n\rangle\vdash n$ and $\ell(\mu\langle n\rangle)=n-q$. (Note that $m\leq q$, so our assumption gives $n-q\geq p+q\geq q\geq m$.) Let $E(\mu)_n$ be the subspace of $E_2^{p,q(d-1)}$ defined by
\[E(\mu)_n=\bigoplus_{\substack{\cS\text{ with}\\\overline{\cS}=\mu\langle n\rangle}} H^{q(d-1)}(C_\cS(\R^d))\otimes H^p(M^\cS).\] The action of $S_n$ on partitions $\cS$ of $\{1,\ldots,n\}$ does not change $\overline{\cS}$, so $E(\mu)_n$ is preserved by the action of $S_n$ on $E_2^{p,q(d-1)}$.
However, we will need to decompose $E(\mu)_n$ further, according to the decomposition of $H^p(M^\cS)$.

\para{Decomposing $E(\mu)_n$ further} Recall that $\cS_1$ is the union of singleton sets in $\cS$, and $\cS_{\geq 2}$ is $\cS$ with singleton sets removed. Note that if $\overline{\cS}=\mu\langle n\rangle$ for $\mu$ as above, we have $|\cS_{\geq 2}|=m$. We have a natural splitting $M^{\cS}=M^{\cS_{\geq 2}}\times M^{\cS_1}$. (Of course we could decompose this product further, but it turns out that for us the important distinction is between $\cS_{\geq 2}$ and $\cS_1$.) For $i\in \cS_1$, denote by $M_i$ the $i$th factor of $M^n$ (or $M^\cS$ or $M^{\cS_1}$, depending on context). In the following formula $r$ ranges from $0$ to $p$, and $a$ ranges over all functions on $\cS_1$ with nonnegative integer values and total sum $p-r$. By ordering the elements of $S_1$ as $i_1,\ldots,i_{n-q-m}$, we can identify such a function with its sequence of values $a=(a_1,\ldots,a_{n-q-m})$; we have $a_j\geq 0$ and $r+\sum a_j=p$. We will see below that the choice of ordering is irrelevant. For future reference, we define $\supp(a)$ to be the subset of $\cS_1$ where $a$ is nonzero.

The K\"unneth formula gives a decomposition
\begin{align*}H^p(M^\cS)&=\bigoplus_{r+r'=p}H^r(M^{\cS_{\geq 2}})\otimes H^{r'}(M^{\cS_1})\\
&=\bigoplus_{r+\sum a_j=p}H^r(M^{\cS_{\geq 2}})\otimes H^{a_1}(M_{i_1})\otimes \cdots\otimes H^{a_{n-q-m}}(M_{i_{n-q-m}})
\end{align*}
We denote by $H^{(r,a)}(M^\cS)$ the summand corresponding to $(r,a)$, so that the above decomposition can be abbreviated simply as
\[H^p(M^\cS)=\bigoplus_{r+\sum a_j=p}H^{(r,a)}(M^\cS).\]

\noindent Given such a function $a$, let $\alpha=\overline{a}$ be the partition  $\alpha=(\alpha_1\geq \cdots\geq \alpha_\ell)$ consisting of the positive values of $a$, arranged in descending order. For example, if $a=(0,0,2,0,3,1,0)$, so that
\begin{equation}
\label{eq:Hma}
H^{(m,a)}(M^\cS)= H^m(M^{\cS_{\geq 2}})\otimes H^2(M_3)\otimes H^3(M_5)\otimes H^1(M_6),
\end{equation} we have $\overline{a}=(3\geq 2\geq 1)$. For the rest of the proof, $\ell$ will be $\ell(\overline{a})$, which is just $|\supp(a)|$, the number of positive values of $a$. Note that since $\sum a_j\leq p$ and $a$ is integer-valued, we always have $\ell\leq p$. Also note that neither $\ell$ nor the partition $\overline{a}$ depends on the ordering on $\cS_1$ we chose earlier. 

For $r$ an integer and $\alpha$ a partition with $0\leq r\leq p$ and $r+|\alpha|= p$, define $H^{(r,\alpha)}(M^\cS)$ to be the subspace of $H^p(M^\cS)$ defined by
\[H^{(r,\alpha)}(M^\cS)=\bigoplus_{\overline{a}=\alpha}H^{(r,a)}(M^\cS).\] When the factors in a product are permuted, the corresponding summands in the K\"unneth formula are similarly permuted (with a sign depending on the grading). It follows that any element of $S_n$ which preserves $\cS$ will preserve the subspace $H^{(r,\alpha)}(M^\cS)$. Furthermore, an element which takes $\cS$ to another partition $\cS'$ of $\{1,\ldots,n\}$ will take $H^{(r,\alpha)}(M^\cS)$ to the subspace $H^{(r,\alpha)}(M^{\cS'})$. This implies that if we define $E(\mu,r,\alpha)_n$ to be the subspace
\[E(\mu,r,\alpha)_n\coloneq \bigoplus_{\overline{\cS}=\mu\langle n\rangle}H^{q(d-1)}(C_\cS(\R^d))\otimes H^{(r,\alpha)}(M^\cS),\] then $E(\mu,r,\alpha)_n$ is preserved by the action of $S_n$. Note that \[E(\mu)_n=\bigoplus_{\substack{0\leq r\leq p\\r+|\alpha|=p}}E(\mu,r,\alpha)_n,\]
and that the set of $r$ and $\alpha$ which occur is finite and independent of $n$. Thus it suffices to show that for fixed $\mu$, $r$, and $\alpha$ the sequence $\{E(\mu,r,\alpha)_n\}$ is monotone and uniformly multiplicity stable for $n\geq 4q+2p$. (This differs from the bound of $n\geq 4\frac{q}{d-1}+2p$ in the statement of the proposition because we are considering $E_2^{p,q(d-1)}$, rather than $E_2^{p,q}$.)

\para{$E(\mu,r,\alpha)_n$ as an induced representation} Fix $\mu$, $r$, and $\alpha$ for the rest of the proof. Recall that $H^{(r,\alpha)}(M^\cS)$ is the direct sum, over functions $a$ on $\cS_1$ with $\overline{a}=\alpha$, of $H^{(r,a)}(M^\cS)$. Thus $E(\mu,r,\alpha)_n$ can be rewritten as
\[E(\mu,r,\alpha)_n\coloneq \bigoplus_{\substack{\overline{\cS}=\mu\langle n\rangle\\\overline{a}=\alpha}}H^{q(d-1)}(C_\cS(\R^d))\otimes H^{(r,a)}(M^\cS).\] From the discussion above, this decomposition is preserved by the action of $S_n$, meaning that each element of $S_n$ either preserves a given summand or takes it to another summand (according to the action of $S_n$ on partitions $\cS$ and on functions $a$ supported on $\cS_1$). Thus applying Criterion~\ref{crit:ind}, if $\T$ is an arbitrary partition with $\overline{\T}=\mu\langle n\rangle$ and $b$ a function on $\T_1$ with $\overline{b}=\alpha$, we have \[E(\mu,r,\alpha)_n=\Ind_{\Stab(\T,b)}^{S_n} H^{q(d-1)}(C_\T(\R^d))\otimes H^{(r,b)}(M^\T)\]
We will later describe a rule for choosing $\T$ and $b$, but for now we allow them to be arbitrary. The subgroup $\Stab(\T,b)<S_n$ stabilizing the partition $\T$ and the function $b$ can be frustrating to describe exactly, but fortunately we will not have to do so. Let us abbreviate $H^{q(d-1)}(C_\T(\R^d))\otimes H^{(r,b)}(M^\T)$ to just $H(\T,r,b)$.

\para{Finding a subgroup acting trivially} By $\bigcup \T_{\geq 2}$ we mean the union of all subsets in $\T_{\geq 2}$, which is exactly $\{1,\ldots,n\}\setminus \T_1$. It suffices to recall that $|\T_{\geq 2}|=m$, so that $|\bigcup\T_{\geq 2}|=q+m$, and that $|\supp(b)|=\ell\leq p$. We can clearly choose $\T$ and $b$ so that
\begin{equation}
\label{eq:Tb}
\bigcup \T_{\geq 2}\cup \supp(b) = \{1,\ldots,q+m+\ell\}.
\end{equation} Define $k\coloneq q+m+\ell$ for the rest of the proof. Consider $S_{n-k}$ as the subgroup of $S_n$ permuting the subset $\{k+1,\ldots,n\}$. Whenever we refer to $S_{n-k}$, we mean this subgroup, and similarly $S_k$ is the subgroup permuting $\{1,\ldots,k\}$. First, note that \[S_{n-k}<\Stab(\T,b)<S_k\times S_{n-k}.\] Indeed \eqref{eq:Tb} implies that $\{k+1,\ldots,n\}\subset \T_1$, so $S_{n-k}<\Stab(\T)$, and futhermore that $\{k+1,\ldots,n\}$ is disjoint from $\supp(b)$, so $S_{n-k}<\Stab(b)$. But both $\T_{\geq 2}$ and $\supp(b)$ are preserved by $\Stab(\T,b)$, so $\Stab(\T,b)$ preserves their union $\{1,\ldots,k\}$, implying $\Stab(\T,b)<\Stab(\{1,\ldots,k\})=S_k\times S_{n-k}$.\pagebreak

The key point is that in fact $S_{n-k}$ acts trivially on $H(\T,r,b)=H^{q(d-1)}(C_\T(\R^d))\otimes H^{(r,b)}(M^\T)$. For the first factor, we have already observed (in the section ``Euclidean configuration spaces'' of Section~\ref{section:configleray}) that any permutation which only permutes elements of $\T_1$ acts trivially on $H^{q(d-1)}(C_\T(\R^d))$, since $\T_1$ consists of exactly those components which do not contribute to cohomology.

For the second factor $H^{(r,b)}(M^\T)$, note that we have not yet used the assumption that $M$ is connected (except implicitly in the example \eqref{eq:Hma}). This assumption implies that $H^0(M)=\Q$. Since $\{k+1,\ldots,n\}$ is disjoint from $\supp(b)$, the contribution of these factors to $H^{(r,b)}(M^\T)$ is just \[\cdots\otimes H^0(M_{k+1})\otimes \cdots\otimes H^0(M_n)=\cdots\otimes\Q\otimes \cdots\otimes \Q=\cdots\otimes\Q.\] In particular, permuting these factors among themselves acts trivially on $H^{(r,b)}(M^\T)$.

\para{$E(\mu,r,\alpha)_n$ as an induced representation, simplified} The fact that $S_{n-k}<\Stab(\T,b)<S_k\times S_{n-k}$ implies that $\Stab(\T,b)=H\times S_{n-k}$ for some $H<S_k$. We define the $S_k$--representation $V(\mu,r,\alpha)$ as follows:
\[V(\mu,r,\alpha)\coloneq \Ind_H^{S_k} H^{q(d-1)}(C_\T(\R^d))\otimes H^{(r,b)}(M^\T)\]
We have written $V(\mu,r,\alpha)$ without a subscript because, as we will see below, $V(\mu,r,\alpha)$ does not depend on $n$.

The reason for focusing on this representation is that it lets us rewrite $E(\mu,r,\alpha)_n$ as
\[E(\mu,r,\alpha)_n=\Ind_{S_k\times S_{n-k}}^{S_n}\ \ V(\mu,r,\alpha)\boxtimes \Q.\] Recall that $V(\mu,r,\alpha)\boxtimes \Q$ is just the $S_k$--representation $V(\mu,r,\alpha)$, extended to a representation of $S_k\times S_{n-k}$ by defining $S_{n-k}$ to act trivially. Then verifying the claim is just a matter of rearranging definitions:
\begin{align*}
E(\mu,r,\alpha)_n&=\Ind_{\Stab(\T,b)}^{S_n} H(\T,r,b)\\
&=\Ind_{H\times S_{n-k}}^{S_n} H(\T,r,b)\boxtimes \Q\\
&=\Ind_{S_k\times S_{n-k}}^{S_n}\left(\Ind_{H\times S_{n-k}}^{S_k\times S_{n-k}} H(\T,r,b)\boxtimes \Q\right)\\
&=\Ind_{S_k\times S_{n-k}}^{S_n}\left(\Ind_{H}^{S_k} H(\T,r,b)\right)\boxtimes \Q\\
&=\Ind_{S_k\times S_{n-k}}^{S_n}V(\mu,r,\alpha)\boxtimes \Q
\end{align*}

\para{Relating $E(\mu,r,\alpha)_n$ to $E(\mu,r,\alpha)_{n+1}$}
Our goal is to verify that the inclusion \[E(\mu,r,\alpha)_n\hookrightarrow E(\mu,r,\alpha)_{n+1}\] coincides with the map \[\iota_n\colon \Ind_{S_k\times S_{n-k}}^{S_n}V(\mu,r,\alpha)\boxtimes \Q\to \Ind_{S_k\times S_{n+1-k}}^{S_{n+1}}V(\mu,r,\alpha)\boxtimes \Q\] described in Section~\ref{section:induced}, so that we can apply Theorem~\ref{theorem:pieri}.\pagebreak

Given $\mu$ and $\alpha$, for each $n$ we chose a partition $\T$ of $\{1,\ldots,n\}$ so that $\overline{\T}=\mu\langle n\rangle$, and a function $b$ on $\T_1$ with $\overline{b}=\alpha$. So far we have not pinned down $\T$ and $b$ exactly, only requiring that $\bigcup\T_{\geq 2}\cup \supp(b)=\{1,\ldots,k\}$. We now describe a global choice of $\T$ and $b$ so that the following properties hold: if $\T$ is the partition of $\{1,\ldots,n\}$ associated to $\mu$, the partition of $\{1,\ldots,n+1\}$ associated to $\mu$ is $\T\langle n+1\rangle$. Furthermore, if $b$ is the function on $\T_1$ associated to $\alpha$, the corresponding function on $\T\langle n+1\rangle_1=\T_1\cup\{n+1\}$ should be the same function extended by 0 on $n+1$. The simplest way to do this explicitly is to ``left-justify'' $\T$ and $b$: for example, if $\mu=(2,2)$, so that $\mu\langle n\rangle=(3,3,1,\ldots,1)$, take \[\T=\{\{1,2,3\},\{4,5,6\},\{7\},\{8\},\ldots,\{n\}\}.\] Then if $\alpha=(4,2)$, define $b$ by $b(7)=4$, $b(8)=2$, and $b(i)=0$ otherwise.

Recall that $\Stab(\T,b)$ splits as a product $H\times S_{n-k}$. The properties just described imply that the action of $H$ on $H(\T,r,b)$ is independent of $n$,  and thus the action of $S_k$ on $V(\mu,r,\alpha)$ is independent of $n$ as well. We now can apply Theorem~\ref{theorem:pieri} to conclude that \{$E(\mu,r,\alpha)_n\}$ is monotone for $n\geq k$ and uniformly representation stable for $n\geq 2k$.

Since $k=q+m+\ell$, $m\leq q$, and $\ell\leq p$, we have $k\geq 2q+p$, so in particular we have stability for $n\geq 4q+2p$, as desired. By Proposition~\ref{prop:unifsum} and Proposition~\ref{prop:monfilt}, this implies that $\{E_2^{p,q(d-1)}(n)\}$ is monotone and uniformly representation stable for $n\geq 4q+2p$. This concludes the proof of Steps 1 and 2.\\

We remark that this bound is sharp for $\mu=(1,\ldots,1)\vdash q$, $r=0$, and $\alpha=(1,\ldots,1)\vdash p$. In this case we have
\[V(\mu,r,\alpha)=H^1(C_2(\R^d))\otimes \cdots\otimes H^1(C_2(\R^d))\otimes H^1(M)\otimes \cdots\otimes H^1(M)\] with the term $H^1(C_2(\R^d))$ occuring $q$ times and the term $H^1(M)$ occuring $p$ times. A summand of this form can only occur for $n\geq 2q+p$, and $\Ind_{S_k\times S_{n-k}}^{S_n} V(\mu,r,\alpha)$ only stabilizes once $n\geq 4q+2p$.

\subsection{Stability and monotonicity of $E_r^{p,q}$ (Step 3)}
\label{section:usingmono}
We first prove that $\{E_r^{p,q}(n)\}$ is monotone and uniformly representation stable for sufficiently large $n$. We then explain how to obtain the desired explicit bound on the stable range.

For the moment we are ignoring the stable range, so every time we write ``monotone'' or ``uniformly representation stable'' in this paragraph, ``for sufficiently large $n$'' is implied. We also use ``uniformly stable'' as an abbreviation for ``uniformly representation stable''. The proof proceeds by induction on $r$. Our base case is $r=2$, in which case Steps 1 and 2 state that for each $p\geq 0$ and $q\geq 0$ the sequence $\{E_2^{p,q}(n)\}$ is monotone and uniformly representation stable. Assume that $\{E_r^{p,q}(n)\}$ is monotone and uniformly representation stable. By Proposition~\ref{prop:monkerim}, the sequences $\{\ker \partial_r^{p,q}(n)\}$ and $\{\im\partial_r^{p,q}(n)\}$ are monotone and uniformly representation stable. The next page $E_{r+1}^{p,q}(n)$ is given by
\[E_{r+1}^{p,q}(n)=\ker\partial_r^{p,q}(n)\ /\ \im\partial_r^{p-r,q+r-1}(n).\] By Proposition~\ref{prop:unifsum}, $\{E_{r+1}^{p,q}(n)\}$ is uniformly multiplicity stable. Furthermore, since both terms are monotone and $\{\im\partial_r^{p-r,q+r-1}(n)\}$ is uniformly stable, Proposition~\ref{prop:monfilt} implies that $\{E_{r+1}^{p,q}(n)\}$ is monotone. By Proposition~\ref{prop:monsurj}, these imply that $\{E_{r+1}^{p,q}(n)\}$ is uniformly representation stable, as desired.\pagebreak

The explicit stable range is provided by the following proposition.
\begin{proposition}
\label{prop:inductivestep}
Given a consistent sequence $\{E_r^{p,q}(n)\}$ of $S_n$-equivariant spectral sequences an integer $\ell$, and a positive real number $m$, if $\{E_2^{p,q}(n)\}$ is uniformly stable for $n\geq m(p+q+\ell)$ and monotone for $n\geq m(p+q+\ell-1)$,
then $\{E_r^{p,q}(n)\}$ is uniformly stable for $n\geq m(p+q+\ell)$ and monotone for $n\geq m(p+q+\ell-1)$ for all $r\geq 2$.
\end{proposition}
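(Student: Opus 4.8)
The plan is to run the same induction on $r$ that already appeared in the text, but now tracking the numerical bounds carefully. Fix $m$ and $\ell$ as in the hypothesis. The statement to prove by induction on $r\geq 2$ is: for every $p,q\geq 0$, the sequence $\{E_r^{p,q}(n)\}$ is uniformly stable for $n\geq m(p+q+\ell)$ and monotone for $n\geq m(p+q+\ell-1)$. The base case $r=2$ is exactly the hypothesis.

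For the inductive step, assume the claim for $E_r$. The differential out of the $(p,q)$ spot is $\partial_r^{p,q}(n)\colon E_r^{p,q}(n)\to E_r^{p+r,q-r+1}(n)$, and the differential into it is $\partial_r^{p-r,q+r-1}(n)\colon E_r^{p-r,q+r-1}(n)\to E_r^{p,q}(n)$. The crucial observation is that all three of these bigradings lie on the same antidiagonal: $(p)+(q)=(p+r)+(q-r+1)-1=(p-r)+(q+r-1)+1$. Wait---more precisely, $E_r^{p,q}$, $E_r^{p+r,q-r+1}$, and $E_r^{p-r,q+r-1}$ have total degrees $p+q$, $p+q+1$, and $p+q-1$ respectively. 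So to analyze the $(p,q)$ entry of $E_{r+1}$ I need monotonicity and stability for $E_r$ entries of total degree $p+q$ and $p+q-1$, and I need the inductive monotonicity bound (which is weaker, offset by $m$) to be good enough. By induction, $\{E_r^{p,q}(n)\}$ and $\{E_r^{p-r,q+r-1}(n)\}$ are uniformly stable for $n\geq m(p+q+\ell)$ (both have total degree $\leq p+q$, and stability bounds only improve as total degree decreases, since $p+q\mapsto m(p+q+\ell)$ is increasing) and monotone for $n\geq m(p+q+\ell-1)$. Similarly $\{E_r^{p+r,q-r+1}(n)\}$ has total degree $p+q+1$, so it is monotone for $n\geq m(p+q+\ell)$ and uniformly stable for $n\geq m(p+q+1+\ell)$; the monotonicity at $n\geq m(p+q+\ell)$ is what matters. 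Now apply Proposition~\ref{prop:monkerim} to $\partial_r^{p,q}(n)\colon E_r^{p,q}(n)\to E_r^{p+r,q-r+1}(n)$: the source is uniformly multiplicity stable and monotone for $n\geq m(p+q+\ell-1)$, and the target is monotone for $n\geq m(p+q+\ell-1)$ as well (since $p+q+1 > p+q$ gives an even better monotonicity range), so $\{\ker\partial_r^{p,q}(n)\}$ and $\{\im\partial_r^{p,q}(n)\}$ are monotone and uniformly multiplicity stable for $n\geq m(p+q+\ell-1)$. Likewise $\{\ker\partial_r^{p-r,q+r-1}(n)\}$ and $\{\im\partial_r^{p-r,q+r-1}(n)\}$ are monotone and uniformly multiplicity stable for $n\geq m((p-r)+(q+r-1)+\ell-1)=m(p+q+\ell-2)$, hence also for $n\geq m(p+q+\ell-1)$.

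Then $E_{r+1}^{p,q}(n)=\ker\partial_r^{p,q}(n)\,/\,\im\partial_r^{p-r,q+r-1}(n)$. By Proposition~\ref{prop:monfilt} (using that $\{\im\partial_r^{p-r,q+r-1}(n)\}$ is monotone and the quotient $\{E_{r+1}^{p,q}(n)\}$ would be monotone once we know it, or rather: $\ker\partial_r^{p,q}$ is an extension with sub $\im\partial_r^{p-r,q+r-1}$ and quotient $E_{r+1}^{p,q}$, so apply Proposition~\ref{prop:monquot} since $\{\im\partial_r^{p-r,q+r-1}(n)\}$ is uniformly multiplicity stable) we get that $\{E_{r+1}^{p,q}(n)\}$ is monotone for $n\geq m(p+q+\ell-1)$. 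For multiplicity stability, Proposition~\ref{prop:unifsum} applied to the pair $\im\partial_r^{p-r,q+r-1}(n)<\ker\partial_r^{p,q}(n)$ with quotient $E_{r+1}^{p,q}(n)$: both sub and total are uniformly multiplicity stable for $n\geq m(p+q+\ell-1)$, hence so is the quotient. To get the stronger stability range $n\geq m(p+q+\ell)$: apply the same argument but note that at $n\geq m(p+q+\ell)$ all the inductive inputs are not just monotone but uniformly stable, and run Proposition~\ref{prop:monkerim}, Proposition~\ref{prop:unifsum}, Proposition~\ref{prop:monsurj} to conclude $\{E_{r+1}^{p,q}(n)\}$ is uniformly representation stable for $n\geq m(p+q+\ell)$. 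This closes the induction.

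I expect the only real subtlety---and hence the main thing to get right rather than a genuine obstacle---is bookkeeping the offset between the monotonicity range and the stability range, and checking that along each of the three relevant antidiagonal entries the inductively given ranges are at least as strong as what Proposition~\ref{prop:monkerim} and Proposition~\ref{prop:monfilt} demand. The function $t\mapsto m(t+\ell)$ being monotonically increasing in the total degree $t$ is what makes everything line up: entries of smaller total degree have better (smaller) stability thresholds, and the incoming differential comes from total degree $p+q-1$, which is why the monotonicity hypothesis at $n\geq m(p+q+\ell-1)$ for the $E_2$ page propagates correctly. There is no new conceptual ingredient beyond Propositions~\ref{prop:unifsum}, \ref{prop:monsurj}, \ref{prop:monquot}, \ref{prop:monfilt}, and \ref{prop:monkerim}; the proof is purely a matter of feeding the spectral sequence machinery through these lemmas while carrying the arithmetic.
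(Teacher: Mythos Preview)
Your overall strategy matches the paper's, but there is a genuine gap in the bookkeeping that you flagged as ``the only real subtlety.'' When you apply Proposition~\ref{prop:monkerim} to $\partial_r^{p,q}(n)\colon E_r^{p,q}(n)\to E_r^{p+r,q-r+1}(n)$, you claim the source is uniformly multiplicity stable for $n\geq m(p+q+\ell-1)$ and the target is monotone for $n\geq m(p+q+\ell-1)$ ``since $p+q+1>p+q$ gives an even better monotonicity range.'' Both claims are wrong, and the parenthetical has the inequality backwards: a \emph{larger} total degree gives a \emph{larger} (i.e.\ worse) threshold. By the inductive hypothesis, the source $E_r^{p,q}$ is only uniformly stable for $n\geq m(p+q+\ell)$, and the target $E_r^{p+r,q-r+1}$ (total degree $p+q+1$) is only monotone for $n\geq m((p+q+1)+\ell-1)=m(p+q+\ell)$. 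So Proposition~\ref{prop:monkerim} only yields monotonicity and stability of $\{\ker\partial_r^{p,q}(n)\}$ for $n\geq m(p+q+\ell)$, not $m(p+q+\ell-1)$.

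This matters, because you need monotonicity of the kernel at the \emph{better} threshold $n\geq m(p+q+\ell-1)$ to feed into Proposition~\ref{prop:monquot} and conclude monotonicity of $E_{r+1}^{p,q}$ at that threshold. The paper fills this gap with one extra sentence: since $\ker\partial_r^{p,q}(n)\subset E_r^{p,q}(n)$ is a subsequence, the first clause of Proposition~\ref{prop:monquot} says it inherits monotonicity from $\{E_r^{p,q}(n)\}$, which is monotone for $n\geq m(p+q+\ell-1)$ by induction. Once you insert this observation, the rest of your argument goes through: $\{\im\partial_r^{p-r,q+r-1}(n)\}$ is monotone and uniformly stable for $n\geq m(p+q+\ell-1)$ as you say (total degree $p+q-1$, so here the inequality genuinely helps), and then Propositions~\ref{prop:monquot} and~\ref{prop:unifsum} give monotonicity of $E_{r+1}^{p,q}$ at $m(p+q+\ell-1)$ and stability at $m(p+q+\ell)$.
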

The hypothesis of the proposition is provided by Steps 1 and 2, which state that $\{E_2^{p,q}(n)\}$ is uniformly stable for $n\geq 4\frac{q}{d-1}+2p$ and monotone for $n\geq 1$. If $d\geq 3$ we have $4\frac{q}{d-1}+2p\leq 2(p+q)$, so the hypothesis of the proposition holds with $m=2$ and $\ell=0$, while if $d=2$ we have $4\frac{q}{d-1}+2p=4q+2p\leq 4(p+q)$, so the hypothesis of the proposition holds with $m=4$ and $\ell=0$.
\begin{proof}[Proof of Proposition~\ref{prop:inductivestep}]
Assume by induction on $r$ that $\{E_{r}^{p,q}(n)\}$ is uniformly stable for $n\geq m(p+q+\ell)$ and monotone for $n\geq m(p+q+\ell-1)$. The next page is given by
\[E_{r+1}^{p,q}(n)=\ker\partial_{r}^{p,q}(n)\ /\ \im\partial_{r}^{p-r,q+r-1}(n)\]
The inductive hypothesis implies that the domain of $\partial_r^{p-r,q+r-1}(n)$ is uniformly stable for $n\geq m(p+q+\ell-1)$ and monotone for $n\geq m(p+q+\ell-2)$, and its codomain is monotone for $n\geq m(p+q+\ell-1)$. Applying Proposition~\ref{prop:monkerim}, the image $\{\im\partial_{r}^{p-r,q+r-1}(n)\}$ is monotone and uniformly stable for $n\geq m(p+q+\ell-1)$. Similarly, the inductive hypothesis implies that the domain of $\partial_r^{p,q}(n)$ is uniformly stable for $n\geq m(p+q+\ell)$ and monotone for $n\geq m(p+q+\ell-1)$, and its codomain is monotone for $n\geq m(p+q+\ell)$. Applying Proposition~\ref{prop:monkerim}, the kernel $\{\ker\partial_{r}^{p,q}(n)\}$ is monotone and uniformly stable for $n\geq m(p+q+\ell)$. However since $\ker\partial_r^{p,q}(n)$ is contained in $E_r^{p,q}(n)$, we actually have the stronger statement that $\{\ker\partial_r^{p,q}(n)\}$ is monotone for $n\geq m(p+q+\ell-1)$.

Since $\{\ker\partial_r^{p,q}(n)\}$ and $\{\im\partial_r^{p-r,q+r-1}(n)\}$ are both monotone for $n\geq m(p+q+\ell-1)$, and $\{\im\partial_r^{p-r,q+r-1}(n)\}$ is uniformly stable for $n\geq m(p+q+\ell-1)$, Proposition~\ref{prop:monquot} implies that their quotient $\{E_{r+1}^{p,q}(n)\}$ is monotone for $n\geq m(p+q+\ell-1)$. Finally,  $\{\ker\partial_r^{p,q}(n)\}$ and $\{\im\partial_r^{p-r,q+r-1}(n)\}$ are both uniformly stable for $n\geq m(p+q+\ell)$, so Proposition~\ref{prop:unifsum} implies that $\{E_{r+1}^{p,q}(n)\}$ is uniformly stable for $n\geq m(p+q+\ell)$.
\end{proof}

\subsection{Stability and monotonicity of $H^i(C_n(M))$ (Step 4)}
There is a natural filtration of $H^i(C_n(M))$ by subspaces \[0<F^i_i(n)<F^i_{i-1}(n)<\cdots<F^i_1(n)<F^i_0(n)=H^i(C_n(M))\] so that the successive quotients $F^i_p(n)/F^i_{p+1}(n)$ are isomorphic to $E_\infty^{p,i-p}(n)$. The naturality of this filtration implies that it is preserved by the action of $S_n$, and the induced action on the quotient $F^i_p(n)/F^i_{p+1}(n)$ makes it isomorphic to $E_\infty^{p,i-p}(n)$ as an $S_n$--representation. The filtration is also preserved by the map $H^i(C_n(M))\to H^i(C_{n+1}(M))$, and the induced map $E_\infty^{p,i-p}(n)\to E_\infty^{p,i-p}(n)$ agrees with the map induced from $E_2^{p,i-p}(n)\to E_2^{p,i-p}(n)$.\pagebreak

Since the Leray spectral sequence is a first-quadrant spectral sequence, for each $p\geq 0$ and $i\geq 0$ we have $E_\infty^{p,i-p}=E_r^{p,i-p}$ for some finite $r$, so Step 3 implies that $\{E_\infty^{p,i-p}(n)\}$ is monotone and uniformly stable for $n\geq 2(p+(i-p))=2i$ if $\dim M\geq 3$ (resp. for $n\geq 4(p+(i+p))=4i$ if $\dim M=2$). We prove by reverse induction on $p$ that $\{F^i_p(n)\}$ is monotone and uniformly stable for $n\geq 2i$ (resp. $n\geq 4i$). If we assume that $\{F^i_{p+1}(n)\}$ is monotone and uniformly stable for $n\geq 2i$ (resp. $n\geq 4i$), we also have that $\{F^i_p(n)/F^i_{p+1}(n)=E_\infty^{p,i-p}(n)\}$ is monotone and uniformly stable for $n\geq 2i$ (resp. $n\geq 4i$). Thus Proposition~\ref{prop:unifsum} and Proposition~\ref{prop:monfilt} imply that $\{F^i_p(n)\}$ is monotone and uniformly stable for $n\geq 2i$ (resp. $n\geq 4i$), as desired. Since $F^i_0(n)=H^i(C_n(M))$, this completes the proof of Theorem~\theoremmain.

\subsection{Stability of map $H^i(C_n(N))\to H^i(C_n(M))$ (Step 5)}
Any inclusion $\iota\colon M\hookrightarrow N$ induces an inclusion $C_n(M)\hookrightarrow C_n(N)$, which induces a map $\iota^*\colon H^i(C_n(N))\to H^i(C_n(M))$ for each $i$. By Theorem~\theoremmain, both $\{H^i(C_n(N))\}$ and $\{H^i(C_n(M))\}$ are monotone and uniformly representation stable. Applying Proposition~\ref{prop:monkerim} to the map $H^i(C_n(N))\to H^i(C_n(M))$, we obtain the following corollary.

\begin{thmtwo}
For any inclusion $\iota\colon M\hookrightarrow N$ of connected orientable manifolds of finite type inducing $\iota^*\colon H^i(C_n(N))\to H^i(C_n(M))$, the sequences $\{\ker \iota^*\}$ and $\{\im \iota^*\}$ are monotone and uniformly representation stable, with stable range $n\geq 2i$ if $\dim N\geq 3$ and $\dim M\geq 3$, and with stable range $n\geq 4i$ otherwise.
\end{thmtwo}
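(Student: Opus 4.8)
The plan is to obtain Corollary~\theoremtwo\ as a formal consequence of Theorem~\theoremmain\ together with Proposition~\ref{prop:monkerim}, so that essentially no new work is required beyond the main theorem.

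First I would set up the relevant consistent sequence of equivariant maps. The inclusion $\iota\colon M\hookrightarrow N$ acts coordinatewise to give an $S_n$--equivariant inclusion $C_n(M)\hookrightarrow C_n(N)$, and because this inclusion and the point-forgetting maps $C_{n+1}(-)\to C_n(-)$ are both defined coordinatewise, they commute. Applying cohomology yields, for each $i$, an $S_n$--equivariant restriction map $\iota^*\colon H^i(C_n(N))\to H^i(C_n(M))$, together with a commuting square of the form~\eqref{eq:fn} in which $V_n=H^i(C_n(N))$, $W_n=H^i(C_n(M))$, each $\phi_n$ is the stabilization map of the respective sequence, and $f_n=\iota^*$. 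Thus $\{\iota^*\}$ is a consistent sequence of $S_n$--equivariant maps of the kind required by Proposition~\ref{prop:monkerim}.

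Next I would invoke Theorem~\theoremmain\ twice. It tells us that $\{H^i(C_n(N))\}$ is monotone and uniformly representation stable for $n\geq 2i$ if $\dim N\geq 3$ and for $n\geq 4i$ if $\dim N=2$, and likewise for $\{H^i(C_n(M))\}$. Setting $N_0=2i$ when $\dim M\geq 3$ and $\dim N\geq 3$, and $N_0=4i$ otherwise, both sequences are monotone and uniformly representation stable for $n\geq N_0$; in particular $\{H^i(C_n(N))\}$ is uniformly multiplicity stable for $n\geq N_0$, so the hypotheses of Proposition~\ref{prop:monkerim} are satisfied with stable range $n\geq N_0$. Applying that proposition to $f_n=\iota^*$ shows that $\{\ker\iota^*\}$ and $\{\im\iota^*\}$ are monotone and uniformly multiplicity stable for $n\geq N_0$, and Proposition~\ref{prop:monsurj} then upgrades uniform multiplicity stability to uniform representation stability. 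This is exactly the assertion of Corollary~\theoremtwo, with the claimed stable range.

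I do not expect a genuine obstacle here: all of the difficulty has already been absorbed into Theorem~\theoremmain\ and the abstract machinery of Section~\ref{section:repstability}. The only points needing any care are checking on the nose that $\iota^*$ commutes with the stabilization maps (a routine diagram chase, since every map involved is defined coordinatewise on points), and bookkeeping the two possibly different stable ranges coming from $M$ and $N$ --- which is precisely why the statement splits into the cases $n\geq 2i$ (when both manifolds have dimension at least $3$) and $n\geq 4i$ (otherwise).
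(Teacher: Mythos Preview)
Your proposal is correct and follows essentially the same approach as the paper: invoke Theorem~\theoremmain\ for both $\{H^i(C_n(N))\}$ and $\{H^i(C_n(M))\}$, then apply Proposition~\ref{prop:monkerim} to the equivariant map $\iota^*$. Your explicit verification that $\iota^*$ commutes with the stabilization maps and your bookkeeping of the stable range are slightly more detailed than the paper, but the argument is the same.
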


\begin{remark}
\label{remark:diffdim}
The corollary is most interesting when $M$ and $N$ have the same dimension. If their dimensions are different, then the map $H^*(C_n(N))\to H^*(C_n(M))$ can be completely determined, as we now describe.

For an inclusion $\R^d\hookrightarrow \R^e$ with $d\neq e$, the induced map $H^*(C_n(\R^e))\to H^*(C_n(\R^d))$ is trivial. To see this, note that $H^*(C_n(\R^e))$ is generated by $H^{e-1}(C_n(\R^e))$, which is spanned by the images of all the maps $H^{e-1}(C_2(\R^e))\to H^{e-1}(C_n(\R^e))$ given by the projections onto each two-element subset. For each such projection, the composition \[H^{e-1}(C_2(\R^e))\to H^{e-1}(C_n(\R^e))\to H^{e-1}(C_n(\R^d))\] can be factored instead as \[H^{e-1}(C_2(\R^e))\to H^{e-1}(C_2(\R^d))\to H^{e-1}(C_n(\R^d)).\] But if $d\neq e$, the middle group vanishes, because $C_2(\R^d)$ has cohomology only in degree $d-1$.

This implies in general that if $M$ and $N$ are of different dimensions, the map of $E_2$ pages \[E_2^{p,q}(C_n(N)\hookrightarrow N^n)\to E_2^{p,q}(C_n(M)\hookrightarrow M^n)\] vanishes for $q>0$. This follows from Totaro's description, since any such entry has a $H^*(C_\T(\R^d))$ factor. The map is nontrivial only on the bottom row $E_2^{p,0}$, where we have \[E_2^{p,0}(C_n(N)\hookrightarrow N^n)=H^p(N^n)\to H^p(M^n)=E_2^{p,0}(C_n(M)\hookrightarrow M^n).\] We conclude that the map $H^*(C_n(N))\to H^*(C_n(M))$ vanishes except on classes coming from the ambient space $N^n$, where it is the map induced by the restriction $H^*(N^n)\to H^*(M^n)$.
\end{remark}

\section{Unordered configuration spaces}
\label{section:unordered}
The transfer isomorphism \eqref{eq:transfer} implies that the dimension of  $H_i(B_n(M))$ is the multiplicity of the trivial representation $V(0)_n$ in $H^i(C_n(M))$. Thus it follows immediately from Theorem~\theoremmain\ that the homology $H_i(B_n(M))$ is eventually constant. Similarly it follows from Corollary~\theoremtwo\ that for an inclusion $M\hookrightarrow N$ the rank of $H_i(B_n(M))\to H_i(B_n(N))$ is eventually constant, essentially proving Corollary~\corollarytwo. To complete the proof of Corollary~\corollarymain, we must check that the map $H_i(B_{n+1}(M);\Q)\to H_i(B_n(M);\Q)$ induced by the correspondence $\pi_n\colon B_{n+1}(M)\rightrightarrows B_n(M)$ is an isomorphism.
 
We can also improve the stable range for $H_i(B_n(M))$ to $n\geq i+1$ by a more careful analysis.
In fact, if the low-dimensional Betti numbers of $M$ vanish, we can improve the stable range in Corollary~\corollarymain\ even further.
\begin{proposition}
\label{prop:improvebetti}
If $b_1(M)=\cdots=b_{k-1}(M)=0$ for some $k<\dim M$, then the stable range for $\{H^i(B_n(M);\Q)\}$ can be improved to $n\geq \frac{i}{k}+1$.
\end{proposition}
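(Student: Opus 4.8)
The plan is to track the single irreducible representation $V(0)_n$ (the trivial representation) through the Leray spectral sequence of $C_n(M)\hookrightarrow M^n$, following the method used to obtain the range $n\geq i+1$, but keeping much sharper track of cohomological degrees by using the hypothesis $H^1(M)=\cdots=H^{k-1}(M)=0$. By the transfer isomorphism \eqref{eq:transfer}, $\dim H^i(B_n(M);\Q)$ is the multiplicity of $V(0)_n$ in $H^i(C_n(M))$, so it is enough to show this multiplicity is constant for $n\geq\frac ik+1$. In the notation of the proof of Steps~1 and~2, write $E_2^{p,q(d-1)}(n)=\bigoplus E(\mu,r,\alpha)_n$ with $E(\mu,r,\alpha)_n=\Ind_{S_{k_0}\times S_{n-k_0}}^{S_n}V(\mu,r,\alpha)\boxtimes\Q$, where $k_0=q+\ell(\mu)+\ell(\alpha)$ is the integer denoted ``$k$'' there. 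By Frobenius reciprocity the multiplicity of $V(0)_n$ in $E(\mu,r,\alpha)_n$ equals the multiplicity of the trivial $S_{k_0}$-representation in $V(\mu,r,\alpha)$; in particular it is independent of $n$ for $n\geq k_0$, and is $0$ for $n<k_0$ since the summand is then empty. Hence the multiplicity of $V(0)_n$ in $E_2^{p,q(d-1)}(n)$ is already constant once $n\geq\max k_0$, the maximum being over those $(\mu,r,\alpha)$ for which $V(\mu,r,\alpha)$ contains the trivial representation.

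The heart of the matter is the claim that every such contributing triple with $p+q(d-1)=i$ satisfies $k_0\leq\frac ik+1$. Recall that $V(\mu,r,\alpha)$ is induced from a subspace of $H^{q(d-1)}(C_\T(\R^d))\otimes H^{(r,b)}(M^\T)$, and that the trivial $S_{k_0}$-representation can occur in it only if some K\"unneth summand is fixed, up to Koszul sign, by its stabilizer inside $S_{k_0}$. I would extract three consequences. First, the within-block subgroup $\prod_B S_{|B|}$ acts on $H^{\ttop}(C_{|B|}(\R^d))$ with no nonzero invariants whenever $|B|\geq 3$ (and also when $|B|=2$ and $d$ is odd), since $H^{\ttop}(B_{|B|}(\R^d);\Q)=0$ in those cases, while it acts trivially on each diagonal factor $H^*(M^B)$; so a contributing summand has no block of size $\geq 3$, whence every block has size $2$ and $q=\ell(\mu)$, a number we denote $m$. (When $d$ is odd the same reasoning forbids size-$2$ blocks, so $q=0$ and $k_0=\ell(\alpha)\leq\frac ik$; so assume from now on that all blocks have size $2$.) Second, the transposition exchanging two undecorated size-$2$ blocks (i.e. blocks contributing only $H^0$ on their diagonals) multiplies the pair of their top classes by $(-1)^{d-1}$, so for $d$ even two undecorated size-$2$ blocks kill all invariants; thus at most one of the $m$ blocks is undecorated, and the block-diagonal $M$-degree satisfies $r\geq(m-1)k$. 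Third, since $H^j(M)=0$ for $1\leq j\leq k-1$, every $M$-class appearing on an active singleton has degree $\geq k$, so the singleton $M$-degree is $p-r\geq\ell(\alpha)\cdot k$, and each size-$2$ block contributes exactly $d-1\geq k$ through its $\R^d$-configuration factor, so $q(d-1)=m(d-1)\geq mk$. Adding up, $i=m(d-1)+r+(p-r)\geq mk+(m-1)k+\ell(\alpha)k=(2m+\ell(\alpha)-1)k=(k_0-1)k$, which is the claim.

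Granting the claim, each sequence $\{E_2^{p,i-p}(n)\}$ has constant $V(0)_n$-multiplicity for $n\geq\frac ik+1$ and, by Step~2, is monotone for $n\geq 1$. Running the inductive argument of Proposition~\ref{prop:inductivestep}---which uses only that the stable range is a non-decreasing function of $p+q$ and the monotonicity range is that function shifted down by one---applied to the single irreducible $V(0)_n$ via Proposition~\ref{prop:monsingle}, propagates constancy and monotonicity of the $V(0)_n$-multiplicity through every page to $E_\infty^{p,i-p}(n)$, and then the filtration of Step~4 carries it to $H^i(C_n(M))$. Thus $\dim H^i(B_n(M);\Q)$ is constant for $n\geq\frac ik+1$; and the correspondence argument used to prove Corollary~\corollarymain, once more through Proposition~\ref{prop:monsingle}, shows $(\pi_n)_*$ is surjective in this range, hence---being a surjection of vector spaces of equal dimension---an isomorphism.

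The step I expect to be the main obstacle is the second consequence above: pinning down exactly which configurations survive the sign constraints imposed by the $S_n$-action. This requires a careful accounting of the Koszul signs in the action of $\Stab(\cS)$ on the K\"unneth decomposition of $H^{\ttop}(C_\cS(\R^d))\otimes H^*(M^\cS)$, and in particular an argument that no clever choice of decorations on the block diagonals or on the singletons can rescue a configuration with two or more undecorated blocks. The first and third consequences are routine degree counting; it is this sign analysis, and not any new input from the spectral sequence, that makes the sharper range $n\geq\frac ik+1$ possible.
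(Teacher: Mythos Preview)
Your proposal is correct and follows essentially the same route as the paper. Both arguments reduce to showing that only partitions $\mu$ with all parts equal to $1$ (or $\mu=(0)$ when $d$ is odd) contribute a trivial summand to $E_2^{p,q(d-1)}$, and then exploit the sign of the $S_q$--action on the block factors to force $r\geq (q-1)k$, together with the degree bound $\geq k$ on each singleton factor coming from the hypothesis. The paper packages this via the explicit identity $(E(\mu,r)_n)^{S_n}=(H^r(M^q))^\epsilon\otimes(H^{p-r}(M^{n-2q}))^{S_{n-2q}}$ and Lemma~\ref{lem:elementary}, whereas you package the same three observations as a bound $k_0\leq i/k+1$ on the induction parameter; both then feed into Proposition~\ref{prop:inductivestep} with $m=1/k$ and $\ell=k$. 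Your anticipated ``main obstacle'' is exactly the paper's computation that $(H^r(M^q))^\epsilon=0$ for $r<k(q-1)$, and your informal ``at most one undecorated block'' argument is the correct reason: any $S_q$--antisymmetric combination of K\"unneth summands vanishes if two tensor factors lie in $H^0(M)$.
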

The assumption that $k<\dim M$ is not restrictive, since any manifold which satisfies this condition for some $k\geq \dim M$ is either rationally acyclic or a rational homology sphere. These cases are completely understood, and in particular in such cases $H^i(B_n(M);\Q)$ stabilizes immediately at $n\geq 2$ for all $i$ (see, e.g., F\'{e}lix--Thomas \cite{FT1}). 
\begin{proof}[Proof of Corollary~\corollarymain\ and Proposition~\ref{prop:improvebetti}.]
We begin by showing that monotonicity for the trivial representation in $\{H^i(C_n(M))\}$ implies that the map $\pi_n^*\colon H^i(B_n(M))\to H^i(B_{n+1}(M))$ is an isomorphism. The covering map $p\colon C_n(M)\to B_n(M)$ induces a correspondence $p^{-1}\colon B_{n+1}(M)\rightrightarrows C_{n+1}(M)$ taking a point to its preimage. This fits into the following diagram, where the top map $C_{n+1}(M)\to C_n(M)$ forgets the last point.
\begin{equation}
\label{eq:corrs}
\begin{array}{c}\xymatrix{
  C_{n+1}(M)\ar[r]&C_{n}(M)\ar^p[d]\\
  B_{n+1}(M)\ar[u]<-.5ex>\ar^{p^{-1}}[u]<.5ex>\ar_{\ \ \pi_n}[r]<-.5ex>\ar[r]<.5ex>&B_{n}(M)}
\end{array}\end{equation} This diagram commutes up to a constant, meaning that the composition of the top three maps coincides with the correspondence $n!\cdot \pi_n$. On cohomology, $p^*$ induces the identification $H^i(B_n(M))\approx (H^i(C_n(M)))^{S_n}$, while the top map induces the standard inclusion $H^i(C_n(M))\hookrightarrow H^i(C_{n+1}(M))$. The map $(p^{-1})^*\colon H^i(C_{n+1}(M))\to H^i(B_{n+1}(M))$ is obtained by averaging over the action of $S_{n+1}$, yielding the projection $H^i(C_{n+1}(M))\twoheadrightarrow (H^i(C_{n+1}(M)))^{S_{n+1}}\approx H^i(B_{n+1}(M))$.
But monotonicity for the trivial representation in $\{H^i(C_n(M))\}$ says exactly that the composition \[(H^i(C_n(M)))^{S_n}\hookrightarrow  H^i(C_{n+1}(M))\twoheadrightarrow (H^i(C_{n+1}(M)))^{S_{n+1}}\] is injective. Thus once the multiplicity of the trivial representation stabilizes, the map $\pi_n^*\colon H^i(B_n(M))\to H^i(B_{n+1}(M))$ is an isomorphism.\\

Our goal will be to prove that in Theorem~\theoremmain, if we are only interested in monotonicity and stability for the multiplicity of the trivial representation in $H^i(C_n(M))$, it suffices to take $n\geq i+1$. We will do this by showing that the multiplicity of the trivial representation in $E_2^{p,q}(n)$ is stable for $n\geq p+q+1$ and monotone for $n\geq 1$. Indeed, we noted in Proposition~\ref{prop:monsingle} that monotonicity can be applied to the multiplicity $c_0(V_n)$ of the trivial representation $V(0)_n$ in $V_n$, so applying Proposition~\ref{prop:inductivestep} with $m=1$ and $\ell=1$ we conclude that $c_0(E_{r+1}^{p,q}(n))$ is constant for $n\geq p+q+1$ and monotone for $n\geq p+q$, which implies the theorem as in Step 4 of Theorem~\theoremmain.

Similarly, to prove Proposition~\ref{prop:improvebetti}, we will show that when $\dim M$ is odd, $c_0(E_2^{p,q}(n))$ is constant and monotone for $n\geq \big\lfloor\frac{p}{k}\big\rfloor$. In this case the spectral sequence degenerates immediately so the proposition follows. When $\dim M$ is even, we will prove that $c_0(E_2^{p,q(d-1)}(n))$ is constant for $n\geq \big\lfloor\frac{p}{k}\big\rfloor+q+1$ (we know it is monotone for all $n\geq 1$). Since $k\leq d-1$ by assumption, this quantity for $c_0(E_2^{p,q}(n))$ is bounded above by $\frac{p}{k}+\frac{q}{d-1}+1\leq\frac{p+q}{k}+1=\frac{1}{k}(p+q+k)$. We may thus apply Proposition~\ref{prop:inductivestep} with $m=\frac{1}{k}$ and $\ell=k$ to conclude that $c_0(E_{r+1}^{p,q}(n))$ is constant for $n\geq \frac{p+q}{k}+1$ and monotone for $n\geq \frac{p+q}{k}$, which implies that $c_0(H^i(B_n(M)))$ is constant and monotone for $n\geq \frac{i}{k}+1$ as desired.\\

To establish the desired stable range for $E_2^{p,q}(n)$, we analyze the summands $E(\mu)_n$; we will see that only certain components $E(\mu)_n$ can contribute a trivial subrepresentation to $E_2^{p,q}(n)$.
Recall that \[E(\mu)_n=\bigoplus_{\substack{\cS\text{ with}\\\overline{\cS}=\mu\langle n\rangle}} H^{q(d-1)}(C_\cS(\R^d))\otimes H^p(M^\cS).\]
Just as we previously wrote $E(\mu,r,\alpha)_n$ as an induced representation, we can choose a partition $\T$ of $\{1,\ldots,n\}$ with $\overline{\T}=\mu\langle n\rangle$ and write
\[E(\mu)_n=\Ind_{\Stab(\T)}^{S_n}H^{q(d-1)}(C_\T(\R^d))\otimes H^p(M^\T).\] Since $(\Ind_H^G V)^G\approx V^H,$ we only need to understand for which $\mu$ the $\Stab(\T)$--representation \[H(\T)\coloneq H^{q(d-1)}(C_\T(\R^d))\otimes H^p(M^\T)\] contains a trivial subrepresentation, and for what $n$ the corresponding $E(\mu)_n$ first appears.

If $\mu$ is nontrivial, meaning that for some entry $m$ of $\mu\langle n\rangle$ we have $m\geq 2$, let $T\subset \{1,\ldots,n\}$ be the corresponding element of $\T$ with $|T|=m$. By $S_m<S_n$ we mean the permutations which are the identity except on $T$. In most cases $S_m$, which is contained in $\Stab(\T)$, does not act trivially on $H(\T)$. By definition $M^\T$ is the subspace of $M^n$ constant on each element of $\T$, and in particular constant on those factors corresponding to $T$. It follows that $S_m$ acts trivially on $M^\T$, and thus also on its cohomology. The other factor is the top cohomology of $C_\T(\R^d)$, which splits as a product of smaller configuration spaces; one factor is $C_T(\R^d)=C_m(\R^d)$, and the other factors are fixed by $S_m$. Thus the top cohomology $H^{q(d-1)}(C_\T(\R^d))$ splits as a tensor product of $H^{(m-1)(d-1)}(C_m(\R^d))$ with factors on which $S_m$ acts trivially.

First, assume that $d$ is odd. In this case $H^{(m-1)(d-1)}(C_m(\R^d))$ is known to be isomorphic to $H^{2(m-1)}(C_m(\R^3))$, which never contains a trivial subrepresentation for any $m\geq 2$ \cite[Theorem 3.1]{Le}. This shows that $c_0(E(\mu)_n)=0$ unless $\mu=(0)$, in which case $q=0$. Thus when $\dim M$ is odd, only the row $(E_2^{p,0})^{S_n}$ is nonzero, so the spectral sequence degenerates immediately and $H^*(B_n(M))\approx (H^*(M)^{\otimes n})^{S_n}$. In particular, as $n\to \infty$, the cohomology $H^*(B_n(M))$ converges (degree by degree) to the free graded-commutative algebra $\Q[H^*(M)]=\Sym^* H^{\even}(M)\otimes \bwedge^* H^{\odd}(M)$. The Betti numbers of $B_n(M)$ for $\dim M$ odd were first computed by B\"odigheimer--Cohen--Taylor \cite{BCT}.

Corollary~\corollarymain\ and Proposition~\ref{prop:improvebetti} for $\dim M$ odd now follow immediately by applying the following elementary lemma to $V=H^*(M)$.
\begin{lemma}\label{lem:elementary}If $V$ is a graded vector space with $V^{(0)}=\Q$ and $V^{(1)}=\cdots=V^{(k-1)}=0$, then the degree--$p$ part of the invariants $(V^{\otimes n})^{S_n}$ stabilizes once $n\geq \big\lfloor\frac{p}{k}\big\rfloor$.
\end{lemma}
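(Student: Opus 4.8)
The plan is to package all the invariant subspaces into a single bigraded algebra and read the stabilization off a degree count. In characteristic $0$ there is a canonical isomorphism $\bigoplus_{n\geq 0}(V^{\otimes n})^{S_n}\cong\GSym(V)$, where $\GSym(V)=\Sym^*(V^{\even})\otimes\bwedge^*(V^{\odd})$ is the free graded-commutative algebra on $V$, bigraded by the \emph{weight} $n$ and by the cohomological degree inherited from $V$. Under this isomorphism the stabilization map $(V^{\otimes n})^{S_n}\to(V^{\otimes(n+1)})^{S_{n+1}}$ relevant in the proof of Corollary~\corollarymain\ --- the inclusion $x\mapsto x\otimes t$, where $t$ spans $V^{(0)}=\Q$, followed by the averaging projection onto $S_{n+1}$-invariants --- coincides, up to a positive scalar, with multiplication by $t$ viewed as a weight-$1$, degree-$0$ element of $\GSym(V)$ (note that $t$ is \emph{not} the unit of $\GSym(V)$, which lies in weight $0$). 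So the lemma becomes: multiplication by $t$ is an isomorphism on the degree-$p$ part once the weight is at least $\lfloor p/k\rfloor$.

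To prove this, split $V=\Q t\oplus B$ with $B\coloneq\bigoplus_{j\geq k}V^{(j)}$; the hypotheses $V^{(0)}=\Q$ and $V^{(1)}=\cdots=V^{(k-1)}=0$ say exactly that this is a direct sum with $B$ concentrated in degrees $\geq k$. Since $\GSym$ sends direct sums to tensor products and $\GSym(\Q t)=\Q[t]$ (as $t$ has even degree $0$), we get $\GSym(V)=\Q[t]\otimes\GSym(B)$, whose weight-$n$ piece is $\bigoplus_{j=0}^{n}t^{\,n-j}\otimes\GSym(B)_{[j]}$ with $\GSym(B)_{[j]}$ the weight-$j$ part of $\GSym(B)$. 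The crucial observation is that a weight-$j$ element of $\GSym(B)$ is a combination of products of $j$ homogeneous classes of $B$, each of degree $\geq k$, hence has degree $\geq jk$; so $\GSym(B)_{[j]}$ has nothing in degree $p$ once $j>p/k$, i.e.{} once $j>\lfloor p/k\rfloor$.

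Writing $r\coloneq\lfloor p/k\rfloor$, it follows that the degree-$p$ part of the weight-$n$ piece of $\GSym(V)$ is $\bigoplus_{j=0}^{\min(n,r)}t^{\,n-j}\otimes\GSym(B)_{[j]}^{(p)}$. For $n\geq r$ the summation range $0\leq j\leq r$ is independent of $n$, and multiplication by $t$ acts as the identity on each $\GSym(B)_{[j]}^{(p)}$ and sends $t^{\,n-j}$ to $t^{\,n+1-j}$, hence is an isomorphism from the degree-$p$ weight-$n$ piece onto the degree-$p$ weight-$(n+1)$ piece. This is precisely the assertion that the degree-$p$ part of $(V^{\otimes n})^{S_n}$ has stabilized for $n\geq r=\lfloor p/k\rfloor$.

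I do not expect a real obstacle: the lemma is elementary, as advertised. The one point deserving a word of care is the compatibility with the Koszul sign rule present in the $S_n$-action on $H^*(M)^{\otimes n}$: because the class $t$ adjoined at each stage lies in degree $0$, tensoring with it and symmetrizing introduces no signs, so $\GSym(\Q t)$ genuinely is the polynomial ring $\Q[t]$ and the computation runs exactly as in the ungraded setting. I would note this once and proceed.
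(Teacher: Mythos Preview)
Your proof is correct and is essentially the same argument as the paper's. Both rest on the observation that $(V^{\otimes n})^{S_n}$ decomposes according to how many tensor factors come from $V^{(0)}=\Q$ versus from degrees $\geq k$, and that in degree $p$ at most $\lfloor p/k\rfloor$ factors can lie in the latter; your packaging via $\GSym(V)\cong\Q[t]\otimes\GSym(B)$ with stabilization given by multiplication by $t$ is exactly the paper's combinatorial bijection $a_0\mapsto a_0+1$ phrased in algebraic language.
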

\begin{proof}Indeed for any graded vector space $V$ we have \[(V^{\otimes n})^{S_n}=\bigoplus_{a_0+a_1+a_2\cdots=n}\Sym^{a_0}V^{(0)}\otimes \Sym^{a_2}V^{(2)}\otimes \cdots\otimes \bwedge^{a_1}V^{(1)}\otimes \bwedge^{a_3}V^{(3)}\otimes \cdots\]
where the degree--$p$ part consists of those summands where $a_1+2a_2+3a_3+\cdots=p$. If $V^{(0)}=\Q$, by replacing $a_0$ with $a_0+1$ we can identify each such summand with an isomorphic summand of $(V^{\otimes n+1})^{S_{n+1}}$. Thus stabilization occurs once $n$ is large enough that $a_0$ is necessarily positive for all summands of $(V^{\otimes n+1})^{S_{n+1}}$. In general this happens once $n+1>p$; the last summand to appear is $\bwedge^p V^{(1)}$ with $a_1=p$. But the assumption that $V^{(1)}=\cdots=V^{(k-1)}=0$ means that any summand with $a_i>0$ for $1\leq i< k$ vanishes. Thus we only need to consider $ka_k+(k+1)a_{k+1}+\cdots=p$. Once $n+1>\frac{p}{k}$, it follows from $a_0+a_k+a_{k+1}+\cdots=n+1$ that $a_0>0$, so stabilization occurs once $n\geq \big\lfloor\frac{p}{k}\big\rfloor$ as claimed.
\end{proof}

Now consider the case that $d$ is even and $m\geq 3$. In this case it is known  (see e.g. Lehrer \cite[Equation 2.1]{Le}) that  $H^{(m-1)(d-1)}(C_m(\R^d))$ is isomorphic as an $S_m$--representation to $H^{m-1}(C_m(\R^2))$.
By Stanley \cite[Theorem 7.3]{St} or Lehrer--Solomon \cite[Theorem 3.9]{LS}, $H^{m-1}(C_m(\R^2))$ is isomorphic to $\Ind_{\langle(1\cdots m)\rangle}^{S_m}\epsilon\zeta$, where $\zeta$ is a faithful 1--dimensional representation and $\epsilon$ is the sign representation. Note that for $m\geq 3$, $\epsilon\zeta$ is nontrivial, so  $H^{m-1}(C_m(\R^d))$ contains no trivial subrepresentation.

It thus remains only to consider the case when $d$ is even and $\mu=(1,\ldots,1)\vdash q$.
Let $E(\mu,r)_n=\bigoplus_\alpha E(\mu,r,\alpha)_n$, and let $\T$ be the partition \[\T=\{\{1,q+1\},\ldots,\{q,2q\},\{2q+1\},\ldots,\{n\}\}.\]
 If $S_q<S_n$ is the subgroup inducing the same permutation on $\{1,\ldots,q\}$ as on ${\{q+1,\ldots,2q\}}$ and the identity elsewhere, we have $\Stab(\T)=S_2\wr S_q \times S_{n-2q}$. We have \[E(\mu,r)_n=\Ind^{S_n}_{\Stab(\T)}H(\T,r)\] where \[H(\T,r)\coloneq H^{q(d-1)}(C_\T(\R^d))\otimes H^{r}(M^{\T_{\geq 2}})\otimes H^{p-r}(M^{\T_1}).\] The subgroup $S_{n-2q}$ acts trivially on the first two factors, and the subgroup $S_2\wr S_q$ acts trivially on the last factor, so we can write the invariants as
\begin{equation}
\label{eq:HTr}
(H(\T,r))^{\Stab(\T)}= \big(H^{q(d-1)}(C_\T(\R^d))\otimes H^{r}(M^{\T_{\geq 2}})\big)^{S_2\wr S_q}\ \otimes\ \big(H^{p-r}(M^{\T_1})\big)^{S_{n-2q}}.
\end{equation}
We have $H^{q(d-1)}(C_\T(\R^d))\approx[H^{d-1}(C_2(\R^d))]^{\otimes q}$. Since $d$ is even, each $S_2$ factor acts trivially on this one-dimensional representation, while since $d-1$ is odd, the subgroup $S_q$ acts by the sign representation. Thus the $S_2\wr S_q$--invariants are the subspace of $H^{r}(M^{\T_{\geq 2}})=H^r(M^q)$ where each $S_2$ acts trivially and $S_q$ acts by the sign representation. Since each $S_2$ acts trivially all of on $H^*(M^{\T_{\geq 2}})$ the first condition is automatic, and we obtain: \begin{equation}\label{eq:Emurn}(E(\mu,r)_n)^{S_n}=(H^r(M^q))^\epsilon\otimes (H^{p-r}(M^{n-2q}))^{S_{n-2q}}\end{equation}
We point out that the first factor $(H^r(M^q))^\epsilon$ vanishes unless $r\geq kq-k$.  Indeed we have \[(H^*(M^q))^\epsilon=\bigoplus \bwedge^a H^{\even}(M)\otimes \Sym^b H^{\odd}(M).\]
The lowest degree term is either $\bwedge^1 H^0(M)\otimes \bwedge^{q-1}H^k(M)$ or $\bwedge^1 H^0(M)\otimes \Sym^{q-1}H^k(M)$, depending on whether $k$ is even or odd; in either case this term has degree $kq-k$.

By Lemma~\ref{lem:elementary}, the second factor of \eqref{eq:Emurn} stabilizes once $n-2q\geq \big\lfloor\frac{p-r}{k}\big\rfloor$.
Using our observation on the vanishing of $(H^r(M^q))^\epsilon$, for all nonzero summands $(E(\mu,r)_n)^{S_n}$ we have \[\big\lfloor\frac{p-r}{k}\big\rfloor\leq \big\lfloor\frac{p-kq+k}{k}\big\rfloor=\big\lfloor\frac{p}{k}\big\rfloor-q+1,\] so stabilization for $(E(\mu)_n)^{S_n}$ occurs once $n-2q\geq \big\lfloor\frac{p}{k}\big\rfloor-q+1$, or once $n\geq \big\lfloor\frac{p}{k}\big\rfloor+q+1$, as desired for Proposition~\ref{prop:improvebetti}. In particular, taking $k=1$ we obtain the desired stable range of $n\geq p+q+1$ for Corollary~\corollarymain.
We remark that the Betti numbers of $B_n(M)$ for $\dim M$ even have been determined by F\'{e}lix--Thomas \cite{FT1}.
\end{proof}

\begin{proof}[Proof of Theorem~\theoremvw]
We  deduce Theorem~\theoremvw\ from Theorem~\theoremmain.
Let $S_{n,\mu}$ denote the Young subgroup $S_{\mu_1}\times \cdots\times S_{\mu_k}\times S_{n-|\mu|}$. 
By the transfer isomorphism, the rational cohomology $H^i(B_{n,\mu}(M))$ is isomorphic to the $S_{n,\mu}$--invariants $(H^i(C_n(M)))^{S_{n,\mu}}$. By Theorem~\theoremmain, the multiplicity of the $S_n$--irreducible representation $V(\lambda)_n$ in $H^i(C_n(M))$ is constant for $n\geq 2i$ if $\dim M\geq 3$, or for $n\geq 4i$ if $\dim M=2$. We prove below in Lemma~\ref{lem:vw} that the multiplicity of the trivial $S_{n,\mu}$--representation in the $S_n$--irreducible representation $V(\lambda)_n$ is independent of $n$ once $n\geq 2|\mu|$. Thus once $n\geq \max(2i,2|\mu|)$ (or once $n\geq \max(4i,2|\mu|)$ if $\dim M=2$), the total multiplicity of the $S_{n,\mu}$--trivial representation in $H^i(C_n(M))$ is constant, as desired.
\end{proof}

\section{Monotonicity and induced representations}
\label{section:final}
The goal of this section is to prove the following theorem:
\begin{theorem:tech}
If $V$ is a fixed representation of $S_k$, the sequence of induced representations $\{\Ind_{S_k\times S_{n-k}}^{S_n} V\boxtimes \Q\}$ is monotone for $n\geq k$ and uniformly representation stable for $n\geq 2k$.
\end{theorem:tech}

There is another approach to Theorem~\ref{theorem:pieri}  by applying Schur--Weyl duality and working instead with representations of $\GL_n\Q$. This is carried out in Lemma 1.6 of the recent paper Sam--Weyman \cite{SW}.

\subsection{The branching rule}
\label{sec:branchingrule}
Given an irreducible $S_k$--representation $V_\lambda$, one can ask how the induced representation $\Ind_{S_k\times S_{n-k}}^{S_n} V_\lambda\boxtimes \Q$ decomposes into $S_n$--irreducibles. The answer is well-known, and is given by the \emph{branching rule}. The \emph{Young diagram} $Y_\lambda$ associated to a partition $\lambda\vdash k$ is an arrangement of $k$ boxes with $\lambda_1$ boxes in the first row, $\lambda_2$ boxes in the second row, and so on. Young diagrams serve as a very useful mental aid for many combinatorial constructions involved in the representation theory of the symmetric groups.

 For two partitions $\lambda\vdash k$ and $\mu\vdash n$, we write $\lambda\leadsto\mu$ if the Young diagram $Y_{\mu}$ is obtained from $Y_\lambda$ by adding $n-k$ boxes, with no two boxes added in the same column.
\begin{proposition}[Branching rule]
\label{prop:branching} Given a partition $\lambda\vdash k$, the induced representation $\Ind_{S_k\times S_{n-k}}^{S_n} V_\lambda\boxtimes \Q$ decomposes into irreducibles as
\begin{equation}
\label{eq:branching}
\Ind_{S_k\times S_{n-k}}^{S_n} V_\lambda\boxtimes \Q=\bigoplus_{\substack{\mu_i\vdash n\\\lambda\leadsto \mu_i}} V_{\mu_i}.
\end{equation}
\end{proposition}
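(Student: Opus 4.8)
The plan is to recognize Proposition~\ref{prop:branching} as the classical Pieri rule and to prove it by passing to symmetric functions, keeping in mind that the \emph{explicit} form of the decomposition is what will be needed downstream in the proof of Theorem~\ref{theorem:pieri}.

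The first step is to translate the statement using the Frobenius characteristic map $\mathrm{ch}$, which carries an $S_n$-representation to a degree-$n$ symmetric function, sends $V_\mu$ to the Schur function $s_\mu$, sends the trivial representation of $S_m$ to the complete homogeneous symmetric function $h_m$, and turns induction of an external tensor product into ordinary multiplication, $\mathrm{ch}\big(\Ind_{S_k\times S_m}^{S_{k+m}}A\boxtimes B\big)=\mathrm{ch}(A)\,\mathrm{ch}(B)$. Since $\mathrm{ch}$ is an isometry for the Hall pairing, the left side of \eqref{eq:branching} has characteristic $s_\lambda\,h_{n-k}$, and the proposition becomes the Pieri identity $s_\lambda\,h_m=\sum_\mu s_\mu$, the sum over $\mu\vdash|\lambda|+m$ with $\mu/\lambda$ a horizontal strip — which is precisely the relation $\lambda\leadsto\mu$, i.e.\ ``no two added boxes in the same column.''

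The second step is to prove Pieri by computing $\langle s_\lambda h_m,\,s_\mu\rangle$. Multiplication by $s_\lambda$ is adjoint to the skewing operation $s_\mu\mapsto s_{\mu/\lambda}$, so this equals $\langle h_m,\,s_{\mu/\lambda}\rangle$; and since $\{h_\nu\}$ is the dual basis to the monomial symmetric functions $\{m_\nu\}$, this in turn is the coefficient of $m_{(m)}$ in $s_{\mu/\lambda}$, i.e.\ the number of column-strict fillings of the skew shape $\mu/\lambda$ with every entry equal to $1$. Such a filling exists — and is then unique — exactly when no column of $\mu/\lambda$ contains two boxes, that is, when $\mu/\lambda$ is a horizontal strip. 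Hence $\langle s_\lambda h_m,s_\mu\rangle$ is $1$ when $\lambda\leadsto\mu$ and $0$ otherwise, and transporting back through $\mathrm{ch}$ gives that $V_\mu$ occurs in $\Ind_{S_k\times S_{n-k}}^{S_n}V_\lambda\boxtimes\Q$ with multiplicity $1$ for each $\mu$ with $\lambda\leadsto\mu$ and with multiplicity $0$ otherwise, which is exactly \eqref{eq:branching}.

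The only genuine obstacle is this combinatorial core — pinning the multiplicity down to exactly $0$ or $1$ over horizontal strips — and it can be reached by several equivalent routes: the dual-bases computation just sketched, an explicit RSK-type bijection on pairs (semistandard tableau of shape $\lambda$, weakly increasing word of length $m$), or, staying entirely within representation theory, Frobenius reciprocity together with induction on $m$ using the one-box branching rule $\mathrm{Res}^{S_N}_{S_{N-1}}V_\mu=\bigoplus V_\nu$ (sum over $\nu$ obtained by deleting one box) and Young's rule for the permutation module $M^{(\lambda_1,\dots,\lambda_\ell,\,m)}$. I would in fact favor this last, Specht-module version: exhibiting each $V_\mu$ with $\lambda\leadsto\mu$ as a concrete subrepresentation spanned by explicit polytabloids is the kind of hands-on description of the decomposition that the proof of Theorem~\ref{theorem:pieri}, and hence the monotonicity in Steps 1 and 2, ultimately rests on.
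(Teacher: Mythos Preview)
Your proof is correct: this is indeed Pieri's rule, and the symmetric-function argument via the Frobenius characteristic and the Hall pairing is one of the standard proofs. The paper does not give its own proof of Proposition~\ref{prop:branching} at all; it simply cites standard references (character theory as in Fulton--Harris \cite[A.7]{FH} and Fulton \cite[Corollary 7.3.3]{Fu}, or Frobenius reciprocity as in Sagan \cite[Theorem 2.8.3]{Sa}), and your approach is essentially the first of these.

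Your closing remark is well taken: the paper explicitly notes that these standard proofs give only the multiplicities, whereas what is needed for Theorem~\ref{theorem:pieri} is an explicit description of the summands inside $I_n(V_\lambda)$. The paper then develops this separately in Section~\ref{section:final}, constructing for each $\mu$ with $\lambda\leadsto\mu$ an explicit subrepresentation $W^\mu$ spanned by polytabloid-like elements $w_\T$, together with maps $\pi_\mu\colon I_n(M^\lambda)\to M^\mu$ that detect the $V_\mu$ summand. So while your Pieri proof is fine for the proposition as stated, the ``Specht-module version'' you gesture at in your last paragraph is the real work, and the paper carries it out in detail rather than folding it into the proof of the branching rule itself.
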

(The name ``branching rule'' is sometimes reserved for the case when $n=k+1$, with the general case known as Pieri's rule. Both are special cases of the Littlewoood--Richardson rule.) We begin with an application of the branching rule.
\begin{lemma}
\label{lem:vw}
For $n\geq |\mu|$, let $S_{n,\mu}=S_{\mu_1}\times\cdots S_{\mu_k}\times S_{n-|\mu|}$. Then for fixed $\lambda$, the multiplicity of the trivial $S_{n,\mu}$--representation in the restriction of the irreducible $S_n$--representation $V(\lambda)_n$ is independent of $n$ for $n\geq 2|\mu|$.
\end{lemma}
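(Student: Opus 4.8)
The plan is to reduce the statement to a stability property of multiplicities that can be read off from the branching rule (Proposition~\ref{prop:branching}), applied iteratively. By Frobenius reciprocity, the multiplicity of the trivial $S_{n,\mu}$--representation in $V(\lambda)_n$ equals the multiplicity of $V(\lambda)_n$ inside $\Ind_{S_{n,\mu}}^{S_n} \Q = \Ind_{S_{\mu_1}\times\cdots\times S_{\mu_k}\times S_{n-|\mu|}}^{S_n} \Q\boxtimes\cdots\boxtimes\Q$. The point is that this induced representation is built by repeatedly inducing from a trivial representation on a product with a large symmetric-group factor, so its decomposition into $S_n$--irreducibles is governed by iterated applications of Pieri's rule (the general branching rule in Proposition~\ref{prop:branching}): one adds a horizontal strip of $\mu_1$ boxes, then a horizontal strip of $\mu_2$ boxes, and so on, with the remaining $n-|\mu|$ boxes forming the first row.

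The key step is then a bookkeeping observation about how $V(\lambda)_n$ sits among all partitions of $n$ reachable by this process. Recall that $\lambda$ is a fixed partition of some size $j=|\lambda|$, and $\lambda[n]=(n-j,\lambda_1,\ldots,\lambda_\ell)$. I would argue that for $n\geq 2|\mu|$, every partition $\nu\vdash |\mu|$ whose Young diagram can be built up by successively adding horizontal strips of sizes $\mu_1,\ldots,\mu_k$ with at most $|\mu|$ rows involved gives rise, after prepending a long enough first row, to a contribution to $V(\lambda)_n$ determined solely by the shape $\lambda$ ``below the first row.'' Concretely, one should check that the reachable partitions $\mu_i\vdash n$ with $\lambda[?]\leadsto \cdots \leadsto \mu_i$ stabilize: once the first row $n-j$ is longer than $|\mu|$, adding a horizontal strip never interacts with rows $2,3,\ldots$ in a way that depends on $n$, only on how many boxes land in the first row versus the lower rows. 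Thus the count of ways to realize $\lambda[n]$ is the same for all $n\geq 2|\mu|$, since the threshold $n-j\geq |\mu|$, i.e.\ $n\geq j+|\mu|$, is implied by $n\geq 2|\mu|$ together with $j\leq |\mu|$ in the relevant range (and when $j>|\mu|$ the multiplicity is zero for all large $n$, hence trivially constant).

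The main obstacle I anticipate is making the ``stabilization of reachable shapes'' argument precise: one must show that the poset of intermediate partitions occurring in the iterated Pieri process, truncated to rows $2,3,\ldots,\ell(\lambda)+1$, is genuinely independent of $n$ once $n-j\geq |\mu|$, and that no cancellation or recombination among different $\mu_i$ breaks this. A clean way to handle this is to induct on $k$, the number of parts of $\mu$: set $\mu'=(\mu_1,\ldots,\mu_{k-1})$, apply the inductive hypothesis to $\Ind_{S_{n,\mu'}}^{S_n}\Q$ (whose decomposition into $V(\lambda')_n$ is stable for $n\geq 2|\mu'|$), and then observe that inducing up the last trivial factor $S_{\mu_k}$ amounts to one more Pieri step; the composition of a stable decomposition with a single Pieri step (which adds a horizontal strip of fixed size $\mu_k$, an operation whose effect on $V(\lambda')_n\mapsto \bigoplus V(\lambda)_n$ is visibly $n$--independent once the first row is long, i.e.\ once $n\geq |\mu'|+\mu_k+|\lambda'| = |\mu|+|\lambda'|$) is again stable, and $n\geq 2|\mu|$ suffices to cover all the $\lambda'$ that can appear. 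This reduces everything to the single-step case, where the independence of the branching coefficients on $n$ for $n$ large is transparent from the combinatorial description of $\leadsto$.
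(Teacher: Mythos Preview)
Your reduction via Frobenius reciprocity and iterated applications of the branching rule is exactly the paper's starting point: the multiplicity in question equals the number of chains of partitions $(\nu^0,\nu^1,\ldots,\nu^k)$ with $\nu^0=(n-|\mu|)$, $\nu^{i-1}\leadsto\nu^i$, $|\nu^i|-|\nu^{i-1}|=\mu_i$, and $\nu^k=\lambda[n]$. Your intuition that this count is $n$--independent once the first row is long enough is also the right one.

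However, your proposed induction on $k$ has a genuine gap. You write that one should ``apply the inductive hypothesis to $\Ind_{S_{n,\mu'}}^{S_n}\Q$'' and then that ``inducing up the last trivial factor $S_{\mu_k}$ amounts to one more Pieri step.'' But $\Ind_{S_{n,\mu}}^{S_n}\Q$ is not obtained from $\Ind_{S_{n,\mu'}}^{S_n}\Q$ by a Pieri step: both are already $S_n$--representations, and transitivity of induction gives
\[
\Ind_{S_{n,\mu}}^{S_n}\Q
=\Ind_{S_{n,\mu'}}^{S_n}\big(\Q\boxtimes\cdots\boxtimes\Q\boxtimes \Ind_{S_{\mu_k}\times S_{n-|\mu|}}^{S_{n-|\mu'|}}\Q\big),
\]
where the last factor is \emph{not} the trivial $S_{n-|\mu'|}$--representation. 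Thus the inductive hypothesis, which only concerns inducing the trivial representation from $S_{n,\mu'}$, does not apply. (One could try to strengthen the hypothesis, but then the bookkeeping for the stable range becomes exactly the issue you were trying to avoid.)

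The paper bypasses induction entirely with a direct bijection between chains for $n$ and chains for $n+1$: increase the first part of every $\nu^i$ by $1$. This visibly sends valid chains to valid chains. For the inverse, one observes that when $n\geq 2|\mu|$ each $\nu^i$ has first part at least $n+1-|\mu|>|\mu|$ while its second part is at most $|\mu|$, so decreasing every first part by $1$ also preserves validity. This two-line bijection is the whole argument.
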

\begin{proof}
Combining Frobenius reciprocity and repeated applications of the branching rule, the multiplicity in question is the number of sequences $(\nu^0,\nu^1,\ldots,\nu^k)$ of partitions satisfying:
\begin{itemize}
\item $\nu^0=(n-|\mu|)$, \item $|\nu^{i+1}|=|\nu^i|+\mu_{i+1}$ and $\nu^i\leadsto \nu^{i+1}$, and \item $\nu^k=\lambda[n]$.
\end{itemize} If we modify each $\nu^i$ by increasing its first entry by 1, we obtain a sequence $(\eta^0,\eta^1,\ldots,\eta^k)$ satisfying $\eta^0=(n+1-|\mu|)$, $|\eta^{i+1}|=|\eta^i|+\mu_{i+1}$ and $\eta^i\leadsto \eta^{i+1}$, and $\eta^k=\lambda[n+1]$. However, if $n\geq 2|\mu|$, the first row of each partition has length at least $n+1-|\mu|>\mu$, while the second row has length bounded by $\mu$. Thus once $n\geq 2|\mu|$, given any such sequence $(\eta^0,\eta^1,\ldots,\eta^k)$ we can decrease the first entry of each $\eta^i$ by 1 to recover a valid sequence $(\nu^0,\nu^1,\ldots,\nu^k)$. This bijection demonstrates that the multiplicity in question is constant for $n\geq 2|\mu|$, as desired.
\end{proof}

The branching rule and Pieri's rule can be found in any standard reference; the usual proofs are by character theory \cite[A.7]{FH}, \cite[Corollary 7.3.3]{Fu}  or using Frobenius reciprocity \cite[Theorem 2.8.3]{Sa}. These proofs show that the  irreducibles $V_{\mu_i}$ with $\lambda\leadsto \mu_i$ occur with multiplicity 1.
However, we will need an explicit description of the decomposition in \eqref{eq:branching}. We were not able to find this decomposition in the literature, so we describe it here. This requires first reviewing the explicit construction of irreducible $S_n$--representations by polytabloids.

\para{Tableaux, tabloids, and polytabloids}
Fix a partition $\lambda\vdash n$. A Young tableau, or simply tableau, of shape $\lambda$ is a bijective labeling of the boxes of the Young diagram $Y_\lambda$ by the numbers $\{1,\ldots,n\}$. A \emph{Young tabloid}, or simply \emph{tabloid}, of shape $\lambda$ is an equivalence class of tableaux under reordering of the numbers within each row. If $T$ is a tableau of shape $\lambda$, we denote by $\{T\}$ the corresponding tabloid. We denote by $M^\lambda$ the $\Q$-vector space with basis the tabloids of shape $\lambda$.

The irreducible representation $V_\lambda$ naturally sits inside $M^\lambda$, as follows. We associate to each tableau $T$ a certain element $v_T\in M^\lambda$ called a \emph{polytabloid}, and the span of the elements $v_T$ over all tableaux $T$ of shape $\lambda$ is the irreducible representation $V_\lambda$. The polytabloid $v_T$ is the sum of the tabloids of all tableaux obtained from $T$ by reordering within each column, with sign determined by the sign of the reordering. Formally, let $\ColStab(T)<S_n$ be the subgroup preserving the set of entries of each column of $T$. We write $qT$ for the action of $S_n$ on tableaux, and $\{qT\}$ for the associated tabloid. Then we have:
\begin{equation}
\label{eq:vT}
v_T\coloneq \sum_{q\in\ColStab(T)}(-1)^q \{qT\}
\end{equation}
 Note that if $T$ and $T'$ differ only by reordering within columns, the sum in the definition is over the same collection of tableaux and we have $v_T=\pm v_{T'}$.

When dealing later with induced representations, we will need the following definition. Given a partition $\lambda\vdash k$, define an \emph{$n$--pseudo-tableau $T$ of shape $\lambda$} to be a bijective labeling of the boxes of $Y_\lambda$ by some $k$--element subset $\supp(T)\subset\{1,\ldots,n\}$. Given such a pseudo-tableau, the associated pseudo-tabloid $\{T\}$ is the equivalence class of $T$ under reorderings of the numbers within each row.

\subsection{Decomposing induced representations}
Fix a partition $\lambda\vdash k$ with corresponding irreducible representation $V_\lambda$. Our goal in this section is to describe, for each irreducible representation $V_\mu$ contained in the $S_n$--representation $\Ind_{S_k\times S_{n-k}}^{S_n}V_\lambda\boxtimes \Q$, an explicit generator for a subrepresentation $W^\mu$ which contains $V_\mu$.

Since $V_\lambda$ is contained in $M^\lambda$, the induced representation $I_n(V_\lambda)=\Ind_{S_k\times S_{n-k}}^{S_n}V_\lambda\boxtimes \Q$ is naturally contained in $I_n(M^\lambda)=\Ind_{S_k\times S_{n-k}}^{S_n}M^\lambda\boxtimes \Q$. Since $M^\lambda$ has basis the tabloids of shape $\lambda$, the induced representation $I_n(M^\lambda)$ naturally has basis the $n$--pseudo-tabloids of shape $\lambda$. (This is easily checked using Criterion~\ref{crit:ind}.) Given an $n$--pseudo-tableau $T$, the associated polytabloid $v_T\in I_n(M^\lambda)$ is defined by the same formula \eqref{eq:vT}. By Criterion~\ref{crit:ind},  $I_n(V_\lambda)$ is exactly the subspace of $I_n(M^\lambda)$ spanned by $v_T$ for all $n$--pseudo-tableaux $T$.

\para{Defining the maps $\pi_\mu$} For each partition $\mu\vdash n$ with $\lambda\leadsto \mu$, there is a natural map \[\pi_\mu\colon I_n(M^\lambda)\to M^\mu\] defined on basis elements as follows. Informally, an $n$--pseudo-tableau of shape $\lambda$ is labeled by $k$ elements of $\{1,\ldots,n\}$, and the diagram $Y_\mu$ differs from $Y_\lambda$ by $n-k$ boxes. The map $\pi_\mu$ is defined by filling those $n-k$ boxes with the remaining $n-k$ elements of $\{1,\ldots,n\}$, and summing the resulting tableaux over all ways of doing do. Formally, let $\B=\{b_1,\ldots,b_{n-k}\}$ be the boxes present in $Y_\mu$ but not $Y_\lambda$.
Given an $n$--pseudo-tableau $T$ of shape $\lambda$, let $Y$ be the complement $\{1,\ldots,n\}\setminus \supp(T)$; we have $|Y|=n-k$. Let $\Bij(\B,Y)$ be the set of bijections $f\colon \B\to Y$. For $f\in \Bij(\B,Y)$, we define $T_f$ to be the tableau of shape $\mu$ obtained from $T$ by filling the box $b_i$ with the label $f(b_i)$. Define the map $\pi_\mu$ on tableaux by: \[\pi_\mu(T)=\sum_{f\in \Bij(\B,Y)}T_f\] Slightly abusing notation, we use the same notation for the induced map on tabloids $\pi_\mu\colon I_n(M^\lambda)\to M^\mu$ defined by $\pi_\mu(\{T\})=\{\pi_\mu(T)\}$. It is clear from the definition that the map $\pi_\mu$ respects the action of $S_n$.

It is easiest to illustrate this when $n=k+1$, since there is only one box to fill. For example, if $\lambda=\tiny\yng(3,2,1)$, in this case we might have: \[\mu_1=\yng(4,2,1)\quad\mu_2=\yng(3,3,1)\quad\mu_3=\yng(3,2,2)\quad\mu_4=\yng(3,2,1,1)\]
Taking for example the pseudo-tableau $T=\tiny\young(721,53,4)$,
 we have:
\[\pi_{\mu_1}(T)=\young(7216,53,4)\quad\ \pi_{\mu_2}(T)=\young(721,536,4)\quad\ \pi_{\mu_3}(T)=\young(721,53,46)\quad\ \pi_{\mu_4}(T)=\young(721,53,4,6)\]

\noindent\textbf{Notational remark}: In the remainder of the paper, we will always use $\T$ to denote a tableau of shape $\mu$ for some $\mu\vdash n$ with $\lambda\leadsto \mu$. 
We will use $T$ to denote an $n$--pseudo-tableau
of shape $\lambda$.

\para{Defining the subspaces $W^\mu$} If $\T$ is a tableau of shape $\mu$,  deleting the boxes $b_1,\ldots,b_{n-k}$ together with their labels yields an $n$--pseudo-tableau $\widetilde{\T}$ of shape $\lambda$. Note that this operation does not descend to tabloids. We associate to each tableau $\T$ of shape $\mu$ an element $w_\T$ of $I_n(M^\lambda)$ as follows:
\[w_\T\coloneq \sum_{q\in\ColStab(\T)}(-1)^q \{\widetilde{q\T}\}\]

As before, if $\T$ and $\T'$ differ by reordering within columns, we have $w_\T=\pm w_{\T'}$. We will see below that $w_\T$ is contained in $I_n(V_\lambda)=\spn \{v_T\}$. In the following examples, we have written $w_\T$ as a sum of polytabloids $v_T$ to illustrate this containment:
\begin{align*}
\T=\young(1234,56,7)&\qquad\qquad  w_\T
=v_{\tiny\young(123,56,7)}\\
\T=\young(123,456,7)\quad&\qquad\qquad  w_\T
=v_{\tiny\young(123,45,7)}-v_{\tiny\young(126,45,7)}\\
\T=\young(123,45,67)\quad&\qquad\qquad w_\T=v_{\tiny\young(123,45,6)}+v_{\tiny\young(173,42,6)}+v_{\tiny\young(153,47,6)}\\
\T=\young(123,45,6,7)\quad&\qquad\qquad w_\T=v_{\tiny\young(123,45,6)}-v_{\tiny\young(723,15,4)}+v_{\tiny\young(623,75,1)}-v_{\tiny\young(423,65,7)}
\end{align*}\pagebreak

\begin{definition}
We define $W^\mu<I_n(M^\lambda)$ to be the subspace spanned by the $w_{\T}$ over all tableaux $\T$ of shape $\mu$ (or equivalently, the $S_n$--span of any such $w_{\T}$, since any tableau is obtained from any other by a permutation).
\end{definition}

Let $\preceq$ be the lexicographic order on partitions of $n$, so that the partition $(n)$ is \emph{larger} than any other. The properties of $W^\mu$ that we will need are as follows.

\begin{proposition}
\label{prop:filtration}
The subspace $W^\mu$ is contained in $I_n(V_\lambda)$, and in fact is contained in $\displaystyle{\bigoplus _{\nu\preceq \mu}V_\nu}$. The map $\pi_\mu\colon I_n(V_\lambda)\to M^\mu$ restricts to a surjection $\pi_\mu\colon W^\mu\twoheadrightarrow V_\mu$. 
\end{proposition}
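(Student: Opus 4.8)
The plan is to prove the three assertions in order: first that $W^\mu \subseteq I_n(V_\lambda)$, then that $W^\mu \subseteq \bigoplus_{\nu \preceq \mu} V_\nu$, and finally that $\pi_\mu$ maps $W^\mu$ onto $V_\mu$. For the first claim, I would show directly that each generator $w_\T$ lies in the span of the polytabloids $v_T$. The natural approach is to expand $w_\T = \sum_{q \in \ColStab(\T)}(-1)^q\{\widetilde{q\T}\}$ and compare it with the polytabloids $v_T$ for the pseudo-tableaux $T$ arising from $\T$. The key combinatorial observation is that $\ColStab(\T)$ contains $\ColStab(\widetilde{\T})$ as a subgroup (since every column of $Y_\lambda$ is a subset of the corresponding column of $Y_\mu$), so we can write $\ColStab(\T)$ as a disjoint union of cosets $q_j \ColStab(\widetilde{\T})$; grouping the sum over these cosets expresses $w_\T$ as a signed sum $\sum_j (-1)^{q_j} v_{T_j}$ of polytabloids, where $T_j = \widetilde{q_j \T}$ (this is exactly what the displayed examples illustrate). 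This shows $w_\T \in I_n(V_\lambda)$.

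For the second claim, I would use a standard ``straightening'' / dominance argument from the representation theory of $S_n$: the polytabloids $v_T$ satisfy $v_T \in \bigoplus_{\nu \preceq \text{shape}(\cdot)} V_\nu$ after one quotients appropriately, but more precisely, one knows (from the submodule theorem / the construction of Specht modules inside $M^\mu$-type spaces) that $I_n(M^\lambda)$ has a filtration by $S_n$-subrepresentations whose subquotients are the $V_\nu$ with $\lambda \leadsto \nu$, and that the ``top piece'' $\bigoplus_{\nu \preceq \mu} V_\nu$ can be characterized as the kernel of all the maps $\pi_{\mu'}$ for $\mu' \succ \mu$. So I would show that $\pi_{\mu'}(w_\T) = 0$ whenever $\mu' \succ \mu$ and $\lambda \leadsto \mu'$. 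The reason is a counting/cancellation argument: applying $\pi_{\mu'}$ to the pseudo-tabloid $\{\widetilde{q\T}\}$ fills the $n-k$ boxes of $\B' = Y_{\mu'}\setminus Y_\lambda$ in all possible ways; since $\mu' \succ \mu$, some column of $Y_{\mu'}$ is strictly longer than the corresponding column of $Y_\mu$, which forces two of the added labels (for a given $q$) to lie in the same column of $\mu'$, producing a pair of terms related by a column transposition that cancel in the alternating sum over $\ColStab(\T)$. Making this cancellation precise — identifying the right involution on the index set — is where the argument needs care.

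The surjection $\pi_\mu\colon W^\mu \twoheadrightarrow V_\mu$ is the heart of the proposition and I expect it to be the main obstacle. Since $W^\mu \subseteq \bigoplus_{\nu \preceq \mu} V_\nu$ and $\pi_\mu$ kills every $V_\nu$ with $\nu \prec \mu$ (by the same dominance reasoning: $\pi_\mu$ lands in $M^\mu$, and $V_\nu \not\hookrightarrow M^\mu$ for $\nu \prec \mu$), the image $\pi_\mu(W^\mu)$ lies in the $V_\mu$-isotypic part of $M^\mu$, which is just $V_\mu$ (multiplicity one). So it suffices to show $\pi_\mu(W^\mu) \neq 0$, and then $S_n$-equivariance plus irreducibility of $V_\mu$ forces $\pi_\mu(W^\mu) = V_\mu$. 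To show nonvanishing, I would compute $\pi_\mu(w_\T)$ for a cleverly chosen $\T$ — e.g.\ a ``canonical'' tableau where the boxes of $\B$ are placed to make the combinatorics transparent — and exhibit an explicit tabloid of shape $\mu$ appearing with nonzero coefficient. The cleanest route is to compare $\pi_\mu(w_\T)$ with the genuine polytabloid $v_\T \in M^\mu$ (the one built from $\T$ itself, not $\widetilde\T$): I expect that $\pi_\mu(w_\T)$ equals $v_\T$ up to a nonzero scalar (the examples strongly suggest a clean identity of this type, since $\pi_\mu$ reinserts exactly the boxes that $w_\T$ deleted and sums over reorderings, which matches the definition of $v_\T$ after accounting for multiplicities). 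Pinning down that scalar — verifying it is a nonzero multiple of $|\Bij(\B,Y)|$ divided by the appropriate stabilizer order, and in particular that it does not vanish in characteristic zero — is the one calculation I would actually need to carry out carefully; everything else is bookkeeping with cosets and column stabilizers.
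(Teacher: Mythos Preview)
Your overall architecture is exactly the paper's: reduce to the three claims $w_\T\in I_n(V_\lambda)$, $\pi_\mu(w_\T)=c\cdot v_\T$ for some $c\neq 0$, and $\pi_\nu(w_\T)=0$ for $\nu\succ\mu$. Your coset argument for the first claim is correct and matches the paper's identity $c\cdot w_\T=\sum_{s\in\ColStab(\T)}(-1)^s v_{\widetilde{s\T}}$ with $c=|\ColStab(\widetilde{\T})|$. Your guess that $\pi_\mu(w_\T)$ is a nonzero scalar multiple of $v_\T$ is also exactly right, and the paper proves Claims~2 and~3 simultaneously by one cancellation argument.

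The genuine gap is in your heuristic for the cancellation when $\nu\succ\mu$. Your claim that ``some column of $Y_\nu$ is strictly longer than the corresponding column of $Y_\mu$'' is false in general for the lexicographic order (e.g.\ $\mu=(2,2)$, $\nu=(3,1)$), and in any case no two boxes of $\B_\nu$ can lie in the same column since $\lambda\leadsto\nu$ --- so the mechanism you describe cannot work. The paper's involution is different and more delicate. One first rewrites $\pi_\nu(w_\T)$ as a sum over pairs $(q,g)$ with $q\in\ColStab(\T)$ and $g\in\Bij(\B_\mu,\B_\nu)$, where $g$ records how the deleted entries get reinserted. For each ``bad'' $g$ (one not preserving rows; every $g$ is bad when $\nu\neq\mu$), one finds a box $B\in\B_\mu$ whose image $g(B)$ lies in a strictly \emph{higher} row --- this is where $\nu\succeq\mu$ is used --- and lets $C$ be the box in the column of $B$ and the row of $g(B)$. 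Since $B$ and $C$ lie in the same column of $\T$, swapping their entries defines an involution $q\leftrightarrow q'$ on $\ColStab(\T)$ with opposite signs; after applying $g$, the swapped entries sit in boxes $g(B)$ and $C$ which lie in the same row of $Y_\nu$, so the resulting tabloids coincide and the terms cancel. Once you have this involution, the good bijections (only present when $\nu=\mu$) contribute exactly $c\cdot v_\T$.
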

\begin{proof} Fix $\T$ a tableau of shape $\mu$. The proposition will follow from these claims:
\begin{enumerate}[\qquad\text{Claim }1.]
\item $w_\T\in I_n(V_\lambda)$.
\item $\pi_\mu(w_{\T})=c\cdot v_\T$ for a fixed nonzero constant $c\in \Z$.
\item $\pi_\nu(w_{\T})=0$ for any $\nu\succ\mu$ with $\lambda\leadsto\nu$.
\end{enumerate}
Since $W^\mu$ is spanned by $w_\T$, Claim 1 implies that $W^\mu$ is contained in $I_n(V_\lambda)$. Since $V_\mu$ is spanned by $v_\T$, Claim 2 implies that the restriction of $\pi_\mu$ to $W^\mu$ surjects onto $V_\mu$. By Proposition~\ref{prop:branching}, each irreducible $V_\nu$ with $\nu\vdash n$ and $\lambda\leadsto \nu$ occurs in $I_n(V_\lambda)$ with multiplicity 1. Thus to verify that $\displaystyle{W^\mu<\bigoplus_{\nu\preceq \mu}V_\nu}$, it suffices to show that $V_\nu\not<W^\mu$ for all $\nu\succ \mu$. But if $\nu\succ \mu$, consider the map $\pi_\nu\colon I_n(V_\lambda)\to M^\nu$. By Claim 2, the image of $\pi_\nu$ contains $V_\nu$, but Claim 3 implies that $W^\mu$ is contained in the kernel of $\pi_\nu$. It follows that $V_\nu\not<W^\mu$ for all $\nu\succ \mu$, and so $\displaystyle{W^\mu<\bigoplus_{\nu\preceq \mu}V_\nu}$ as desired.

\para{Claim 1} We must show that $w_\T$ can be written as a linear combination of the polytabloids $v_T$. The examples above are designed to illustrate this argument. In fact, we will prove that
\begin{equation}
\label{eq:colstab}
c\cdot w_\T=\sum_{s\in\ColStab(\T)}(-1)^s v_{\widetilde{s\T}}
\end{equation} for an explicit nonzero constant $c\in \Z$. We remark that this sum is redundant: if $s,s'\in\ColStab(\T)$ are such that $\widetilde{s\T}$ is obtained from $\widetilde{s'\T}$ by reordering within columns, we have already noted that $(-1)^sv_{\widetilde{s\T}}=(-1)^{s'} v_{\widetilde{s'\T}}$. Thus we could eliminate the constant $c$ (proving that $w_\T$ is in the $\Z$--span of the $\{v_T\}$, not just the $\Q$--span) by restricting the sum to coset representatives for $\ColStab(\widetilde{\T})$ inside $\ColStab(\T)$. A similar remark applies to the constant in Claim 2.

Note that $q\widetilde{s\T}=\widetilde{qs\T}$ for $q\in\ColStab(\widetilde{s\T})$, so
the right side of \eqref{eq:colstab} is equal to \begin{align*}
\sum_{s\in\ColStab(\T)}(-1)^s v_{\widetilde{s\T}}
&=\sum_{s\in \ColStab(\T)}\sum_{q\in\ColStab(\widetilde{s\T})}(-1)^s(-1)^q \{q\widetilde{s\T}\}\\
&=\sum_{s\in \ColStab(\T)}\sum_{q\in\ColStab(\widetilde{s\T})}(-1)^{qs} \{\widetilde{qs\T}\}.
\end{align*}

Since $\ColStab(\widetilde{s\T})<\ColStab(\T)$ for each $s\in \ColStab(\T)$, we can collect terms and write this as a sum over $p=qs$ of $(-1)^p\{\widetilde{p\T}\}$. It remains to check that each such term occurs the same number of times. Note that the stabilizers $\ColStab(\widetilde{s\T})$ for different $s$ are all conjugate. The constant $c$ will be their cardinality: $c\coloneq \big\vert \ColStab(\widetilde{\T})\big\vert$. Also note that if $p=qs$ and $q\in\ColStab(\widetilde{s\T})$ we have $\ColStab(\widetilde{s\T})=\ColStab(\widetilde{p\T})$. Thus we may rewrite the sum above as
\begin{align*}\sum_{s\in\ColStab(\T)}(-1)^s v_{\widetilde{s\T}}&=\sum_{p\in\ColStab(\T)}\sum_{\substack{s\in \ColStab(\T)\\q\in\ColStab(\widetilde{p\T})\\p=qs}}(-1)^p \{\widetilde{p\T}\}\\&=\big\vert \ColStab(\widetilde{\T})\big\vert\sum_{p\in\ColStab(\T)}(-1)^p\{\widetilde{p\T}\}\\&=c\cdot w_\T,\end{align*} as desired.

\para{Claims 2 and 3} The proofs of the remaining two claims are essentially the same, so we combine them. We remark that in the previous claim we were essentially working with tableaux, and the argument did not depend on passing to equivalence classes of tableaux (that is, to tabloids). In the proof of these claims, however, it will be essential that we deal with tabloids.

Fix $\nu\vdash n$ with $\lambda\leadsto \nu$ and $\nu\succeq \mu$. Let $\T$ be a tableau of shape $\mu$, and consider $\pi_\nu(w_\T)$. Our goal is to show that if $\nu\succ\mu$ then $\pi_\nu(w_\T)=0$, while if $\nu=\mu$ we have $\pi_\mu(w_\T)=v_\T$.

If $\T$ is a tableau of shape $\mu$ and $g\in \Bij(\B_\mu,\B_\nu)$, define $\T_g$ to be the tableau of shape $\nu$ obtained from $\T$ by moving the entry in box $b^\mu_i$ to the box $g(b^\mu_i)$.
The key claim is that \begin{equation}
\label{eq:piwT}
\pi_\nu(w_\T)=\sum_{\substack{q\in\ColStab(\T)\\g\in\Bij(\B_\mu,\B_\nu)}}(-1)^q\big\{(q\T)_g\big\}.
\end{equation} To see this, recall that passing from $\T$ to $w_\T$ amounts to deleting the contents of the boxes $\B_\mu$ (summing over all permutions $q$ within the columns); conversely, passing from $T$ to $\pi_\nu(T)$ amounts to filling in the boxes $B_\nu$ (summing over all ways $f$ of filling them in). Grouped together, the process of deleting the contents of the boxes $B_\mu$, then placing them in the boxes $B_\nu$, is equivalent to just moving the entries in the boxes $B_\mu$ to the boxes $B_\nu$ according to some identification $g$. Since no two boxes from $B_\mu$ or from $B_\nu$ ever occur in the same column (since $\lambda\leadsto \mu$ and $\lambda\leadsto \nu$), the permutation $q\in\ColStab(\T)$ is independent of the bijection $g\in \Bij(B_\mu,B_\nu)$, giving the desired equation \eqref{eq:piwT}.

We say that a bijection $g\in \Bij(\B_\mu,B_\nu)$ is \emph{good} if for each box $b^\mu_i$, its image $g(b^\mu_i)$ lies in the same row as $b^\mu_i$; otherwise we call the bijection $g$ \emph{bad}. Note that \emph{all} bijections are bad if $\nu\neq\mu$, and when $\nu=\mu$ a permutation $g$ is good if and only if the tabloids $\{\T\}$ and $\{\T_g\}$ are equivalent (since $g$ only permutes elements within rows).

The key remaining step is the following, which is carried out below: for each bad bijection $g$ we will divide up the permutations $q\in\ColStab(\T)$ into pairs $q\corr_g q'$ so that $q$ and $q'$ have opposite sign, but $\big\{(q\T)_g\big\}=\big\{(q'\T)_g\big\}$. This implies that the corresponding terms in \eqref{eq:piwT} corresponding to $(q,g)$ and $(q',g)$ will cancel: \[(-1)^q\big\{(q\T)_g\big\}+(-1)^{q'}\big\{(q'\T)_{g'}\big\}=\pm \big(\big\{(q\T)_g\big\}-\big\{(q'\T)_{g'}\big\}\big)=0.\] Once we have constructed such a pairing, we can conclude that all the terms in \eqref{eq:piwT} corresponding to bad bijections will disappear.
If $\nu\neq \mu$, all bijections are bad, so the entire sum in \eqref{eq:piwT} cancels, showing $\pi_\nu(w_\T)=0$ as desired. If $\nu=\mu$, only the good bijections remain; let $c\geq 1$ be the number of good bijections in $\Bij(\B_\mu,\B_\mu)$. If $g$ is good we have $\{(q\T)_g\}=\{q\T\}$, so we conclude that \[\pi_\mu(w_\T)=\sum_{\substack{q\in\ColStab(\T)\\g\text{ good}}}(-1)^q\{(q\T)_g\}=\sum_{q\in\ColStab(\T)}c(-1)^q\{q\T\}=c\cdot v_\T\]
as desired.

It remains only to define the pairing $q\corr_g q'$. Fix a bad bijection $g$, and let $B\in \B_\mu$ be the rightmost box for which $g(b)$ is \emph{higher} than $b$. At least one box changes height (since $g$ is bad), and not all boxes can move lower (since $\nu\succeq \mu$), so some such box exists. Let $C$ be the box which lies in the same column as $B$ and the same row as $g(B)$ (here we use that $g(B)$ is higher than $B$). Note that since $C$ lies in the same column as $B$, we know that $C\not\in \B_\mu$, so $C$ is not moved by the operation $\T\mapsto \T_g$.

Given $q$, the corresponding $q'$ is uniquely defined by the property that $q\T$ and $q'\T$ agree, except that the labels in box $B$ and box $C$ are exchanged. It follows that $(q\T)_g$ and $(q'\T)_g$ agree, except that the labels in box $g(B)$ and box $C$ are exchanged. However, the boxes $g(B)$ and  $C$ lie in the same row, so the associated tabloids $\{(q\T)_g\}$ and $\{(q'\T)_g\}$ are equivalent. Note that $q$ and $q'$ differ by a transposition, so they have opposite signs as desired. Finally, note that $q$ also corresponds to $q'$, so this rule does divide up the permutations in $\ColStab(\T)$ into pairs $q\corr_g q'$. This concludes the proof of Claims 2 and 3, and thus of Proposition~\ref{prop:filtration}.
\end{proof}

For example, if $\mu=\mu_4$ and $\nu=\mu_2$ as in the previous example, we would have:
\begin{align*}q\T&=\young(123,45,6,7)=\young(\cell\cell\cell,\capC\cell,\cell,\capB)\qquad\corr\qquad q'\T=\young(123,75,6,4)=\young(\cell\cell\cell,\capB\cell,\cell,\capC)\\[4pt]
(q&\T)_g=\young(123,457,6)\qquad\qquad\qquad\qquad\quad\ \ \,
(q'\T)_g=\young(123,754,6)
\end{align*}
and these tableaux define the same tabloid: {\tiny$\left\{\young(123,457,6)\right\}=\left\{\young(123,754,6)\right\}$}

\subsection{Monotonicity}
\label{section:endmono}
Recall that if $V$ is an $S_k$--representation, $I_n(V)$ is the $S_n$--representation $\Ind_{S_k\times S_{n-k}}^{S_n}V\boxtimes \Q$. Our goal in this section is to prove Theorem~\ref{theorem:pieri}. By Proposition~\ref{prop:monsurj}, we need only to prove that the sequence $\{I_n(V)\}$ is monotone with respect to the inclusions $\iota_n\colon I_n(V)\to I_{n+1}(V)$.
For any partition $\mu\vdash n$, define $\mu\{n+1\}=(\mu_1+1,\mu_2,\ldots,\mu_\ell)\vdash n+1$. This is just a reindexed version of the notation $\lambda[n]$ used throughout the paper; in particular, $\lambda[n]\{n+1\}=\lambda[n+1]$.

\begin{proof}[Proof of Theorem~\ref{theorem:pieri}.]
Since every representation $V$ is the direct sum of irreducible representations, we may assume that $V$ is irreducible. Our goal is to prove that for fixed $\lambda\vdash k$, the sequence $\{I_n(V_\lambda)\}$ is monotone with respect to the inclusions $I_n(V_\lambda)\hookrightarrow I_{n+1}(V_\lambda)$. By Proposition~\ref{prop:branching}, each irreducible $V_\mu$ with $\mu\vdash n$ and $\lambda\leadsto \mu$ occurs in $I_n(V_\lambda)$ with multiplicity 1. Thus it suffices to show that $S_{n+1}\cdot V_\mu$ contains $V_{\mu\{n+1\}}$. 

Unfortunately, it is quite difficult to identify $V_\mu$ as a subrepresentation of $I_n(V_\lambda)$. But we know that $V_\mu$ is contained in the subrepresentation $W^\mu$, which was explicitly described in the previous section. We first show that $S_{n+1}\cdot W^\mu$ surjects to $V_{\mu\{n+1\}}$.
To do this, fix a tableau $\T$ of shape $\mu$, and let $\T_{\{n+1\}}$ be the tableau of shape $\mu\{n+1\}$ obtained from $\T$ by filling the additional box with $n+1$. Recall that $W^\mu$ is generated by $w_\T$. The key claim is that $\iota_n(w_\T)=w_{\T_{\{n+1\}}}$.

We saw above that $I_n(M^\lambda)$ has basis the $n$--pseudo-tabloids of shape $\lambda$. That is, a basis element is a subset $S\subset \{1,\ldots,n\}$ with $|S|=k$ together with a labeling $T$ of $Y_\lambda$ by the elements of $S$. Then by Criterion~\ref{crit:ind}, a basis for $\Ind_{S_n}^{S_{n+1}}I_n(M^\lambda)$ is given by a subset $S\subset \{1,\ldots,n+1\}$ with $|S|=k$, a labeling $T$ of $Y_\lambda$ by the elements of $S$, and an element $x\in \{1,\ldots,n+1\}$ with $x\not\in S$. The natural inclusion $I_n(M^\lambda)\hookrightarrow \Ind_{S_n}^{S_{n+1}}I_n(M^\lambda)$ takes the pair $(S,T)$ to the triple $(S,T,n+1)$. The projection to $I_{n+1}(M^\lambda)$ is given by forgetting the element $x$, so the composition takes a pair $(S\subset\{1,\ldots,n\},T)$ to the same pair $(S\subset \{1,\ldots,n+1\},T)$. Thus the map $\iota_n\colon I_n(M^\lambda)\to I_{n+1}(M^\lambda)$ simply takes an $n$--pseudo-tabloid of shape $\lambda$ and considers it as an $(n+1)$--pseudo-tabloid.

But the $n$--pseudo-tableau $\widetilde{\T}$ coincides with $\widetilde{\T_{\{n+1\}}}$ when considered as an $(n+1)$--pseudo-tableau, since $\T$ and $\T_{\{n+1\}}$ are identical except for one box, which is deleted when passing from $\T_{\{n+1\}}$ to $\widetilde{\T_{\{n+1\}}}$. Since $\ColStab(\T)=\ColStab(\T_{\{n+1\}})$, the sums defining $w_\T\in I_n(V_\lambda)$ and $w_{\T_{\{n+1\}}}\in I_{n+1}(V_\lambda)$ coincide. This verifies the claim that
 $\iota_n(w_\T)=w_{\T_{\{n+1\}}}$.
We proved in Proposition~\ref{prop:filtration} that $\pi_{\mu\{n+1\}}(w_{\T_{\{n+1\}}})=c\cdot v_{\T_{\{n+1\}}}$. Since $v_{\T_{\{n+1\}}}$ generates $V_{\mu\{n+1\}}$, this shows that $S_{n+1}\cdot W^\mu$  surjects to $V_{\mu\{n+1\}}$ as desired.\\

Of course, $W^\mu$ also contains other irreducible representations $V_\nu$ with $\nu\prec \mu$.\footnote{With a bit of work, we can show that $W^\nu$ is contained in $W^\mu$ if $\nu\prec \mu$ and no box $b^\mu_i\in\B_\mu$ appears to the left of the corresponding box $b^\nu_i\in\B_\nu$, numbering boxes from left to right. I conjecture that $W^\mu$ is the sum of $V_\nu$ for exactly these $\nu$.} However, by the definition \eqref{eq:iotan} of $\iota_n$, the image $S_{n+1}\cdot V_\nu$ factors through $V_\nu\hookrightarrow \Ind_{S_n}^{S_{n+1}}V_\nu$. By the branching rule (Proposition~\ref{prop:branching}), $\Ind_{S_n}^{S_{n+1}}V_\nu$ decomposes as $\bigoplus V_\eta$ over those $\eta\vdash n+1$ with $\nu\leadsto \eta$. It is easy to see that $\nu\{n+1\}$ is lexicographically larger than any other $\eta\vdash n+1$ with $\nu\leadsto \eta$, since $\nu\{n+1\}_1=\nu_1+1$, while $\eta_1=\nu_1$ for any other such $\eta$.
Thus the image $S_{n+1}\cdot V_\nu$ only contains irreducibles $V_\eta$ with $\eta\preceq \nu\{n+1\}$.

Since $\nu\prec\mu$ implies $\nu\{n+1\}\prec \mu\{n+1\}$, we conclude that $S_{n+1}\cdot V_\nu$ contains only irreducibles $V_\eta$ with $\eta\prec \mu\{n+1\}$. It follows that the surjection $S_{n+1}\cdot W^\mu\to V_{\mu\{n+1\}}$ vanishes on the image of each irreducible $V_\nu<W^\mu$ except for $V_\mu$ itself. Thus this surjection restricts to a surjection $S_{n+1}\cdot V_\mu\twoheadrightarrow V_{\mu\{n+1\}}$, verifying that $\{I_n(V_\lambda)\}$ is monotone, as desired.
\end{proof}

\small

\noindent
Dept. of Mathematics\\
University of Chicago\\
5734 University Ave.\\
Chicago, IL 60637\\
E-mail: tchurch@math.uchicago.edu
\medskip

\end{document}